\numberwithin{equation}{section}
\theoremstyle{plain}
\newtheorem{theorem}{Theorem}[section]
\newtheorem{corollary}[theorem]{Corollary}
\newtheorem{lemma}[theorem]{Lemma}
\newtheorem{proposition}[theorem]{Proposition}
\newtheorem{notation}[theorem]{Notation}
\newtheorem{definition}[theorem]{Definition}
\newtheorem{remark}[theorem]{Remark}
\newcommand{\be}{\mathbb E}
\newcommand{\ot}{\otimes}
\newcommand {\id} {{\textrm{id}}}
\newcommand{\wt}{\widetilde}
\newcommand{\wT}{\wt{T}}
\newcommand{\wV}{\wt{V}}
\begin{document}

\title[]{Berger-Coburn-Lebow representation  for pure isometric representations of product system over $\mathbb N^2_0$}

\date{\today}
	\author[Saini]{Dimple Saini\textsuperscript{*}}
\address{Centre for mathematical and Financial Computing, Department of Mathematics, The LNM Institute of Information Technology, Rupa ki Nangal, Post-Sumel, Via-Jamdoli Jaipur-302031, (Rajasthan) India.}
\email{18pmt006@lnmiit.ac.in, dimple92.saini@gmail.com}
\author[Trivedi]{Harsh Trivedi}
\address{Centre for mathematical and Financial Computing, Department of Mathematics, The LNM Institute of Information Technology, Rupa ki Nangal, Post-Sumel, Via-Jamdoli Jaipur-302031, (Rajasthan) India.}
\email{harsh.trivedi@lnmiit.ac.in, trivediharsh26@gmail.com}
\author[Veerabathiran]{Shankar Veerabathiran}
\address{Indian Statistical Institute, Statistics and Mathematics Unit, 8th Mile, Mysore Road, Bangalore, 560059, India }
\email{shankarunom@gmail.com}


\begin{abstract}
		We obtain Berger-Coburn-Lebow (BCL)-representation for pure isometric covariant representation of product system over $\mathbb{N}_0^2$. Then the corresponding complete set of (joint) unitary invariants is studied, and the BCL- representations are compared with other canonical multi-analytic descriptions of the pure isometric covariant representation. We characterize the invariant subspaces for the pure isometric covariant representation. Also, we study the connection between the joint defect operators and Fringe operators, and the Fredholm index is introduced in this case. Finally, we introduce the notion of congruence relation to classify  the isometric covariant representations of the product system over $\mathbb{N}_0^2$.
\end{abstract}

\keywords{Hilbert $C^*$-modules, covariant representations, Wold decomposition, product systems, Fock space, tuples of operators, wandering subspaces,  isometry, Fredholm operators}
\subjclass[2020]{ 46L08 , 47A15 , 47A53 , 47B38  , 47L55.}

\maketitle

\section{Introduction}
The Wold-von Neumann decomposition (cf. \cite{N29, W38}) of an isometry is a fundamental tool in Operator theory. Berger, Coburn, and Lebow \cite{BCL78} considered $C^*$-algebras generated by $n$ commuting isometries and proposed its Fredholm and representation theory. Yang \cite{Y99} explored the BCL index and related it to the Fredholm tuple (cf. \cite{C81}). The classification problem is one of the important issues in the study of pairs of commuting isometries. Bercovici, Douglas, and Foias \cite{BDF06} classified the pairs of commuting isometries by pivotal operators. He, Qin, and Yang \cite{HQY15} introduced a classification using the congruence relation.

In \cite{BCL78}, a finite set of unitary invariants is derived, corresponding to given $n$ commuting isometries, to show that there exist infinitely many $C^*$-algebras generated by pairs of commuting isometric tuples which are non-isomorphic algebras. Bercovici, Douglas, and Foias \cite{BDF06} gave a full classification of the product of $n$ commuting isometries $V=V_1V_2\ldots V_n,$ when $\mbox{dim}(ker V^*)<\infty.$

Weber \cite{W13} used a deformation of the tensor product and generalized commutation relation of two isometries to show the non-exactness of the $C^*$-algebra, which is a tensor twist of the given isometric pair. In \cite{C77} Cuntz studied $C^*$-algebras generated by row isometries. Popescu \cite{P20} explored doubly $\Lambda$-commuting row isometries and its classification program.

Pimsner \cite{P97} generalized the construction of Cuntz algebras from \cite{C77} using isometric covariant representations. Muhly and Solel \cite{MS99} presented the Wold decomposition for an isometric covariant representation. Arveson started the classification program for $E_0$-semigroups using the (tensor) product system of Hilbert spaces in \cite{A89}. The notion of a discrete product system of Hilbert bi-modules is due to Fowler \cite{F02}. Covariant representation of a product system (see \cite{SZ08, S06}) is an active field of study in operator theory and operator algebras. It provides a unified approach for studying operator tuples on Hilbert spaces in a commuting case on the one hand and the non-commuting case on the other hand, and therefore generalize the setup of Popescu \cite{P20} (for comparison, see \cite{TV21}). 

In this paper, we study BCL-representation for a pure isometric covariant representation of a product system over $\mathbb{N}_0^2$ (see Theorems \ref{BCL} and \ref{K2}) and the classification due to Bercovici, Douglas, and Foias (see Theorems \ref{BCT}), and our approach is based on \cite{BDF06, MSS18, Y99, HQY15}. The section-wise plan is as follows: In Section 1, we recall the basic setup and the classical Wold-von Neumann decomposition for an isometric covariant representation. In Section 2, we obtain the BCL-representation for a pure isometric covariant representation of the product system over $\mathbb{N}_0^2$ and study the connection with wandering subspaces.  In  Section 3, we studied a characterization of invariant subspaces for a pure isometric covariant representation of the product system over $\mathbb{N}_0^{2}$. Sections 4 and 5 explore the theory of joint defect operator, Fringe operators, and Fredholm index for an isometric covariant representation of product system over $\mathbb{N}_0^2$. Also, we introduce the notion of the congruence relation to classify the isometric covariant representations, with finite defect, of the product system over $\mathbb{N}_0^2$.  
\subsection{ $\mathbf{Preliminaries\:\: and\:\: Notations}$}
Here we shall recall some basic definitions and results from \cite{P73,L95,MS98}. Let $\mathcal{A}$ be a $C^*$-algebra and $E$ be a Hilbert $C^*$-module over $\mathcal{A}.$ We denote $\mathcal L(E)$ to be the collection of all adjointable operators on $E,$ then $\mathcal L(E)$ is a $C^*$-algebra.
The Hilbert module $E$ is said to be {\it
	$C^*$-correspondence  over $\mathcal{A}$ (or Hilbert $\mathcal{A}$-$\mathcal{A}$-module}) if $E$ has a left $\mathcal
A$-module structure induced by  a non-zero $*$-homomorphism
$\phi:\mathcal A\to \mathcal L(E),$ that is,
\[
a\eta=\phi(a)\eta \quad \quad (a\in\mathcal A, \eta\in E).
\]
In this paper, we assume that each $*$-homomorphism is nondegenerate, that is, the closed linear span of $\phi(\mathcal A)E$ equals $E.$ Recall that the Hilbert module  $E$ comes equipped with the operator space structure that inherits as a subspace of the linking algebra \cite{L95}. If $F$ is an another 	$C^*$-correspondence  over $\mathcal{A},$ we denote the interior tensor product of $F$ and $E$ by $F \otimes_{\phi} E$ (cf. \cite{L95}) that satisfies $$\langle \eta_1 \otimes \xi_1, \eta_2 \otimes \xi_2 \rangle= \langle  \xi_1, \phi(\langle \eta_1, \eta_2\rangle )\xi_2\rangle, \: \eta_{1}, \eta_2 \in F, \xi_1, \xi_2 \in E.$$ Unless it is necessary, we simply write $F \otimes E $ instead of $F \otimes_{\phi} E.$

\begin{definition}
	Suppose that $E$ is a $C^*$-correspondence over $\mathcal{A}$ and $\mathcal H$ is a Hilbert space. Let $V:
	E\to B(\mathcal H)$  be a linear map and $\sigma:\mathcal A\to B(\mathcal H)$  be a representation. The pair $(\sigma, V)$ is
	called a {\rm  covariant representation} (cf. \cite{MS98}) of $E$ on $\mathcal H$  if 
	\[
	V(b\eta a)=\sigma(b)V(\eta)\sigma(a) \quad \quad (
	a,b\in\mathcal A, \eta\in E).
	\] 
	Then $(\sigma, V)$ is {\rm completely bounded covariant representation} (simply say, {\rm c.b.c. representation})  if $V$ is completely bounded.
	Further, $(\sigma, V)$ is called {\rm isometric} if $V(\eta)^*V(\xi)=\sigma(\langle \eta,\xi \rangle)$ for all $\xi,\eta\in E.$
\end{definition}
The following lemma is due to Muhly and Solel which is useful to classify the covariant representation of a $C^*$-correspondence.
\begin{lemma}\cite[Lemma 3.5]{MS98} \label{MSC}
	The map $(\sigma, V)\mapsto \wV$ provides a one-to-one correspondence between the set of all c.b.c. representations $(\sigma, V)$
	and the set of all bounded  linear maps $\widetilde{V}:E\otimes_{\sigma} \mathcal H\to \mathcal
	H$ defined by
	\[
	\widetilde{V}(\eta\otimes h)=V(\eta)h \quad \quad (
	h\in\mathcal H,\eta\in E),
	\] such that $\sigma(a)\widetilde{V}=\widetilde{V}(\phi(a)\otimes I_{\mathcal
		H})$, $a\in\mathcal A$. Moreover, $\wV$ is {\rm isometry} if and only if $(\sigma, V)$ is isometric.
\end{lemma}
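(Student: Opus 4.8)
The plan is to verify the asserted bijection in both directions and then read off the isometry criterion from the Gram structure of the interior tensor product $E \otimes_{\sigma} \mathcal H$. Starting from a c.b.c. representation $(\sigma, V)$, I would define $\wV$ on the algebraic tensor product by $\wV(\eta \otimes h) = V(\eta)h$ and check the two module relations that let it descend to $E \otimes_{\sigma} \mathcal H$. The intertwining identity is immediate from covariance: for $a \in \mathcal A$,
\[
\wV(\phi(a) \otimes I_{\mathcal H})(\eta \otimes h) = \wV(\phi(a)\eta \otimes h) = V(\phi(a)\eta)h = \sigma(a)V(\eta)h = \sigma(a)\wV(\eta \otimes h),
\]
where $V(\phi(a)\eta) = \sigma(a)V(\eta)$ is the left-module part of covariance (obtained by specializing the right action, using nondegeneracy or an approximate unit when $\mathcal A$ is non-unital). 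Injectivity of $(\sigma, V) \mapsto \wV$ is clear, since the values on elementary tensors recover every $V(\eta)$. For the reverse assignment, given a bounded $\wV$ satisfying the intertwining relation I would set $V(\eta)h := \wV(\eta \otimes h)$; the balancing relation $\eta a \otimes h = \eta \otimes \sigma(a)h$ of the interior tensor product gives $V(\eta a) = V(\eta)\sigma(a)$, the intertwining relation gives $V(\phi(b)\eta) = \sigma(b)V(\eta)$, and together these yield $V(b\eta a) = \sigma(b)V(\eta)\sigma(a)$.

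The heart of the matter is the equivalence between complete boundedness of $V$ and boundedness of $\wV$, together with the norm equality $\|\wV\| = \|V\|_{\mathrm{cb}}$. The direction ``$\wV$ bounded $\Rightarrow V$ completely bounded'' is a direct amplification estimate: for $[\eta_{ij}] \in M_n(E)$ and $\underline h = (h_j) \in \mathcal H^n$,
\[
\|V^{(n)}([\eta_{ij}])\underline h\|^2 = \sum_i \Big\| \wV\Big(\sum_j \eta_{ij} \otimes h_j\Big)\Big\|^2 \le \|\wV\|^2 \langle \underline h, \sigma^{(n)}([\eta_{ij}]^*[\eta_{ij}])\underline h\rangle \le \|\wV\|^2\,\|[\eta_{ij}]\|^2\,\|\underline h\|^2,
\]
so $\|V\|_{\mathrm{cb}} \le \|\wV\|$. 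The opposite direction is the main obstacle. Writing $\xi = \sum_{j=1}^n \eta_j \otimes h_j$ and $P = [\langle \eta_i, \eta_j\rangle] \in M_n(\mathcal A)$, one has $\|\wV \xi\|^2 = \langle \underline h, [V(\eta_i)^*V(\eta_j)]\, \underline h\rangle$ and $\|\xi\|^2 = \langle \underline h, \sigma^{(n)}(P)\underline h\rangle$, so both the well-definedness of $\wV$ on the quotient and the bound $\|\wV\| \le \|V\|_{\mathrm{cb}}$ follow at once from the operator inequality
\[
[V(\eta_i)^* V(\eta_j)]_{i,j} \le \|V\|_{\mathrm{cb}}^2\, \sigma^{(n)}\big([\langle \eta_i, \eta_j\rangle]\big) \quad \text{in } M_n(B(\mathcal H)).
\]
This inequality fails for merely bounded $V$, so it must be extracted from complete boundedness: I would use the operator-space structure that $E$ carries as a subspace of its linking algebra \cite{L95}, apply Paulsen's completely positive off-diagonal extension to the covariant pair $(\sigma, V)$, and evaluate the resulting completely positive map on the positive matrix assembled from the $\eta_i$.

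Finally, the isometry criterion is a short Gram computation on elementary tensors. Using $\langle \eta \otimes h, \xi \otimes k\rangle_{E \otimes_{\sigma} \mathcal H} = \langle h, \sigma(\langle \eta, \xi\rangle)k\rangle$ we compute
\[
\langle \wV(\eta \otimes h), \wV(\xi \otimes k)\rangle = \langle h, V(\eta)^* V(\xi)k\rangle \quad \text{and} \quad \langle \eta \otimes h, \xi \otimes k\rangle = \langle h, \sigma(\langle \eta, \xi\rangle)k\rangle,
\]
and since elementary tensors are total, $\wV$ is an isometry if and only if $V(\eta)^* V(\xi) = \sigma(\langle \eta, \xi\rangle)$ for all $\eta, \xi \in E$, which is exactly the condition that $(\sigma, V)$ be isometric. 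I expect the covariance-based well-definedness and the isometry computation to be routine; the genuine work is the Paulsen/linking-algebra argument for the Gram inequality of the preceding paragraph.
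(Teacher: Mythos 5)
First, a remark on the comparison itself: the paper contains no proof of this statement --- Lemma \ref{MSC} is imported verbatim, with citation, from Muhly--Solel \cite[Lemma 3.5]{MS98} --- so your proposal can only be measured against the standard argument, not against anything internal to the paper. Most of your proposal is correct: the descent and intertwining computations, the inverse assignment $V(\eta)h:=\widetilde V(\eta\otimes h)$, the amplification estimate giving $\|V\|_{\mathrm{cb}}\le\|\widetilde V\|$, and the Gram computation for the isometry criterion are all fine, and you correctly isolate the crux, namely the inequality $[V(\eta_i)^*V(\eta_j)]\le\|V\|_{\mathrm{cb}}^2\,\sigma^{(n)}\bigl([\langle\eta_i,\eta_j\rangle]\bigr)$. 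The gap is in your prescription for proving it. Paulsen's off-diagonal technique, in the form you invoke, yields a completely positive map on the Paulsen system, whose diagonal entries are \emph{scalars}; after the Arveson extension, the $(1,1)$ corner becomes some u.c.p.\ map about which nothing is known --- in particular, nothing forces it to restrict to $\sigma$ on $\mathcal{A}$. Since $x_i^*x_j=\begin{pmatrix}\langle\eta_i,\eta_j\rangle&0\\0&0\end{pmatrix}$ lies outside the Paulsen system, the resulting bound dominates $[V(\eta_i)^*V(\eta_j)]$ only by $\|[\langle\eta_i,\eta_j\rangle]\|$ times the identity, which you yourself observe is useless here. What you actually need is a \emph{covariant} (bimodule) off-diagonal extension, i.e.\ complete positivity of the blockwise map on the larger system $\begin{pmatrix}\mathcal{A}&E^*\\E&\mathbb{C}1\end{pmatrix}$ with $\sigma$ on the corner --- and that statement essentially \emph{is} the Gram inequality, so invoking it leaves the real work undone. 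A second, smaller defect: ``evaluating the c.p.\ map on the positive matrix assembled from the $\eta_i$'' only returns a positive matrix; the step you need is the matrix Schwarz inequality, obtained, e.g., by applying the amplified map to $\begin{pmatrix}R^*R&R^*\\R&1\end{pmatrix}\ge0$ with $R=[\eta_1,\dots,\eta_n]$ and using block positivity.

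Both defects disappear if you replace the Paulsen step by the elementary renormalization argument, which is how this lemma is usually proved. Fix $\epsilon>0$, set $G_\epsilon=[\langle\eta_i,\eta_j\rangle]+\epsilon 1_n\in M_n(\widetilde{\mathcal{A}})$, and put $[\zeta_1,\dots,\zeta_n]=[\eta_1,\dots,\eta_n]\,G_\epsilon^{-1/2}$. Then $[\langle\zeta_i,\zeta_j\rangle]=G_\epsilon^{-1/2}[\langle\eta_i,\eta_j\rangle]\,G_\epsilon^{-1/2}\le 1_n$, so the row $[\zeta_1,\dots,\zeta_n]$ has norm at most $1$ in $M_{1,n}(E)$ (with the linking-algebra operator space structure the paper fixes), whence $\|[V(\zeta_1),\dots,V(\zeta_n)]\|\le\|V\|_{\mathrm{cb}}$. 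Right covariance $V(\eta a)=V(\eta)\sigma(a)$ gives $[V(\eta_1),\dots,V(\eta_n)]=[V(\zeta_1),\dots,V(\zeta_n)]\,\sigma^{(n)}(G_\epsilon^{1/2})$, and therefore, writing $\underline{h}=(h_1,\dots,h_n)\in\mathcal{H}^n$,
\[
\Bigl\|\sum_j V(\eta_j)h_j\Bigr\|^2\le\|V\|_{\mathrm{cb}}^2\,\bigl\langle \underline{h},\sigma^{(n)}(G_\epsilon)\underline{h}\bigr\rangle=\|V\|_{\mathrm{cb}}^2\Bigl(\Bigl\|\sum_j\eta_j\otimes h_j\Bigr\|^2+\epsilon\|\underline{h}\|^2\Bigr).
\]
Letting $\epsilon\to 0$ yields simultaneously the well-definedness of $\widetilde V$ on the quotient and $\|\widetilde V\|\le\|V\|_{\mathrm{cb}}$, using nothing beyond row norms and covariance --- no completely positive extension is required. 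With this substitution, the rest of your proposal assembles into a complete and correct proof.
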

The c.b.c. representation $(\sigma, V)$ is called fully co-isometric if $\wV$ is co-isometry, i.e., $\wV\wV^*=I_{\mathcal{H}}.$

Suppose that $E$ is a $C^*$-correspondence over $\mathcal{A}$. Then, for  each $n\in \mathbb{N}_0$,
$E^{\otimes n} =E\otimes_{\phi} \cdots \otimes_{\phi}E$ ($n$-times) (here $E^{\otimes 0} =\mathcal{A}$)  is a $C^*$-correspondence over $\mathcal{A}$ in a natural way, with the left module action  of $\mathcal{A}$ on $E^{\otimes n}$ defined as  $$\phi_n(a)(\xi_1 \otimes \cdots \otimes \xi_n)=a\xi_1\otimes \cdots \otimes\xi_n, \: \:\:\: \xi_i \in E,1\le i\le n.$$
For  $n\in \mathbb{N},$ define $\wV_n : E^{\ot n}\ot \mathcal{H} \to \mathcal{H}$ by 
$$\wV_n (\xi_1 \ot \cdots \ot \xi_n \ot h) = V (\xi_1) \cdots V(\xi_n) h \quad \quad \quad (\xi_i \in E, h \in \mathcal H).$$
The \emph{Fock space} of   $E$ (cf. \cite{F02}), $\mathcal{F}(E)= \bigoplus_{n \geq 0}E^{\otimes n},$ is a $C^*$-correspondence over $\mathcal{A},$ where the left module action  of $\mathcal{A}$ on $\mathcal{F}(E)$ is defined  by $$\phi_{\infty}(a)\left(\oplus_{n \geq 0}\eta_n\right)=\oplus_{n \geq 0}\phi_n(a)\eta_n , \:\: \eta_n \in E^{\otimes n}.$$
For   $\xi \in E,$  the \emph{creation operator} $V_{\xi}$ determined by $\xi$ on $\mathcal{F}(E)$ is defined   by $$V_{\xi}(\eta)=\xi \otimes \eta, \:\: \eta \in E^{\otimes n}, n\ge 0.$$
Note that $\|V_{\xi}\|=\|\xi\|$ for all $\xi \in E.$ 
Suppose that  $\pi $ is a representation of $\mathcal{A}$ on a Hilbert space $\mathcal{K}$. An isometric covariant representation $(\rho, S)$ of $E$ on a Hilbert space  $\mathcal{F}(E)\otimes_{\pi}\mathcal{K}$ defined by
\begin{align*}
	S(\xi)=V_{\xi}\otimes I_{\mathcal{K}} \quad and \quad  \rho(a)=\phi_{\infty}(a) \otimes I_{\mathcal{K}} ,\:\: \xi \in E,a \in \mathcal{A}
\end{align*} is called an {\it induced representation} (cf. \cite{RM74}) induced  by $\pi$. Suppose there exists a unitary operator $U: \mathcal{H} \rightarrow \mathcal{F}(E)\otimes_{\pi} \mathcal{K}$  such that  $U\sigma(a)= \rho(a)U$ and $UV(\xi)=S(\xi)U, a \in \mathcal{A}$ and $\xi \in E,$ that is, $(\sigma, V)$ is isomorphic to $(\rho, S),$ then in this case also we say that $(\sigma, V)$ is an {\it induced representation}.

\begin{definition}
	\begin{enumerate}
		\item Let $(\sigma , V)$  be a c.b.c. representation of  ${E}$ on  $\mathcal{H}.$ A non-zero closed subspace $\mathcal{K}\subseteq \mathcal{H}$ is said to be $(\sigma , V)$-{\rm invariant} $(resp. (\sigma , V)$-{\rm reducing}) if it is $\sigma$-invariant and (resp. both $\mathcal{K},\mathcal{K}^{\perp}$) is invariant by each operator  $V(\xi), \xi \in E.$ The restriction gives a new c.b.c. representation $(\sigma , V)|_{\mathcal{K}}$ of $E$ on $\mathcal{K}.$
		
		\item A closed subspace $\mathcal{W}\subseteq \mathcal{H}$ is a called {\rm wandering subspace} for $(\sigma , V)$ if $\mathcal{W}$ is orthogonal to
		$\wV_n(E^{\ot n}\ot \mathcal{W})$ for all $n\in \mathbb{N}$. We say that the wandering subspace $\mathcal{W}$ is {\rm generating} for $(\sigma , V)$ 
		if  $$\mathcal{H}= \bigvee_{n\in \mathbb{N}_0}\wV_n(E^{\ot n}\ot \mathcal{W}).$$ 
	\end{enumerate}
	
\end{definition}

Next, we recall the Wold-von Neumann decomposition given in \cite[Theorem 2.9]{MS99}. We will denote $I$ for $I_{\mathcal{H}}.$
\begin{theorem}[{\bf Muhly-Solel}]\label{L4}
	Suppose that $(\sigma , V)$ is an isometric covariant representation of $E$  on $\mathcal{H}$. Then  $(\sigma, V)$ uniquely decomposes into a direct sum $(\sigma_s, V_s)\bigoplus(\sigma_u, V_u)$ on $\mathcal{H}=\mathcal{H}_s \bigoplus \mathcal{H}_u$  such that $(\sigma_u, V_u)=(\sigma, V)|_{\mathcal{H}_u}$ is a fully co-isometric covariant representation and $(\sigma_s, V_s)= (\sigma, V)|_{\mathcal{H}_s}$ is an induced representation. Moreover,
	$$\mathcal{H}_s=\bigoplus_{n\geq 0} \wV_n(E^{\ot n}\ot \mathcal{W})\:\: \:\mbox{and}\:\:\: \mathcal{H}_u= \bigcap_{n \geq 1}\wV_n(E^{\otimes  n }\otimes  \mathcal{H}),$$ where $\mathcal{W}$ is a {\it wandering subspace} for $(\sigma, V).$ 
	
\end{theorem}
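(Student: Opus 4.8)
The plan is to imitate the classical Wold--von Neumann decomposition, organised around the isometry $\wV$ attached to $(\sigma,V)$ by Lemma \ref{MSC} and its iterates $\wV_n$. First I would record the structural facts. Since $\wV$ is an isometry satisfying $\sigma(a)\wV=\wV(\phi(a)\ot I)$, its range $M_1:=\wV(E\ot\mathcal H)$ is $\sigma$-reducing, so $\mathcal W:=\mathcal H\ominus M_1=\ker\wV^*$ is $\sigma$-reducing as well. Writing $M_n:=\wV_n(E^{\ot n}\ot\mathcal H)$ and using the factorisation $\wV_n=\wV(\id_E\ot\wV_{n-1})$, an induction shows each $\wV_n$ is an isometry, and the ranges form a decreasing chain $\mathcal H=M_0\supseteq M_1\supseteq\cdots$. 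The first genuine step is a telescoping orthogonal decomposition: because $\mathcal H=M_1\oplus\mathcal W$ is a splitting into $\sigma$-reducing pieces, the interior tensor product respects it, $E^{\ot n}\ot\mathcal H=(E^{\ot n}\ot M_1)\oplus(E^{\ot n}\ot\mathcal W)$, and applying the isometry $\wV_n$ yields $M_n=M_{n+1}\oplus\wV_n(E^{\ot n}\ot\mathcal W)$. Iterating and passing to the limit gives $\mathcal H=\big(\bigoplus_{n\ge0}\wV_n(E^{\ot n}\ot\mathcal W)\big)\oplus\bigcap_{n\ge1}M_n=:\mathcal H_s\oplus\mathcal H_u$, and since $\wV_n(E^{\ot n}\ot\mathcal W)\subseteq M_n\subseteq M_1=\mathcal W^\perp$ for $n\ge1$, the subspace $\mathcal W$ is indeed wandering.

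Next I would verify both summands are $(\sigma,V)$-reducing. Invariance of each $\wV_n(E^{\ot n}\ot\mathcal W)$ under $V(\xi)$ follows from the identity $V(\xi)\wV_n(\eta\ot w)=\wV_{n+1}(\xi\ot\eta\ot w)$; the same identity shows $V(\xi)M_n\subseteq M_{n+1}$, so $\mathcal H_u=\bigcap_nM_n$ is $V(\xi)$-invariant, while $\sigma$-invariance of both pieces is automatic because $\mathcal W$ is $\sigma$-reducing and $\sigma(a)\wV_n=\wV_n(\phi_n(a)\ot I)$. Thus both restrictions $(\sigma_s,V_s)$ and $(\sigma_u,V_u)$ are well-defined covariant representations.

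To identify $(\sigma_s,V_s)$ as an induced representation I would set $\pi:=\sigma|_{\mathcal W}$ and build the unitary $U^*\colon\mathcal F(E)\ot_\pi\mathcal W\to\mathcal H_s$ acting as $\wV_n$ on each summand $E^{\ot n}\ot\mathcal W$; it is a surjective isometry by the telescoping above, and the intertwining relations $V(\xi)U^*=U^*(V_\xi\ot I)$ and $\sigma(a)U^*=U^*(\phi_\infty(a)\ot I)$ are exactly the computation of the previous paragraph. For $(\sigma_u,V_u)$, decomposing $E\ot\mathcal H=(E\ot\mathcal H_s)\oplus(E\ot\mathcal H_u)$ into $\sigma$-reducing pieces gives $M_1=\mathrm{ran}\,\wV_s\oplus\mathrm{ran}\,\wV_u$ with $\mathrm{ran}\,\wV_s\subseteq\mathcal H_s$; hence every $h\in\mathcal H_u\subseteq M_1$ has vanishing $\mathcal H_s$-component and lies in $\mathrm{ran}\,\wV_u$. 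Therefore $\wV_u$ is a surjective isometry, i.e.\ unitary, so $(\sigma_u,V_u)$ is fully co-isometric.

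Finally, for uniqueness, suppose $\mathcal H=\mathcal H_s'\oplus\mathcal H_u'$ is another such splitting. On $\mathcal H_u'$ the restricted representation is both isometric and co-isometric, so $\wV_{u'}$ is unitary, forcing $\mathcal H_u'=\wV_{u',n}(E^{\ot n}\ot\mathcal H_u')\subseteq M_n$ for all $n$ and hence $\mathcal H_u'\subseteq\bigcap_nM_n=\mathcal H_u$. Conversely, reducibility of $\mathcal H_s'$ gives $M_n=\wV_n(E^{\ot n}\ot\mathcal H_s')\oplus\wV_n(E^{\ot n}\ot\mathcal H_u')$, so the $\mathcal H_s'$-component of any vector of $\mathcal H_u$ lies in $\bigcap_n\wV_n(E^{\ot n}\ot\mathcal H_s')$, which is $\{0\}$ by the purity of the induced representation on $\mathcal H_s'$; thus $\mathcal H_u\subseteq\mathcal H_u'$, and $\mathcal H_u'=\mathcal H_u$, $\mathcal H_s'=\mathcal H_s$. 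The only point that is not a verbatim transcription of the scalar argument is the compatibility of the interior tensor product with orthogonal direct sums of $\sigma$-reducing subspaces together with the resulting telescoping, so I expect that module-theoretic bookkeeping, rather than any single hard estimate, to be the crux.
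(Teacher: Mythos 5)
Your proposal is correct, and it is essentially the standard argument: the paper itself gives no proof of Theorem \ref{L4}, which is quoted from Muhly--Solel \cite[Theorem 2.9]{MS99}, and your telescoping decomposition $M_n = M_{n+1}\oplus\wV_n(E^{\ot n}\ot\mathcal{W})$ with $\mathcal{H}_u=\bigcap_n M_n$, together with the identification of $\mathcal{H}_s$ with $\mathcal{F}(E)\ot_\pi\mathcal{W}$ and the purity-based uniqueness argument, is precisely the route taken in that reference. The module-theoretic point you flag (compatibility of the interior tensor product with orthogonal sums of $\sigma$-reducing subspaces) is indeed the only place where the covariant-representation setting differs from the scalar Wold decomposition, and you handle it correctly.
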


Suppose that $(\sigma , V)$ is an isometric covariant representation of $E$  on  $\mathcal{H}$. Then by Theorem \ref{L4}, there exists a unitary $\Pi_V: \mathcal{H} $ $(=\mathcal{H}_s{\oplus}\mathcal{H}_u)\to (\mathcal{F}(E)\otimes\mathcal{W})\oplus\mathcal{H}_u$ such that
\begin{align*}\Pi_V
	\begin{bmatrix}
		V_s(\xi) & 0 \\
		0 & V_u(\xi)
	\end{bmatrix}	= \begin{bmatrix}
		S(\xi) & 0\\
		0 & V_u(\xi) \\
	\end{bmatrix}\Pi_V
	\quad and \quad 
	\Pi_V
	\begin{bmatrix}
		\sigma_s(a) & 0 \\
		0 & \sigma_u(a)
	\end{bmatrix}	= \begin{bmatrix}
		\rho(a) & 0 \\
		0 & \sigma_u(a)
	\end{bmatrix}\Pi_V.
\end{align*}
In fact, $ \Pi_V(\widetilde{V}_{n}(\eta_n\otimes h)\oplus f) = \widetilde{S}_{n}(\eta_n\otimes h)\oplus f, \quad \eta_n \in E^{\otimes n},h \in \mathcal{W},f \in \mathcal{H}_u.$
Moreover, if  $(\sigma , V)$ is pure  (i.e., SOT-$\lim_{n \rightarrow \infty}\wV_{n}\wV^*_{n}=0 $), then $\mathcal{H}_u$ = $\{0\}$  and we have
$$\Pi_V\sigma(a) = \rho(a)\Pi_V \quad , \quad  \Pi_V V(\xi)=S(\xi)\Pi_V.$$

Therefore, an isometric covariant representation $(\sigma,V)$ of $E$ on ${\mathcal H}$ is pure if and only
if it is isomorphic to the induced representation $(\rho, S)$  of $E$ on  $\mathcal{F}(E)\otimes\mathcal{W}$ for some Hilbert space $\mathcal{W}.$
We say that $\Pi_{V}$ is the {\bf Wold-von Neumann decomposition} of the pure isometric covariant representation $(\sigma, V)$ with the  wandering subspace $\mathcal{W}$.

The aim of this paper is to examine the following Berger-Coburn-Lebow representation (see \cite{BDF06,BCL78,MSS18}) for the pure isometric covariant representation of the product system over $\mathbb{N}_0^2$: 

\begin{theorem}[{\bf Berger-Coburn-Lebow}] Suppose that
	$(V_1, V_2)$ is a pair of commuting isometries on $\mathcal{H}$ and $V = V_1 V_2$ is pure. Then there exist a Hilbert space $\mathcal{K},$ an orthogonal projection $P,$ and a unitary map $U$ on $\mathcal{K}$ such that
	\[
	{\Phi_1}(z)= U^* ( z P^{\perp} + P) \quad \text{and} \quad
	{\Phi_2}(z) = (zP + P^{\perp} ) U \quad \quad (z \in \mathbb{D}),
	\]
	are commuting  inner functions   in $H^\infty_{B(\mathcal{K})}(\mathbb{D})$  and  the triple $(M_{\Phi_1}, M_{\Phi_2}, M_z)$ is unitarily equivalent to $(V_1, V_2, V),$ where $H^\infty_{B(\mathcal{K})}(\mathbb{D})$ is  the set of all $B(\mathcal{K})$- valued bounded analytic functions on $\mathbb{D}.$
\end{theorem}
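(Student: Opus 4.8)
The plan is to reduce the statement to the classical structure of one-variable inner functions whose product is the scalar shift, by first modelling the single pure isometry $V$ and then describing $V_1,V_2$ inside the commutant of that model. First I would apply the Wold--von Neumann decomposition to $V=V_1V_2$. Since $V$ is pure, it is unitarily equivalent to the unilateral shift $M_z$ on the vector-valued Hardy space $H^2_{\mathcal K}(\mathbb D)$, where $\mathcal K=\ker V^{*}=\mathcal H\ominus V\mathcal H$ is the wandering subspace of $V$ (this is the scalar, $\mathcal A=\mathbb C$, specialization of Theorem \ref{L4}); let $\Pi\colon\mathcal H\to H^2_{\mathcal K}(\mathbb D)$ be the implementing unitary, so $\Pi V\Pi^{*}=M_z$. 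The reason for passing to this model is that $V_1$ and $V_2$ commute with $V$: indeed $V_1V=V_1V_1V_2=V_1V_2V_1=VV_1$ and likewise $V_2V=VV_2$, using $V_1V_2=V_2V_1$. Hence $\Pi V_1\Pi^{*}$ and $\Pi V_2\Pi^{*}$ lie in the commutant of $M_z$ on $H^2_{\mathcal K}(\mathbb D)$, which is precisely the algebra of analytic Toeplitz operators $\{M_{\Phi}:\Phi\in H^\infty_{B(\mathcal K)}(\mathbb D)\}$. Thus $\Pi V_i\Pi^{*}=M_{\Phi_i}$ for some $\Phi_1,\Phi_2\in H^\infty_{B(\mathcal K)}(\mathbb D)$; since each $V_i$ is an isometry, each $M_{\Phi_i}$ is isometric and so each $\Phi_i$ has isometric boundary values, and from $V_1V_2=V=M_z$ we get $M_{\Phi_1}M_{\Phi_2}=M_z$, i.e. $\Phi_1(z)\Phi_2(z)=zI_{\mathcal K}$ on $\mathbb D$.

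The key step is to show that two such functions whose product is $zI_{\mathcal K}$ must be affine. Passing to radial boundary values, the relation $\Phi_1(\zeta)\Phi_2(\zeta)=\zeta I$ with $\zeta$ invertible shows each $\Phi_i(\zeta)$ is both isometric (innerness) and surjective, hence unitary, for a.e. $\zeta\in\mathbb T$; therefore $\Phi_2(\zeta)=\zeta\,\Phi_1(\zeta)^{*}$ a.e. Writing the analytic functions as operator Fourier series $\Phi_1(\zeta)=\sum_{n\ge0}A_n\zeta^{n}$ and $\Phi_2(\zeta)=\sum_{n\ge0}B_n\zeta^{n}$, the right-hand side $\zeta\,\Phi_1(\zeta)^{*}=\sum_{k\le1}A_{1-k}^{*}\zeta^{k}$ has frequencies in $\{k\le1\}$ while $\Phi_2$ has frequencies in $\{k\ge0\}$; comparing coefficients forces $A_n=0$ for $n\ge2$ and $B_n=0$ for $n\ge2$, together with $B_0=A_1^{*}$ and $B_1=A_0^{*}$. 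Hence $\Phi_1(z)=A_0+A_1z$ and $\Phi_2(z)=A_1^{*}+A_0^{*}z$ are both of degree one.

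Finally I would extract the projection and the unitary. Matching Fourier coefficients in the identities $\Phi_i(\zeta)^{*}\Phi_i(\zeta)\equiv I$ and $\Phi_i(\zeta)\Phi_i(\zeta)^{*}\equiv I$ yields $A_0^{*}A_0+A_1^{*}A_1=I$, $A_0A_0^{*}+A_1A_1^{*}=I$, and the orthogonality relations $A_0^{*}A_1=A_1^{*}A_0=0$ and $A_0A_1^{*}=A_1A_0^{*}=0$. Setting $U^{*}:=A_0+A_1$, these give $U^{*}(U^{*})^{*}=(U^{*})^{*}U^{*}=I$, so $U$ is unitary; setting $P:=A_0^{*}A_0$, the same relations give $A_0=U^{*}P$ and $A_1=U^{*}P^{\perp}$, whence $P=A_0^{*}A_0=PUU^{*}P=P^{2}$ shows $P$ is an orthogonal projection. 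Substituting back gives $\Phi_1(z)=U^{*}(zP^{\perp}+P)$ and $\Phi_2(z)=(zP+P^{\perp})U$, which one checks directly are commuting inner functions with product $zI$; transporting through $\Pi$ then shows $(M_{\Phi_1},M_{\Phi_2},M_z)$ is unitarily equivalent to $(V_1,V_2,V)$.

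I expect the main obstacle to be the boundary Fourier-coefficient argument of the second paragraph: one must justify the a.e. unitarity of the boundary values (which comes only from combining innerness with the product relation) and the termwise comparison of operator-valued Fourier expansions, since this is exactly what collapses a priori infinite-degree inner functions to affine ones. Once linearity is established, the production of $U$ and $P$ is a routine manipulation of the six coefficient relations.
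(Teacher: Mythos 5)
Your proof is correct, but it follows a genuinely different route from the one this paper takes. The paper never proves the classical statement directly---it quotes it from \cite{BCL78,BDF06,MSS18}---and its own argument is the generalized version in Theorems \ref{BCL} and \ref{K2}, which recovers this statement in the special case $\mathcal{A}=\mathbb{C}$, $E_1=E_2=\mathbb{C}$. That argument is purely geometric: the Wold--von Neumann decomposition (Theorem \ref{L4}) identifies $V$ with the shift on $\mathcal{F}(E)\otimes\mathcal{W}$, Lemma \ref{L_1} and Theorem \ref{L6} identify $V_1,V_2$ with multipliers $M_{\Theta_1},M_{\Theta_2}$, and the projection $P$ and unitary $U$ are then built directly from the two orthogonal splittings of the wandering subspace in (\ref{WWW}), namely $\mathcal{W}=\mathcal{W}_1\oplus \widetilde{V}^{(1)}(E_1\otimes\mathcal{W}_2)=\widetilde{V}^{(2)}(E_2\otimes\mathcal{W}_1)\oplus\mathcal{W}_2$; no function theory appears anywhere. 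You instead work in the Hardy-space model: the commutant of $M_z$ on $H^2_{\mathcal{K}}(\mathbb{D})$ is the analytic Toeplitz algebra, innerness plus $\Phi_1\Phi_2=zI$ forces a.e.\ unitary boundary values, the frequency comparison in $\Phi_2(\zeta)=\zeta\Phi_1(\zeta)^*$ collapses both symbols to affine polynomials, and $U$, $P$ are read off from the coefficient relations. Both arguments are sound. Yours is shorter and self-contained for the classical case, but it leans on Hardy-space facts (a.e.\ radial limits---strictly speaking one wants $\mathcal{K}$ separable here, or one can avoid boundary values altogether by using the operator identity $M_{\Phi_2}=M_{\Phi_1}^*M_z$ and comparing Taylor coefficients---plus the commutant theorem) that have no counterpart over general $C^*$-correspondences; the paper's wandering-subspace method is precisely what survives the passage to product systems over $\mathbb{N}_0^2$, which is the point of the paper. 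Two small repairs to your last step: the chain $P=A_0^*A_0=PUU^*P=P^2$ presupposes $A_0=U^*P$, which you have not yet proved at that point; instead show idempotence directly, $P^2=A_0^*(I-A_1A_1^*)A_0=A_0^*A_0=P$ using $A_1^*A_0=0$, and then verify $U^*P=A_0A_0^*A_0=A_0$ and $U^*P^\perp=A_1$. Finally, note the dictionary between the two proofs: your $A_0=U^*P$ and $A_1=U^*P^\perp$ correspond exactly to the paper's two summands $\mathcal{W}_1$ and $\widetilde{V}^{(1)}(E_1\otimes\mathcal{W}_2)$ of $\mathcal{W}$ in (\ref{WWW}).
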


\section{BCL-representation for a pure isometric covariant representation of a product system over $\mathbb{N}_0^2$}

The classification program for commuting $n$-isometries was proposed and analyzed first by Berger, Coburn, and Lebow \cite{BCL78}. Later on,  Bercovici, Douglas, and Foias \cite{BDF06} gave a complete classification for the commuting $n$-isometries under the assumption that the joint wandering subspace has a finite dimension. Popovici \cite{P04} obtained a Wold-type decomposition for a pair of commuting isometries on $\mathcal{H}$. Skalski and Zacharias \cite{SZ08} discussed the Popovici-Wold-type decomposition for an isometric covariant representation of product system $\mathbb{E}$ over $\mathbb{N}_0^n.$ In \cite{MSS18}, an explicit version of the BCL-representation in terms of multipliers with exact coefficients is given. In this section, we will study BCL-representation for pure isometric covariant representation of the product system over $\mathbb{N}_0^2$ and describe a complete set of (joint) unitary invariants based on \cite{MSS18}.

In this paper, we use $\mathbb N_0 =\{0\}\cup \mathbb N.$  A \emph{product system} $\be$ over $\mathbb N_0^2$  is a family of $C^*$-correspondences  $\{E_1,E_2\}$ with unitary isomorphism $t_{2,1}: E_2 \ot E_1 \to E_1 \ot E_2.$   Now, define  $t_{i,i} = \id_{E_i \ot E_i}, i=1,2 $ and $ t_{1,2} = t_{2,1}^{-1}.$  Therefore,   for all
${\bf{n}}=(n_1, n_2) \in \mathbb N^2_0$ the $C^*$- correspondence $\be({\bf{n}})$  is identified with  $E_1^{\ot^{ n_1}} \ot E_2^{\ot^{n_2}}$ (for more details see \cite{F02, S06, S08}).

\begin{definition}
	Suppose that $\be$ is a product system over $\mathbb N^2_0$ and $\mathcal{H}$ is a Hilbert space.  Let $V^{(i)}:
	E_i\to B(\mathcal H)$, $1 \leq i \leq 2$ be the linear maps and $\sigma:\mathcal A\to B(\mathcal H)$  be a representation. The triple $(\sigma, V^{(1)}, V^{(2)})$ is called {\rm completely bounded covariant representation} (simply say, {\rm c.b.c. representation}) of $\be$ on a Hilbert space $\mathcal{H},$ if each tuple $(\sigma, V^{(i)})$ is a c.b.c. representation of $E_i$ on $\mathcal{H}$ and satisfy the commutative relation
	\begin{equation} \label{rep} \wV^{(i)} (I_{E_i} \ot \wV^{(j)}) = \wV^{(j)} (I_{E_j} \ot \wV^{(i)}) (t_{i,j} \ot I_{\mathcal H}) \quad	\mbox{for} \quad 1 \le i,j\le 2.
	\end{equation}
	Moreover, the covariant  representation  $(\sigma, V^{(1)}, V^{(2)})$ is said to be {\rm isometric (resp. fully co-isometric)} if each pair $(\sigma, V^{(i)})$ is isometric {(resp. fully co-isometric)}.
\end{definition}

Let  $n \in \mathbb{N}$ and $1 \leq  i \leq 2,$  define $\wV^{(i)}_n: E_i^{\ot n} \ot_{\sigma} \mathcal H\to \mathcal H$ by the formula
$$\wV^{(i)}_n (\xi_1 \ot \cdots \ot \xi_n \ot h) = V^{(i)} (\xi_1) \cdots V^{(i)}(\xi_n) h, \quad  \quad (\xi_1, \dots , \xi_n \in E_{i}, h \in \mathcal H ).$$

For ${\bf{n}}=(n_1, n_2) \in \mathbb{N}_0^2,$ we use notation $\wV_{\bf{n}},$ where $\wV_{\bf{n}}: \mathbb{E}(\bf{n}) \ot \mathcal{H} \rightarrow \mathcal{H}$  is given by 
$$\wV_{\bf{n}}=\wV^{(1)}_{n_1}(I_{E_1^{\ot n_1}} \ot \wV^{(2)}_{n_2}).$$
Using Lemma \ref{MSC}, let us define the bi-module map $V_{\bf{n}}: \mathbb{E}({\bf{n}}) \rightarrow B(\mathcal{H})$ by $$V_{\bf{n}}(\xi)h=\wV_{\bf{n}}(\xi \ot h), \:\:\: h \in \mathcal{H},\xi \in \mathbb{E}({\bf{n}}), {\bf{n}} \in \mathbb{N}_0^2.$$

Define the \emph{Fock space} $\mathcal{F}(\mathbb{E})$ of $\mathbb{E}$ over $\mathbb{N}^2_0$ by
$$\mathcal{F}(\mathbb{E})=\bigoplus_{{\bf n} \in \mathbb{N}^2_0}\mathbb{E}({\bf n}).$$

Then $\mathcal{F}(\mathbb{E})$ is a $C^*$-correspondence over  $\mathcal{A},$ with the left module action $\phi_{\infty}$ given by $\phi_{\infty}(a)(\oplus_{{\bf{n}} \in \mathbb{N}_0^2} \xi_{\bf{n}}) = \oplus_{{\bf{n}} \in \mathbb{N}_0^2} \phi_{\bf{n}}(a)\xi_{\bf{n}}, \: a\in \mathcal{A}$ and $\xi_{\bf{n}} \in \mathbb{E}(\bf{n})$ and $\phi_{\bf{n}}$ is the natural left module action of $\mathcal{A}$ on $\mathbb{E}(\bf{n}).$

Let $\mathcal{K}$ be a Hilbert space and $\pi$  be a  representation of $\mathcal{A}$ on  $\mathcal{K}.$  For $1 \leq i \leq 2,$ define an isometric covariant representation $(\rho, S^{(i)})$ of $E_i$ on $\mathcal{F}(\mathbb{E}) \otimes_{\pi} \mathcal{K}$ by \begin{align*}
	\rho(a)=\phi_{\infty}(a) \otimes I_{\mathcal{K}} \quad and \quad
	S^{(i)}(\xi_i)=V_{{\xi}_i} \otimes I_{\mathcal{K}},\:\:\: a \in \mathcal{A}\:, \xi_i \in E_i,
\end{align*}
where $V_{\xi_i}$ is the creation operator determined by $\xi_i$ on $\mathcal{F}(\mathbb{E}).$ The above covariant representation $(\rho, S^{(1)}, S^{(2)})$ is called an {\it induced representation} of $\mathbb{E}$ induced by $\pi.$ Any covariant representation of $\mathbb{E}$ which is isomorphic to the induced representation  $(\rho, S^{(1)}, S^{(2)})$ is also called an {induced representation}.
More generally, we say two such a covariant representations $(\sigma , V^{(1)},V^{(2)})$ and $(\sigma' , V^{(1)'},V^{(2)'})$ of $\mathbb{E}$ on the Hilbert spaces $\mathcal{H}$ and $\mathcal{H}',$ respectively, are {\it isomorphic} (cf. \cite{SZ08})  if there exists a unitary $U:\mathcal H \to
\mathcal H'$ such that   $U\sigma(a)=\sigma'(a)U$ and $V^{(i)'}(\xi_i)U = U V^{(i)} (\xi_i),  a\in \mathcal{A},\xi_i \in E_i, 1 \leq i\leq 2.$

Suppose that $\mathbb{E}$ is a product system over $\mathbb{N}_0^2$. Let $\Theta: E_2 \longrightarrow B(\mathcal{K}, \mathcal{F}(E_1) \otimes_{\pi} \mathcal{K})$ be a completely bounded {\it bi-module map}, that is, $\Theta$ is   completely bounded  and $\Theta(a \xi b)=\rho(a)\Theta(\xi)\rho(b),$ where $\xi \in E_2, a,b \in \mathcal{A}.$ Define a bounded linear map $\widetilde{\Theta}: E_2 \otimes \mathcal{K} \longrightarrow \mathcal{F}(E_1) \otimes_{\pi} \mathcal{K}$  by $\widetilde{\Theta}(\xi \otimes h)=\Theta(\xi)h$ for all $\xi\in E_2,h\in \mathcal{K},$ and it satisfies $\widetilde{\Theta}(\phi_2(a) \ot I_{ \mathcal{K}})=\rho(a)\widetilde{\Theta},$ where $\phi_2$ is the left action of $\mathcal{A}$ on $E_2$ and $a \in \mathcal{A}.$ 
We define a corresponding completely bounded bi-module map $M_{\Theta} :E_2  \longrightarrow B(\mathcal{F}(E_1) \otimes_{\pi} \mathcal{K})$ by
\begin{align*}
	M_{\Theta}(\xi) (S_n^{(1)}(\xi_n) h)=\widetilde{S}_n^{(1)}(I_{E_1^{\otimes n}} \otimes \widetilde{\Theta})(t_{2,1}^{(1,n)} \otimes I_{\mathcal{K}})(\xi \otimes \xi_n \otimes h),
\end{align*}
where  $\xi \in E_2, \xi_n \in E_1^{\otimes n}, h\in \mathcal{K}, n \in \mathbb{N}_0$ and $t_{2,1}^{(1,n)}: E_2 \ot E_1^{\ot n}  \rightarrow E_1^{\ot n} \ot E_2 $ is an isomorphism which is a composition of the isomorphisms $\{t_{i,j}\: :\: 1 \leq i, j \leq 2\}.$
Clearly  $M_{\Theta}(\xi)|_{\mathcal{K}}=\Theta(\xi)$ for each $\xi \in E_2,$  $(\rho,{M}_{\Theta})$ is a c.b.c. representation of $E_2$ on $\mathcal{F}(E_1) \otimes_{\pi} \mathcal{K},$ and it satisfies
\begin{align}\label{I_1}
	M_{\Theta}(\xi)\left(\bigoplus_{n \in \mathbb{N}_0}\xi_n \otimes h_n\right)=\sum_{n \in \mathbb{N}_0}\widetilde{S}_n^{(1)}(I_{E_1^{\otimes n}} \otimes \widetilde{\Theta})(t_{2,1}^{(1,n)} \otimes I_{\mathcal{K}})(\xi \otimes \xi_n \otimes h_n),
\end{align} 
where $\xi \in E_2, \xi_n \in E_1^{\otimes n}, h_n \in \mathcal{K}.$  Let  $(\rho, S)$ be the induced representation of $E_1$ induced by $\pi,$ then it is easy to see that
$\widetilde{M}_{\Theta} (I_{E_2} \otimes \widetilde{S})=\widetilde{S} (I_{{E}_1} \otimes \widetilde{M}_{\Theta})(t_{2,1} \otimes I_{\mathcal{F}(E_1) \otimes \mathcal{K}}).$ That is, $(\rho, S, M_{\Theta})$ is a c.b.c. representation of $\mathbb{E}$ on $\mathcal{F}(E_1) \otimes _{\pi} \mathcal{K} .$
Also, observe  that $\widetilde{\Theta}$  is an isometry if and only if   $(\rho, M_{\Theta})$ is an isometric covariant representation. In this section,  we only focus on $\Theta$ such that $(\rho,M_{\Theta})$ is a c.b.c. representation.

\begin{lemma}\label{L_1}
	Suppose that $\mathbb{E}$ is a product system over $\mathbb{N}_0^2$. Let  $(\rho ,S)$ be the induced representation of $E_1$  induced by $\pi$ and let  $(\rho ,V)$ be a c.b.c. representation of $E_2$ on $\mathcal{F}(E_1) \otimes _{\pi} \mathcal{K}.$  Then $(\rho, S, V)$ is a c.b.c. representation of $\mathbb{E}$ on $\mathcal{F}(E_1) \otimes _{\pi} \mathcal{K}$ if and only if there exists a completely bounded bi-module map  $\Theta: E_2 \longrightarrow  B(\mathcal{K}, \mathcal{F}(E_1) \otimes _{\pi} \mathcal{K}) $ such that $V=M_{\Theta}.$
\end{lemma}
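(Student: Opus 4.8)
The plan is to prove the two implications separately, with the reverse implication being essentially a restatement of the construction preceding the lemma, and the forward implication carrying the real content. For the direction ($\Leftarrow$), suppose $V = M_{\Theta}$ for some completely bounded bi-module map $\Theta$. Then the discussion immediately before the lemma already shows that $(\rho, M_{\Theta})$ is a c.b.c. representation of $E_2$ and that the commutation relation $\widetilde{M}_{\Theta}(I_{E_2} \otimes \widetilde{S}) = \widetilde{S}(I_{E_1} \otimes \widetilde{M}_{\Theta})(t_{2,1} \otimes I)$ holds, so $(\rho, S, M_{\Theta})$ is a c.b.c. representation of $\mathbb{E}$. Hence nothing further is needed here.

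For the forward direction ($\Rightarrow$), I would first produce the candidate $\Theta$ by compression to the scalar level. Identify $\mathcal{K}$ with the $n=0$ summand $E_1^{\otimes 0}\otimes_{\pi}\mathcal{K} = \mathcal{A}\otimes_{\pi}\mathcal{K}\cong\mathcal{K}$ of $\mathcal{F}(E_1)\otimes_{\pi}\mathcal{K}$, where the isometric identification $a\otimes h\mapsto\pi(a)h$ is surjective by nondegeneracy of $\pi$, and define $\Theta(\xi) := V(\xi)|_{\mathcal{K}} \in B(\mathcal{K}, \mathcal{F}(E_1)\otimes_{\pi}\mathcal{K})$ for $\xi\in E_2$. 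Complete boundedness of $\Theta$ is inherited from that of $V$, since restriction to a subspace is completely contractive. The bi-module identity $\Theta(a\xi b) = \rho(a)\Theta(\xi)\rho(b)$ follows from the covariance $V(a\xi b) = \rho(a)V(\xi)\rho(b)$ together with the observation that $\rho(b) = \phi_{\infty}(b)\otimes I_{\mathcal{K}}$ preserves the scalar summand $\mathcal{K}$ (acting there as $\pi(b)$); restricting the covariance identity to $\mathcal{K}$ then yields the claim.

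The heart of the matter is to show $V = M_{\Theta}$. Since both operators belong to c.b.c. representations of $\mathbb{E}$ sharing the same first component $S$, I would argue that the relation \eqref{rep}, which for the pair $(S, V)$ reads $\widetilde{V}(I_{E_2}\otimes\widetilde{S}) = \widetilde{S}(I_{E_1}\otimes\widetilde{V})(t_{2,1}\otimes I)$, forces the value of $V(\xi)$ on any vector $\widetilde{S}_n^{(1)}(\xi_n\otimes h)$ to be determined by its restriction to $\mathcal{K}$. As vectors of this form are total in $\mathcal{F}(E_1)\otimes_{\pi}\mathcal{K}$, it suffices to prove, by induction on $n$, that
\begin{align*}
V(\xi)\,\widetilde{S}_n^{(1)}(\xi_n\otimes h) = \widetilde{S}_n^{(1)}(I_{E_1^{\otimes n}}\otimes\widetilde{\Theta})(t_{2,1}^{(1,n)}\otimes I_{\mathcal{K}})(\xi\otimes\xi_n\otimes h),
\end{align*}
which is exactly the defining formula of $M_{\Theta}$. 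The base case $n=0$ is precisely $V(\xi)|_{\mathcal{K}} = \Theta(\xi)$. For the inductive step I would use the commutation relation to push $V(\xi)$ past a single creation operator, thereby lowering the Fock level by one while braiding $\xi$ through the first tensor factor via $t_{2,1}$, and then invoke the induction hypothesis.

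I expect the main obstacle to be the bookkeeping in this inductive braiding step: one must check that iterating the elementary relation $\widetilde{V}(I_{E_2}\otimes\widetilde{S}) = \widetilde{S}(I_{E_1}\otimes\widetilde{V})(t_{2,1}\otimes I)$ produces precisely the composite twist $t_{2,1}^{(1,n)}: E_2\otimes E_1^{\otimes n}\to E_1^{\otimes n}\otimes E_2$ and not some other composition of the $t_{i,j}$, and that the associativity and coherence of the interior tensor products are respected at each stage. Once the identity is established on decomposable vectors $\xi_n = \xi_1\otimes\cdots\otimes\xi_n$, linearity together with boundedness of both sides and density of finite sums of such vectors extends it to all of $\mathcal{F}(E_1)\otimes_{\pi}\mathcal{K}$, giving $V = M_{\Theta}$ and completing the proof.
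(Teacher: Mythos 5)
Your proposal is correct and follows essentially the same route as the paper: the converse is delegated to the construction of $M_{\Theta}$ preceding the lemma, and for the forward direction you define $\Theta(\xi)=V(\xi)|_{\mathcal{K}}$ and show $V(\xi)$ agrees with $M_{\Theta}(\xi)$ on each Fock level $\widetilde{S}_n(E_1^{\otimes n}\otimes\mathcal{K})$, exactly as the paper does. The only cosmetic difference is that the paper simply asserts the iterated commutation relation $\widetilde{V}(I_{E_2}\otimes\widetilde{S}_n)=\widetilde{S}_n(I_{E_1^{\otimes n}}\otimes\widetilde{V})(t_{2,1}^{(1,n)}\otimes I)$ (its Equation (2.3)) while you propose to establish it by induction on $n$ with the coherence of the twists $t_{2,1}$ made explicit.
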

\begin{proof}
	Suppose that  $(\rho, S, V)$  is a c.b.c. representation of $\mathbb{E}$ on $\mathcal{F}(E_1) \otimes \mathcal{K}.$ Then, for $n \in \mathbb{N}$
	\begin{align}\label{I_2}
		\widetilde{V} (I_{E_2} \otimes \widetilde{S}_{n})=\widetilde{S}_{n} (I_{{E}_1^{\otimes n}} \otimes \widetilde{V})(t_{2,1}^{(1,n)} \otimes I_{\mathcal{F}(E_1) \otimes \mathcal{K}}).
	\end{align}
	Define $\Theta: E_2 \longrightarrow {B}(\mathcal{K}, \mathcal{F}(E_1)\otimes_{\pi} \mathcal{K}) $ by $\Theta(\xi)=V(\xi)|_{\mathcal{K}}, \:\xi \in E_2.$ By Equation (\ref{I_1}), for each $\xi \in E_2$, $V(\xi)$ is uniquely determined by $\Theta(\xi).$
	Indeed, since $ \mathcal{F}(E_1) \otimes _{\pi} \mathcal{K}= \bigoplus_{n \in \mathbb{N}_0}\wt{S}_n(E_1^{\otimes n }\otimes \mathcal{K})$ and  for each $\xi \in E_2, \xi_n \in E_1^{\otimes n}, h \in \mathcal{K},$ we have
	\begin{align*} 
		V(\xi)S_n(\xi_n)h&=\widetilde{V}(\xi \otimes S_n(\xi_n)h)=\widetilde{S}_n(I_{E_1^{\otimes n}} \otimes \widetilde{V})(t_{2,1}^{(1,n)} \otimes I_{\mathcal{K}})(\xi \otimes \xi_n \otimes h)
		\\&=\widetilde{S}_n(I_{E_1^{\otimes n}} \otimes \widetilde{\Theta})(t_{2,1}^{(1,n)} \otimes I_{\mathcal{K}})(\xi \otimes \xi_n \otimes h)
		=M_{\Theta}(\xi)  S_n(\xi_n) h.
	\end{align*}
	It follows that $V$ is uniquely determined by $\Theta$ and hence   $V=M_{{\Theta}}.$ The converse part follows from Equation (\ref{I_1}).
\end{proof}

\begin{theorem}\label{L6}
	Suppose that $\mathbb{E}$ is a product system over $\mathbb{N}_0^2$. Let  $(\sigma , V^{(1)})$  be  a  pure isometric covariant representation of $E_1$ on  $\mathcal{H}$ and  $\Pi_{V^{(1)}}$ be the Wold-von Neumann decomposition  of  $(\sigma, V^{(1)})$ with the wandering subspace $\mathcal{W}_1$. Let $(\sigma, V^{(2)})$ be  an  isometric covariant representation of $E_2$ on  $\mathcal{H}.$ Define an isometric covariant representation $(\rho, T)$ of $E_2$ on  $\mathcal{F}(E_1) \otimes_{\pi} \mathcal{W}_1$ by  $$ T(\xi)=\Pi_{V^{(1)}} V^{(2)}(\xi) \Pi_{V^{(1)}}^*,$$ where $ \pi= \sigma|_{\mathcal{W}_1}, \rho$ is defined as above and $ \xi \in E_2.$
	Then  $(\sigma , V^{(1)},V^{(2)})$  is an isometric covariant representation of $\mathbb{E}$ on  $\mathcal{H}$   if and only if there exists  an isometric bi-module map $\Theta : E_2 \rightarrow {B}(\mathcal{W}_1, \mathcal{F}(E_1)\otimes_{\pi} \mathcal{W}_1)$  (i.e., $\widetilde{\Theta}$ is an isometry) such that $T=M_{\Theta}.$ Moreover,  $\Theta(\xi)=\sum_{n \in \mathbb{N}_0}\widetilde{S}_{n}(I_{E_1^{\otimes n}} \otimes P_{\mathcal{W}_1})\widetilde{V}_{n}^{(1)^*}V^{(2)}(\xi), ~\xi \in E_2$ and  $P_{\mathcal{W}_1}$ is an orthogonal projection of $\mathcal{H}$ onto $\mathcal{W}_1.$
\end{theorem}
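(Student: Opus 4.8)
The plan is to transport the whole triple through the Wold--von Neumann unitary $\Pi_{V^{(1)}}$ and then invoke Lemma \ref{L_1}. Since $(\sigma , V^{(1)})$ is pure, $\Pi_{V^{(1)}}:\mathcal H\to \mathcal F(E_1)\otimes_\pi\mathcal W_1$ is a unitary satisfying $\Pi_{V^{(1)}}\sigma(a)=\rho(a)\Pi_{V^{(1)}}$ and $\Pi_{V^{(1)}}V^{(1)}(\xi)=S(\xi)\Pi_{V^{(1)}}$, where $(\rho ,S)$ is the induced representation of $E_1$. Because $T$ is defined by $T(\xi)=\Pi_{V^{(1)}}V^{(2)}(\xi)\Pi_{V^{(1)}}^*$ and $\Pi_{V^{(1)}}$ is unitary, the triple $(\sigma ,V^{(1)},V^{(2)})$ is carried by $\Pi_{V^{(1)}}$ onto $(\rho ,S,T)$; in particular $(\sigma ,V^{(1)},V^{(2)})$ is an isometric covariant representation of $\mathbb E$ on $\mathcal H$ if and only if $(\rho ,S,T)$ is one on $\mathcal F(E_1)\otimes_\pi\mathcal W_1$. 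Here $(\rho ,S)$ and $(\rho ,T)$ are automatically isometric (the first by construction, the second because $V^{(2)}$ is isometric and $\Pi_{V^{(1)}}$ is unitary), so the entire content sits in the commutation relation \eqref{rep} for the pair $(S,T)$.

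First I would verify that \eqref{rep} is preserved under the transport. Writing $\widetilde T=\Pi_{V^{(1)}}\widetilde V^{(2)}(I_{E_2}\otimes \Pi_{V^{(1)}}^*)$ and $\widetilde S=\Pi_{V^{(1)}}\widetilde V^{(1)}(I_{E_1}\otimes \Pi_{V^{(1)}}^*)$, and using the adjoint relations $\Pi_{V^{(1)}}^*\widetilde S=\widetilde V^{(1)}(I_{E_1}\otimes \Pi_{V^{(1)}}^*)$ and $\Pi_{V^{(1)}}^*\widetilde T=\widetilde V^{(2)}(I_{E_2}\otimes \Pi_{V^{(1)}}^*)$, a direct substitution into $\widetilde T(I_{E_2}\otimes \widetilde S)$ and $\widetilde S(I_{E_1}\otimes \widetilde T)(t_{2,1}\otimes I)$ — noting that $\Pi_{V^{(1)}}^*$ acts on the Hilbert-space leg while $t_{2,1}$ acts on the correspondence legs, so the two commute — shows that the identity for $(\rho ,S,T)$ holds if and only if $\widetilde V^{(2)}(I_{E_2}\otimes \widetilde V^{(1)})=\widetilde V^{(1)}(I_{E_1}\otimes \widetilde V^{(2)})(t_{2,1}\otimes I_{\mathcal H})$, i.e.\ exactly \eqref{rep} for $(V^{(1)},V^{(2)})$. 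Hence $(\rho ,S,T)$ is a c.b.c.\ representation of $\mathbb E$ precisely when $(\sigma ,V^{(1)},V^{(2)})$ is. Now Lemma \ref{L_1} (with $\mathcal K=\mathcal W_1$, $V=T$) applies: $(\rho ,S,T)$ is a c.b.c.\ representation of $\mathbb E$ if and only if there is a completely bounded bi-module map $\Theta$ with $T=M_\Theta$, unique and given by $\Theta(\xi)=T(\xi)|_{\mathcal W_1}$. Combining this with the observation preceding Lemma \ref{L_1} that $\widetilde\Theta$ is an isometry if and only if $(\rho ,M_\Theta)$ is isometric, together with the fact that $(\rho ,T)$ is isometric, yields the stated equivalence.

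It remains to compute $\Theta(\xi)=T(\xi)|_{\mathcal W_1}$ explicitly. Here I would use that purity of $(\sigma ,V^{(1)})$ gives the orthogonal decomposition $\mathcal H=\bigoplus_{n\ge 0}\widetilde V_n^{(1)}(E_1^{\otimes n}\otimes\mathcal W_1)$, from which the Wold unitary reads $\Pi_{V^{(1)}}g=\bigoplus_{n\ge 0}(I_{E_1^{\otimes n}}\otimes P_{\mathcal W_1})\widetilde V_n^{(1)*}g$; the wandering property of $\mathcal W_1$, applied through the standard factorizations of $\widetilde V_n^{(1)*}\widetilde V_m^{(1)}$ together with $P_{\mathcal W_1}\widetilde V_k^{(1)}|_{E_1^{\otimes k}\otimes\mathcal W_1}=0$ for $k\ge 1$, is exactly what makes this the correct coordinate form. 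Since $\Pi_{V^{(1)}}$ fixes $\mathcal W_1$ (the $n=0$ summand), for $h\in\mathcal W_1$ we obtain $\Theta(\xi)h=\Pi_{V^{(1)}}V^{(2)}(\xi)h=\bigoplus_n (I_{E_1^{\otimes n}}\otimes P_{\mathcal W_1})\widetilde V_n^{(1)*}V^{(2)}(\xi)h$, and identifying the $n$-th summand with the image of the embedding $\widetilde S_n$ of $E_1^{\otimes n}\otimes\mathcal W_1$ into $\mathcal F(E_1)\otimes_\pi\mathcal W_1$ gives precisely $\Theta(\xi)=\sum_{n\in\mathbb N_0}\widetilde S_n(I_{E_1^{\otimes n}}\otimes P_{\mathcal W_1})\widetilde V_n^{(1)*}V^{(2)}(\xi)$.

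The main obstacle I anticipate is not the equivalence but the bookkeeping in this last step: justifying the explicit series form of $\Pi_{V^{(1)}}$ and handling the several canonical identifications (the leg-swap $t_{2,1}$ versus the Hilbert-space legs, the identification of $\mathcal W_1$ with the degree-zero part of the Fock module, and the role of $\widetilde S_n$ as a summand embedding) so that $\Theta$ emerges with the stated normalization. The intertwining computations of the first two paragraphs are routine once the adjoint relations above are in hand.
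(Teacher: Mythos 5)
Your proposal is correct and follows essentially the same route as the paper: conjugate by the Wold--von Neumann unitary $\Pi_{V^{(1)}}$ to pass between the commutation relation \eqref{rep} for $(V^{(1)},V^{(2)})$ and for $(S,T)$, invoke Lemma \ref{L_1} to get $T=M_\Theta$ with $\Theta(\xi)=T(\xi)|_{\mathcal W_1}$, use the observation that $\widetilde\Theta$ is an isometry exactly when $(\rho,M_\Theta)$ is isometric, and then derive the explicit series for $\Theta$ from purity via the identity $\sum_{n\in\mathbb N_0}\widetilde V^{(1)}_n(I_{E_1^{\otimes n}}\otimes P_{\mathcal W_1})\widetilde V^{(1)*}_n=I_{\mathcal H}$ (your coordinate form of $\Pi_{V^{(1)}}$ is equivalent to this telescoping identity used in the paper). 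The only cosmetic difference is that you phrase the transfer of \eqref{rep} as a single unitary-conjugation equivalence, whereas the paper writes out the two directions as separate intertwining computations.
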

\begin{proof}
	Suppose that  $(\sigma , V^{(1)},V^{(2)})$  is an isometric covariant representation of $\mathbb{E}$  on  $\mathcal{H}.$  Since $\Pi_{V^{(1)}}\widetilde{V}^{(1)}=\widetilde{S}(I_{E_1} \otimes \Pi_{V^{(1)}}),$ we have
	\begin{align*}
		\widetilde{T}(I_{E_2} \otimes \widetilde{S})&=\Pi_{V^{(1)}}\widetilde{V}^{(2)}(I_{E_2} \otimes \Pi^*_{V^{(1)}}\widetilde{S})
		=\Pi_{V^{(1)}}\widetilde{V}^{(2)}(I_{E_2} \otimes \widetilde{V}^{(1)}(I_{E_1} \otimes \Pi^*_{V^{(1)}}))
		\\&=\Pi_{V^{(1)}}\widetilde{V}^{(1)}(I_{E_1} \otimes \widetilde{V}^{(2)})(t_{2,1} \otimes I_{\mathcal{H}})( I_{E_2 \otimes E_1}\otimes \Pi^*_{V^{(1)}})
		\\&=\widetilde{S}(I_{E_1} \otimes \Pi_{V^{(1)}})(I_{E_1} \otimes \widetilde{V}^{(2)})(t_{2,1} \otimes \Pi^*_{V^{(1)}})
		=\widetilde{S}(I_{E_1} \otimes \widetilde{T})(t_{2,1} \otimes I_{\mathcal{F}(E_1) \otimes \mathcal{W}_1}).
	\end{align*}
	Therefore $(\rho, S, T)$ is an isometric covariant representation of $\mathbb{E}$ on $\mathcal{F}(E_1) \otimes \mathcal{W}_1$. By Lemma \ref{L_1}, there exists an isometric bi-module map $\Theta: E_2 \rightarrow B(\mathcal{W}_1, \mathcal{F}(E_1) \otimes \mathcal{W}_1)$ such that $T=M_{\Theta}.$  Since $(\sigma, V^{(1)})$ is pure, we get
	\begin{align*}
		\sum_{n \in \mathbb{N}_0} \widetilde{V}_{n}^{(1)}(I_{E_1^{\otimes n}} \otimes P_{\mathcal{W}_1})\widetilde{V}_{n}^{(1)^*} &=\sum_{n \in \mathbb{N}_0} \widetilde{V}_{n}^{(1)}(I_{E_1^{\otimes n}} \otimes (I_{\mathcal{H}}-\widetilde{V}^{(1)} \widetilde{V}^{(1)^*}))\widetilde{V}_{n}^{(1)^*} =I_{\mathcal{H}}.
	\end{align*}
	
	If $w \in \mathcal{W}_1 $, then ${\Pi^*_{V^{(1)}}}(w)=w$ and it follows from the previous equality that    $$V^{(2)}(\xi)w=\sum_{n  \in \mathbb{N}_0} \widetilde{V}_{n}^{(1)}(I_{E_1^{\otimes n}} \otimes P_{\mathcal{W}_1})\widetilde{V}_{n}^{(1)^*}V^{(2)}(\xi)w, \:\:  \xi \in  E_2.$$ Therefore\begin{align*}
		\Pi_{V^{(1)}}V^{(2)}(\xi)w&=\Pi_{V^{(1)}}\sum_{n \in \mathbb{N}_0} \widetilde{V}_{n}^{(1)}(I_{E_1^{\otimes n}} \otimes P_{\mathcal{W}_1})\widetilde{V}_{n}^{(1)^*}V^{(2)}(\xi)w =\sum_{n \in \mathbb{N}_0}\widetilde{S}_{n}(I_{E_1^{\otimes n}} \otimes P_{\mathcal{W}_1})\widetilde{V}_{n}^{(1)^*}V^{(2)}(\xi)w.
	\end{align*} Hence  for  $\xi \in E_2,$ we have $\Theta(\xi)=\sum_{n \in \mathbb{N}_0}\widetilde{S}_{n}(I_{E_1^{\otimes n}} \otimes P_{\mathcal{W}_1})\widetilde{V}_{n}^{(1)^*}V^{(2)}(\xi).$

	Conversely, suppose that $T=M_{\Theta},$ then by  Lemma \ref{L_1},  $(\rho, S, T)$ is a c.b.c. representation of $\mathbb{E}$ on $\mathcal{F}(E_1)\otimes \mathcal{W}_1.$ Further, we have 
	\begin{align*}
		\wV^{(1)}(I_{E_1}\ot \wV^{(2)})&=\Pi_{V^{(1)}}^*\widetilde{S}(I_{E_1}\ot \widetilde{T}(I_{E_2}\ot  \Pi_{V^{(1)}}))
		\\&=\Pi_{V^{(1)}}^* \widetilde{T}(I_{E_2}\ot \widetilde{S})(t_{1,2}\ot I_{\mathcal{F}(E_1) \otimes \mathcal{W}_1})(I_{E_1\ot E_2}\ot  \Pi_{V^{(1)}})
		\\&=\Pi_{V^{(1)}}^* \widetilde{T}(I_{E_2}\ot \Pi_{V^{(1)}}\wV^{(1)}(I_{E_1}\ot \Pi_{V^{(1)}}^*) )(t_{1,2}\ot \Pi_{V^{(1)}})\\&=\wV^{(2)}(I_{E_2}\ot \wV^{(1)})(t_{1,2}\ot I_{\mathcal{H}}).
	\end{align*} Therefore $(\sigma , V^{(1)},V^{(2)})$ is an isometric covariant  representation of $\mathbb{E}$ on $\mathcal{H}.$
\end{proof}

Suppose that  $(\sigma , V^{(1)},V^{(2)})$ is an isometric covariant representation of $\mathbb{E}$ over $\mathbb{N}_0^2$ on $\mathcal{H}.$ Define  $\widetilde{V}=\widetilde{V}^{(1)}(I_{E_1} \otimes \widetilde{V}^{(2)}),$ then by Lemma \ref{MSC} $(\sigma, V)$ is also an isometric covariant represenation of $E:=E_1 \ot E_2.$
If $(\sigma, V)$ is pure, then we say  that $(\sigma , V^{(1)},V^{(2)})$ is pure.
Throughout this paper, we will use the following symbols unless otherwise stated:	
\[ \mathcal{W}=\mathcal{H} \ominus \widetilde{V}(E  \otimes \mathcal{H}) \:\:\: \mbox{and} \:\:\: \mathcal{W}_{i}=\mathcal{H} \ominus \widetilde{V}^{(i)}(E_i \otimes \mathcal{H}), \quad 1 \leq i\leq 2. \]

Note that
\begin{align*}
	I-\widetilde{V}\widetilde{V}^* &=
	I-\widetilde{V}^{(1)}\widetilde{V}^{(1)^*}+\widetilde{V}^{(1)}\widetilde{V}^{(1)^*}-\widetilde{V}^{(1)}(I_{E_1}\otimes\widetilde{V}^{(2)}\widetilde{V}^{(2)^*})\widetilde{V}^{(1)^*}\\&=I-\widetilde{V}^{(1)}\widetilde{V}^{(1)^*}+\widetilde{V}^{(1)}(I_{E_1}\otimes(I_{\mathcal{H}}-\widetilde{V}^{(2)}\widetilde{V}^{(2)^*}))\widetilde{V}^{(1)^*}.
\end{align*}
Similarly, 	$I-\widetilde{V}\widetilde{V}^* =I-\widetilde{V}^{(2)}\widetilde{V}^{(2)^*}+\widetilde{V}^{(2)}(I_{E_2}\otimes(I_{\mathcal{H}}-\widetilde{V}^{(1)}\widetilde{V}^{(1)^*}))\widetilde{V}^{(2)^*}.$ Then
\begin{align}\label{WWW}
	\mathcal{W}=\mathcal{W}_{1} \oplus \widetilde{V}^{(1)}(E_1 \otimes \mathcal{W}_{2})=\widetilde{V}^{(2)}(E_2 \otimes \mathcal{W}_{1}) \oplus \mathcal{W}_{2}.
\end{align}


For $1 \leq i \leq 2,$ we  have
\begin{align*}
	\wV^{(i)}(I_{E_i}\ot \wV) &=\wV^{(i)}(I_{E_i}\ot \wV^{(1)})(I_{E_i}\ot I_{E_1} \otimes \widetilde{V}^{(2)})=\wV^{(1)}(I_{E_1}\ot \wV^{(i)})(t_{i,1}\ot \wV^{(2)})\\&=\wV^{(1)}(I_{E_1}\ot \wV^{(i)}(I_{E_i}\ot \wV^{(2)}))(t_{i,1}\ot I_{E_2} \ot I_{\mathcal{H}})\\&=\wV^{(1)}(I_{E_1}\ot \wV^{(2)}(I_{E_2}\ot \wV^{(i)})(t_{i,2}\ot I_{\mathcal{H}}))(t_{i,1}\ot I_{E_2} \ot I_{\mathcal{H}})\\&=\wV(I_{E}\ot \wV^{(i)})((I_{E_1}\ot t_{i,2})(t_{i,1}\ot I_{E_2})\ot I_{\mathcal{H}})=\wV(I_{E}\ot \wV^{(i)})(t_{i,\bf{1}} \ot I_{\mathcal{H}}),
\end{align*} where $t_{i,\bf{1}}:E_i \ot E \to E\ot E_i$ is an isomorphism. Therefore $(\sigma , V^{(1)},V^{(2)}, V)$ is also an isometric covariant representation of the product system  over $\mathbb{N}_0^3$ determined by the $C^*$-correspondences $\{E_1, E_2, E\}$ on $\mathcal{H}.$ 
Suppose that $(\sigma , V^{(1)},V^{(2)})$ is  pure  and   $\Pi_V$  is  the Wold-von Neumann decomposition of $(\sigma , V)$ with the wandering subspace $\mathcal{W}.$ Since  $(\sigma , V,  V^{(i)}), 1 \leq i \leq 2,$ is an isometric covariant representation and using Theorem \ref{L6},  there exist  isometric bi-module maps $\Theta_i : E_i \to {B}(\mathcal{W}, \mathcal{F}({E})\otimes \mathcal{W})$ such that \begin{align}\label{BBB}
	\Pi_V V^{(i)}(\xi_i)\Pi_V^*=M_{\Theta_i}(\xi_i),\quad \xi_i \in E_i, 1 \leq  i \leq 2.\end{align}
It follows  from the above equation, the triple  $(\rho,M_{{\Theta_1}}, M_{{\Theta_2}})$ is a pure isometric covariant representaion of $\mathbb{E}$ on $ \mathcal{F}({E})\otimes \mathcal{W},$ where $\rho: \mathcal{A} \rightarrow B( \mathcal{F}({E})\otimes \mathcal{W})$ defined by $\rho(a)=\phi_{\infty}(a)\ot I_{ \mathcal{W}}, a\in \mathcal{A}$ and $ \phi_{\infty}$ is a left module action of $\mathcal{A}$ on $ \mathcal{F}({E}).$ The covariant representation $(\rho,M_{{\Theta_1}}, M_{{\Theta_2}})$  is called  the  {\it BCL-representation} for the pure isometric covariant representation $(\sigma , V^{(1)},V^{(2)})$ of the product system $\mathbb{E}$ over $\mathbb{N}_0^2.$

Now we present an explicit construction for the isometric bi-module maps $(\Theta_1, \Theta_2)$ of the BCL-representation $(\rho,M_{{\Theta_1}}, M_{{\Theta_2}}).$ 

\begin{theorem}\label{BCL}
	Let $(\sigma , V^{(1)},V^{(2)})$ be a pure  isometric covariant representation of $\mathbb{E}$ on $\mathcal{H}$ and $(\rho,M_{{\Theta_1}}, M_{{\Theta_2}})$ be the BCL-representation of $(\sigma , V^{(1)},V^{(2)}).$ Then 
	\begin{equation}\label{H7}
		\widetilde{\Theta}_1 = \widetilde{S}(I_{E_1}\ot \wV^{(2)*}|_{\wV^{(2)}(E_2 \ot \mathcal{W}_1)}) \oplus \wV^{(1)}|_{E_1\ot \mathcal{W}_2} 
	\end{equation}
	and
	\begin{equation}
		\widetilde{\Theta}_2 = \widetilde{S}(t_{2,1}\ot I_{\mathcal{W}})(I_{E_2}\ot \wV^{(1)*}|_{\wV^{(1)}(E_1 \ot \mathcal{W}_2)}) \oplus \wV^{(2)}|_{E_2\ot \mathcal{W}_1} ,
	\end{equation} where $(\rho, S)$ is the induced representation of $E$  induced by $\sigma|_{\mathcal{W}}.$
\end{theorem}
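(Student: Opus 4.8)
The plan is to read off both formulas from the single identity
$$\widetilde{\Theta}_i(\xi_i \otimes w) = \Pi_V\, V^{(i)}(\xi_i)\, w, \qquad \xi_i \in E_i,\ w \in \mathcal{W},$$
which is immediate from the defining relation (\ref{BBB}): since $M_{\Theta_i}(\xi_i)|_{\mathcal{W}} = \Theta_i(\xi_i)$ and $\Pi_V^*$ restricted to the degree-zero summand $\mathcal{W} \subseteq \mathcal{F}(E)\otimes\mathcal{W}$ is just the inclusion $\mathcal{W}\hookrightarrow\mathcal{H}$, we get $\Theta_i(\xi_i)w = \Pi_V V^{(i)}(\xi_i)\Pi_V^* w = \Pi_V V^{(i)}(\xi_i)w$. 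The key feature of $\Pi_V$ is that it acts homogeneously: by Theorem \ref{L4} it carries $\widetilde{V}_n(E^{\otimes n}\otimes\mathcal{W})$ onto the degree-$n$ summand $\widetilde{S}_n(E^{\otimes n}\otimes\mathcal{W})$ of $\mathcal{F}(E)\otimes\mathcal{W}$. Hence the whole computation reduces to splitting $V^{(i)}(\xi_i)w$ into its degree-$0$ and degree-$1$ Wold components and applying $\Pi_V$ summand by summand.

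For $\widetilde{\Theta}_1$ I would decompose $w$ via the second form of (\ref{WWW}), $\mathcal{W} = \widetilde{V}^{(2)}(E_2\otimes\mathcal{W}_1)\oplus\mathcal{W}_2$, writing $w = \widetilde{V}^{(2)}(\zeta) + w_2$ with $\zeta \in E_2\otimes\mathcal{W}_1$ and $w_2\in\mathcal{W}_2$, so that
$$V^{(1)}(\xi_1)w = \widetilde{V}^{(1)}\big(\xi_1\otimes\widetilde{V}^{(2)}(\zeta)\big) + \widetilde{V}^{(1)}(\xi_1\otimes w_2).$$
The second term lies in $\widetilde{V}^{(1)}(E_1\otimes\mathcal{W}_2)\subseteq\mathcal{W}$, i.e.\ is degree $0$ and is fixed by $\Pi_V$; the first term, using $\widetilde{V} = \widetilde{V}^{(1)}(I_{E_1}\otimes\widetilde{V}^{(2)})$, equals $\widetilde{V}(\xi_1\otimes\zeta)$ with $\xi_1\otimes\zeta\in E\otimes\mathcal{W}_1\subseteq E\otimes\mathcal{W}$, hence is degree $1$ and is sent by $\Pi_V$ to $\widetilde{S}(\xi_1\otimes\zeta)$. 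Restoring $\zeta = \widetilde{V}^{(2)*}|_{\widetilde{V}^{(2)}(E_2\otimes\mathcal{W}_1)}\,\widetilde{V}^{(2)}(\zeta)$ and reading the result against the induced splitting $E_1\otimes\mathcal{W} = \big(E_1\otimes\widetilde{V}^{(2)}(E_2\otimes\mathcal{W}_1)\big)\oplus(E_1\otimes\mathcal{W}_2)$ gives exactly (\ref{H7}); the two target pieces sit in distinct Wold degrees, which is what makes the displayed $\oplus$ an orthogonal decomposition.

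For $\widetilde{\Theta}_2$ I would instead use the first form of (\ref{WWW}), $\mathcal{W} = \mathcal{W}_1\oplus\widetilde{V}^{(1)}(E_1\otimes\mathcal{W}_2)$, and write $w = w_1 + \widetilde{V}^{(1)}(\zeta')$ with $w_1\in\mathcal{W}_1$ and $\zeta'\in E_1\otimes\mathcal{W}_2$. Here $V^{(2)}(\xi_2)w_1 = \widetilde{V}^{(2)}(\xi_2\otimes w_1)\in\widetilde{V}^{(2)}(E_2\otimes\mathcal{W}_1)\subseteq\mathcal{W}$ is the degree-$0$ part, matching $\widetilde{V}^{(2)}|_{E_2\otimes\mathcal{W}_1}$. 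The only genuinely new ingredient is the second piece, where $V^{(2)}$ must be commuted past $V^{(1)}$: applying the relation (\ref{rep}) with $(i,j)=(2,1)$ gives $\widetilde{V}^{(2)}(I_{E_2}\otimes\widetilde{V}^{(1)}) = \widetilde{V}\,(t_{2,1}\otimes I_{\mathcal{H}})$, so $V^{(2)}(\xi_2)\widetilde{V}^{(1)}(\zeta') = \widetilde{V}(t_{2,1}\otimes I_{\mathcal{H}})(\xi_2\otimes\zeta')$ is degree $1$ and $\Pi_V$ sends it to $\widetilde{S}(t_{2,1}\otimes I_{\mathcal{W}})(\xi_2\otimes\zeta')$; reading this against $E_2\otimes\mathcal{W} = (E_2\otimes\mathcal{W}_1)\oplus\big(E_2\otimes\widetilde{V}^{(1)}(E_1\otimes\mathcal{W}_2)\big)$ yields the stated formula for $\widetilde{\Theta}_2$.

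The routine verifications are the isometry cancellations $\widetilde{V}^{(j)*}\widetilde{V}^{(j)} = I$ and the distributivity of the interior tensor product over the orthogonal sums in (\ref{WWW}). The main thing to get right is the \emph{Wold-degree bookkeeping}: one must confirm that the two pieces of $V^{(i)}(\xi_i)w$ are genuinely homogeneous of degrees $0$ and $1$ — this rests on the containments $\mathcal{W}_j\subseteq\mathcal{W}$ and $\widetilde{V}(E\otimes\mathcal{W}_j)\subseteq\widetilde{V}_1(E\otimes\mathcal{W})$ coming from (\ref{WWW}) — together with the correct insertion of the flip $t_{2,1}$ in the degree-one term for $\widetilde{\Theta}_2$, which is precisely where the commutation relation (\ref{rep}) does all the work.
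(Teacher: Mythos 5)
Your proposal is correct and follows essentially the same route as the paper's proof: both restrict $M_{\Theta_i}$ to the degree-zero summand $\mathcal{W}$, split $w$ via the decomposition (\ref{WWW}), use the commutation relation (\ref{rep}) to recognize the degree-one piece as $\widetilde{V}(t_{2,1}\otimes I_{\mathcal{H}})(\cdot)$, and apply the intertwining property $\Pi_V\widetilde{V}=\widetilde{S}(I_E\otimes\Pi_V)$ together with the fact that $\Pi_V$ fixes $\mathcal{W}$. The only difference is presentational: the paper computes $\widetilde{\Theta}_2$ in detail and invokes symmetry for $\widetilde{\Theta}_1$, whereas you write out both, and you handle the degree-zero piece by the orthogonal splitting directly where the paper inserts $I_{E_2}\otimes P_{\mathcal{W}_1}$.
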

\begin{proof}
	Since $\mathcal{W}$ is $\sigma$-invariant subspace, consider the induced representation $(\rho, S)$ of $E$ induced by $\sigma|_{\mathcal{W}}.$   Let  $w \in \mathcal{W}={\wV^{(1)}(E_1 \ot \mathcal{W}_2)} \oplus \mathcal{W}_1,$ then there exist $w_1 \in \mathcal{W}_1$ and $\eta_1 
	\in E_1 \ot \mathcal{W}_2$ such that $w=\wV^{(1)}\eta_1 + w_1,$  therefore $\wV^{(1)*}w= \eta_1.$   It follows that for $\xi_2 \in E_2,$ we have
	\begin{align*}
		\wV^{(2)}(\xi_2 \ot w)&=\wV^{(2)}(I_{E_2}\ot \wV^{(1)})(\xi_2\ot \eta_1) + \wV^{(2)}(\xi_2 \ot w_1)\\&= \wV(t_{2,1}\ot I_{\mathcal{H}})(\xi_2\ot \eta_1) + \wV^{(2)}(\xi_2 \ot w_1) \: \mbox{and}
	\end{align*}
	\begin{align*}
		\wV^{(2)}(\xi_2 \ot w_1)&=\wV^{(2)}(\xi_2 \ot w)-\wV^{(2)}(I_{E_2}\ot \wV^{(1)})(\xi_2\ot \wV^{(1)*}w)\\&= \wV^{(2)}(I_{E_2}\ot (I-\wV^{(1)}\wV^{(1)*}))(\xi_2 \ot w)=\wV^{(2)}(I_{E_2}\ot P_{\mathcal{W}_1})(\xi_2 \ot w),
	\end{align*}
	where $P_{\mathcal{W}_1}$ is the orthogonal projection onto $\mathcal{W}_1.$
	Using Equation (\ref{WWW}),  we get  $\wV^{(2)}(\xi_2 \ot w_1)\in \mathcal{W}$ and hence 
	\begin{align*}
		{\Theta_2}(\xi_2)w&=M_{{\Theta}_2}(\xi_2)w=\Pi_V V^{(2)}(\xi_2)\Pi_V^{*}w=\Pi_V V^{(2)}(\xi_2)w\\&= \Pi_V \wV(t_{2,1}\ot I_{\mathcal{H}})(\xi_2\ot \eta_1)+ \Pi_V \wV^{(2)}(\xi_2 \ot w_1)\\&= \widetilde{S}(I_{E}\ot \Pi_V )(t_{2,1}\ot I_{\mathcal{H}})(\xi_2\ot \eta_1) + \Pi_V \wV^{(2)}(\xi_2 \ot w_1)\\&= \widetilde{S}(t_{2,1}\ot I_{\mathcal{W}})(\xi_2 \ot \eta_1) + \wV^{(2)}(\xi_2 \ot w_1)\\ &=\widetilde{S}(t_{2,1}\ot I_{\mathcal{W}})(\xi_2 \ot \wV^{(1)*}w) + \wV^{(2)}(I_{E_2}\ot P_{\mathcal{W}_1})(\xi_2 \ot w),
	\end{align*}
	the last equality follows from  $\wV^{(1)*}w= \eta_1 .$ Therefore 
	\begin{equation*}
		\widetilde{\Theta}_2 = \widetilde{S}(t_{2,1}\ot I_{\mathcal{W}})(I_{E_2}\ot \wV^{(1)*}|_{\wV^{(1)}(E_1 \ot \mathcal{W}_2)}) \oplus \wV^{(2)}|_{E_2\ot \mathcal{W}_1}.
	\end{equation*}	
	Similarly, we get the relation for $\widetilde{\Theta}_1.$
\end{proof}

The proof of the following theorem follows from  Theorem \ref{BCL}.

\begin{theorem}\label{K2}
	Suppose that $(\sigma , V^{(1)},V^{(2)})$ is a pure isometric covariant representation of $\mathbb{E}$ on $\mathcal{H}.$ Then the BCL-representation $(\rho,M_{{\Theta_1}}, M_{{\Theta_2}})$ of $(\sigma , V^{(1)},V^{(2)})$ is given by $$\widetilde{\Theta}_1 = (P_{\mathcal{W}_1}^{\perp} + \widetilde{S}(I_{E_1\ot E_2}\ot P_{\mathcal{W}_1}))U'
	$$ and $$
	\widetilde{\Theta}_2 = (P_{\mathcal{W}_2}^{\perp} + \widetilde{S}(t_{2,1}\ot P_{\mathcal{W}_2}))U,
	$$ where
	$U'= \begin{bmatrix}
		\wV^{(1)}|_{E_1 \ot \mathcal{W}_2} & 0 \\
		0 & (I_{E_1}\ot \wV^{(2)*}|_{\wV^{(2)}(E_2 \ot \mathcal{W}_1)})
	\end{bmatrix} : E_1 \ot \mathcal{W}_2 \oplus E_1 \ot \wV^{(2)}(E_2 \ot \mathcal{W}_1) \to 	\wV^{(1)}(E_1 \ot \mathcal{W}_2) \oplus E \ot \mathcal{W}_1$ and 
	
	$U= \begin{bmatrix}
		\wV^{(2)}|_{E_2 \ot \mathcal{W}_1} & 0\\
		0 & (I_{E_2}\ot \wV^{(1)*}|_{\wV^{(1)}(E_1 \ot \mathcal{W}_2)})
	\end{bmatrix} : E_2 \ot \mathcal{W}_1 \oplus E_2\ot \wV^{(1)}(E_1 \ot \mathcal{W}_2) \to \wV^{(2)}(E_2 \ot \mathcal{W}_1) \oplus E_2 \ot E_1 \ot \mathcal{W}_2$  are unitary isomorphisms.
	
	
\end{theorem}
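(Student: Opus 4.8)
The plan is to derive Theorem \ref{K2} directly from the explicit formulas for $\widetilde{\Theta}_1$ and $\widetilde{\Theta}_2$ obtained in Theorem \ref{BCL}, by recognizing each of them as a composition of a unitary rearrangement with a simple \emph{projection-plus-creation} operator. The structural backbone is the pair of orthogonal decompositions of the wandering subspace recorded in Equation (\ref{WWW}), namely
$$\mathcal{W}=\mathcal{W}_{1} \oplus \widetilde{V}^{(1)}(E_1 \otimes \mathcal{W}_{2})=\widetilde{V}^{(2)}(E_2 \otimes \mathcal{W}_{1}) \oplus \mathcal{W}_{2}.$$

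First I would treat $\widetilde{\Theta}_1$. Tensoring the second decomposition with $E_1$ yields $E_1 \ot \mathcal{W} = (E_1 \ot \mathcal{W}_2) \oplus E_1 \ot \widetilde{V}^{(2)}(E_2 \ot \mathcal{W}_1)$, which is precisely the domain of $U'$. On $E_1 \ot \mathcal{W}_2$ the Theorem \ref{BCL} formula reads $\widetilde{\Theta}_1 = \widetilde{V}^{(1)}|_{E_1 \ot \mathcal{W}_2}$, with image in $\widetilde{V}^{(1)}(E_1 \ot \mathcal{W}_2)=\mathcal{W}\ominus\mathcal{W}_1$ inside the zeroth level of $\mathcal{F}(E)\ot\mathcal{W}$; on $E_1 \ot \widetilde{V}^{(2)}(E_2 \ot \mathcal{W}_1)$ it reads $\widetilde{S}(I_{E_1} \ot \widetilde{V}^{(2)*})$, with image in the first level $E \ot \mathcal{W}_1$. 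I would then factor out the unitary $U'$ (acting as $\widetilde{V}^{(1)}$ on the first block and as $I_{E_1} \ot \widetilde{V}^{(2)*}$ on the second), leaving the outer operator acting as the identity inclusion $P_{\mathcal{W}_1}^{\perp}$ on $\widetilde{V}^{(1)}(E_1 \ot \mathcal{W}_2)$ at level $0$ and as the creation map $\widetilde{S}(I_{E} \ot P_{\mathcal{W}_1})$ on $E \ot \mathcal{W}_1$ at level $1$. Since the zeroth and first levels of $\mathcal{F}(E)\ot\mathcal{W}$ are orthogonal, these two terms act on complementary summands and their sum collapses to the single expression $P_{\mathcal{W}_1}^{\perp}+\widetilde{S}(I_{E_1\ot E_2}\ot P_{\mathcal{W}_1})$, giving the stated form of $\widetilde{\Theta}_1$.

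The treatment of $\widetilde{\Theta}_2$ is entirely parallel, now tensoring the first decomposition in (\ref{WWW}) with $E_2$, so that the domain becomes $(E_2 \ot \mathcal{W}_1) \oplus E_2 \ot \widetilde{V}^{(1)}(E_1 \ot \mathcal{W}_2)$. The only genuine difference is the appearance of the flip $t_{2,1}$: after applying $I_{E_2}\ot\widetilde{V}^{(1)*}$, the second block lands in $E_2 \ot E_1 \ot \mathcal{W}_2$, and one must reorder $E_2 \ot E_1$ into $E=E_1 \ot E_2$ before creating, which is exactly the role of $t_{2,1}$ inside $\widetilde{S}(t_{2,1}\ot P_{\mathcal{W}_2})$. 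To finish, I would check that $U'$ and $U$ are genuinely unitary: each diagonal block is either $\widetilde{V}^{(i)}$ restricted to $E_i\ot\mathcal{W}_j$, which is unitary onto its range because $(\sigma,V^{(i)})$ is isometric, or an ampliation $I\ot\widetilde{V}^{(i)*}$ of the inverse of such a restriction, hence also unitary; as the blocks have mutually orthogonal domains and ranges, the block-diagonal maps $U'$ and $U$ are unitary isomorphisms onto the indicated spaces.

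The main point requiring care is not any single computation but the bookkeeping of where each component lives: one must simultaneously track the splitting of $\mathcal{W}$ by both $\mathcal{W}_1$ and $\mathcal{W}_2$, the zeroth-versus-first level of $\mathcal{F}(E)\ot\mathcal{W}$ (which is what converts the genuine direct sum of Theorem \ref{BCL} into the compact ``$+$'' used here), and the single twist $t_{2,1}$ needed to pass from $E_2\ot E_1$ to $E$. Once these identifications are in place, the equalities $\widetilde{\Theta}_1=(P_{\mathcal{W}_1}^{\perp}+\widetilde{S}(I_{E_1\ot E_2}\ot P_{\mathcal{W}_1}))U'$ and $\widetilde{\Theta}_2=(P_{\mathcal{W}_2}^{\perp}+\widetilde{S}(t_{2,1}\ot P_{\mathcal{W}_2}))U$ follow by comparing the two sides on each summand.
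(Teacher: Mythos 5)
Your proposal is correct and follows essentially the same route as the paper: the paper's proof of Theorem \ref{K2} consists precisely of the remark that it follows from Theorem \ref{BCL}, and your argument supplies the bookkeeping (the two decompositions of $\mathcal{W}$ from Equation (\ref{WWW}), the level-$0$ versus level-$1$ splitting in $\mathcal{F}(E)\ot\mathcal{W}$, the single flip $t_{2,1}$, and the block-diagonal unitarity of $U$ and $U'$) that the paper leaves implicit. No gaps; this is a faithful elaboration of the intended proof.
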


\begin{remark}\label{K3}
	Suppose that $(\sigma , V^{(1)},V^{(2)})$ is a pure  isometric covariant representation of $\mathbb{E}$ on $\mathcal{H}.$ Using Equation $(\ref{H7})$,  define a bounded linear map $(I_{E_2},I_{\mathcal{A}})\ot \widetilde{\Theta}_1:E_2\ot E_1 \ot \mathcal{W}_2 \oplus  E_1 \ot \wV^{(2)}(E_2 \ot \mathcal{W}_1) \to E_2 \ot \wV^{(1)}(E_1 \ot \mathcal{W}_2) \oplus E_1\ot E_2 \ot \mathcal{W}_1$ by
	$$(I_{E_2},I_{\mathcal{A}})\ot \widetilde{\Theta}_1= I_{E_2} \ot \wV^{(1)}|_{E_1\ot \mathcal{W}_2} \oplus \widetilde{S}(I_{E_1}\ot \wV^{(2)*}|_{\wV^{(2)}(E_2 \ot \mathcal{W}_1)}),$$ and it satisfies
	$((I_{E_2},I_{\mathcal{A}})\ot \widetilde{\Theta}_1)(\phi_2(a)\ot I_{E_1}\ot I_{\mathcal{W}_2}\oplus \phi_1(a)\ot I_{\wV^{(2)}(E_2 \ot \mathcal{W}_1)})=(\phi_2(a)\ot I_{\wV^{(1)}(E_1 \ot \mathcal{W}_2)}\oplus \phi_1(a)\ot I_{E_2}\ot I_{\mathcal{W}_1})((I_{E_2},I_{\mathcal{A}})\ot \widetilde{\Theta}_1),$ where $\phi_i$ is the left action of $\mathcal{A}$ on $E_i,1 \leq i \leq 2$ and $a \in \mathcal{A}.$	Also,  define a unitary isomorphism $(I_{E_1},I_{\mathcal{A}})\ot U:E_1\ot E_2 \ot \mathcal{W}_1 \oplus  E_2 \ot \wV^{(1)}(E_1 \ot \mathcal{W}_2) \to E_1 \ot \wV^{(2)}(E_2 \ot \mathcal{W}_1) \oplus E_2\ot E_1 \ot \mathcal{W}_2$ by 
	$$(I_{E_1},I_{\mathcal{A}})\ot U=I_{E_1} \ot \wV^{(2)}|_{E_2 \ot \mathcal{W}_1} \oplus  (I_{E_2}\ot \wV^{(1)*}|_{\wV^{(1)}(E_1 \ot \mathcal{W}_2)}) .$$ Then,  we conclude that
	$$(I_{E_2},I_{\mathcal{A}})\ot \widetilde{\Theta}_1=( (I_{E_1},I_{\mathcal{A}})\ot U^*)( I_{E_1}\ot P_{\mathcal{W}_2}^{\perp} \oplus  I_{E_2 \ot E_1}\ot P_{\mathcal{W}_2} ).$$
\end{remark}

The following theorem gives a complete set of (joint) unitary invariants for a pure isometric covariant representation of the product system over $\mathbb{N}_0^2.$ 
\begin{theorem}
	Let  $(\sigma , V^{(1)},V^{(2)})$ and $(\sigma' , V^{(1)'},V^{(2)'})$ be two pure isometric covariant representation of $\mathbb{E}$ on  the Hilbert spaces $\mathcal{H}$ and $\mathcal{H}',$ respectively. Then  $(\sigma , V^{(1)},V^{(2)})$ and $(\sigma' , V^{(1)'},V^{(2)'})$  are isomorphic if and only if $(\sigma|_{\mathcal{W}},\wV^{(1)}|_{E_1\ot \mathcal{W}_2},I_{E_1}\ot \wV^{(2)*}|_{\wV^{(2)}(E_2 \ot \mathcal{W}_1)})$ and $(\sigma'|_{\mathcal{W}'},\wV^{(1)'}|_{E_1\ot \mathcal{W}_2'},I_{E_1}\ot \wV^{(2)'^*}|_{\wV^{(2)'}(E_2 \ot \mathcal{W}_1')})$ are isomorphic, where $\mathcal{W}_i'$ and $\mathcal{W}'$ are the wandering subspaces  for $(\sigma' , V^{(i)'})$ and  $(\sigma' , V'),$ respectively.
\end{theorem}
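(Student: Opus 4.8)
The plan is to prove this as a standard ``complete unitary invariant'' statement, where the forward direction (isomorphism of representations implies isomorphism of the associated invariant triples) is the routine part, and the converse (recovering an isomorphism of the original representations from the triple) is where the real work lies. The guiding principle is that Theorem \ref{BCL} expresses each $\widetilde{\Theta}_i$ entirely in terms of the data $\sigma|_{\mathcal{W}}$, $\wV^{(1)}|_{E_1\ot \mathcal{W}_2}$, and $I_{E_1}\ot \wV^{(2)*}|_{\wV^{(2)}(E_2 \ot \mathcal{W}_1)}$ (together with the creation operators $\widetilde{S}$ and the canonical isomorphisms $t_{2,1}$, which are intrinsic to the product system $\mathbb{E}$ and not part of the representation). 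Since the BCL-representation $(\rho, M_{\Theta_1}, M_{\Theta_2})$ is, via the Wold--von Neumann unitary $\Pi_V$, isomorphic to $(\sigma, V^{(1)}, V^{(2)})$, it suffices to show that the BCL-representation is completely determined (up to isomorphism) by these three pieces of data.

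First I would handle the forward direction. Suppose $U:\mathcal{H}\to\mathcal{H}'$ implements the isomorphism $(\sigma,V^{(1)},V^{(2)})\cong(\sigma',V^{(1)'},V^{(2)'})$. Then $U$ intertwines the product isometric representations $(\sigma,V)$ and $(\sigma',V')$, hence carries $\mathcal{W}=\mathcal{H}\ominus\widetilde{V}(E\ot\mathcal{H})$ onto $\mathcal{W}'$ and, more precisely, each $\mathcal{W}_i$ onto $\mathcal{W}_i'$ (because $U(\widetilde{V}^{(i)}(E_i\ot\mathcal{H}))=\widetilde{V}^{(i)'}(E_i\ot\mathcal{H}')$ by the intertwining of $\widetilde{V}^{(i)}$ with $I_{E_i}\ot U$). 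I would then check that the compressions $U|_{\mathcal{W}}$ and the induced maps $I_{E_1}\ot U|_{\mathcal{W}_2}$, etc., intertwine $\sigma|_{\mathcal{W}}$ with $\sigma'|_{\mathcal{W}'}$, intertwine $\wV^{(1)}|_{E_1\ot\mathcal{W}_2}$ with its primed analogue, and similarly for the third component; this gives the isomorphism of triples. The only care needed is to track exactly how $U$ restricts and tensors on each summand of the orthogonal decomposition \eqref{WWW}, but each verification is a direct application of the intertwining relations.

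For the converse, suppose the triples are isomorphic via unitaries $u_0:\mathcal{W}\to\mathcal{W}'$, $u_1:E_1\ot\mathcal{W}_2\to E_1\ot\mathcal{W}_2'$ (intertwining the $\wV^{(1)}$-pieces), and $u_2$ on the range of the third component (intertwining the $\wV^{(2)*}$-pieces), all compatible with $\sigma|_{\mathcal{W}}$ versus $\sigma'|_{\mathcal{W}'}$. The key step is to lift $u_0$ to a unitary $\mathcal{F}(E)\ot\mathcal{W}\to\mathcal{F}(E)\ot\mathcal{W}'$, namely $\id_{\mathcal{F}(E)}\ot u_0$, and to verify that this lifted map intertwines $M_{\Theta_i}$ with $M_{\Theta_i'}$. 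By Theorem \ref{BCL} this reduces to checking that $\id_{\mathcal{F}(E)}\ot u_0$ carries $\widetilde{\Theta}_i$ to $\widetilde{\Theta}_i'$, which in turn follows because each $\widetilde{\Theta}_i$ is assembled from $\widetilde{S}$, the intrinsic isomorphisms $t_{2,1}$, and precisely the three components of the triple that $u_0,u_1,u_2$ intertwine by hypothesis. Once $\id_{\mathcal{F}(E)}\ot u_0$ intertwines the BCL-representations, composing with the Wold--von Neumann unitaries $\Pi_V$ and $\Pi_{V'}^*$ produces the desired unitary $\mathcal{H}\to\mathcal{H}'$ implementing the isomorphism of $(\sigma,V^{(1)},V^{(2)})$ and $(\sigma',V^{(1)'},V^{(2)'})$.

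The main obstacle I expect is bookkeeping rather than conceptual: one must confirm that the three listed components of the triple really do determine \emph{both} $\widetilde{\Theta}_1$ and $\widetilde{\Theta}_2$, and that the single unitary $u_0$ on $\mathcal{W}$ is consistent with the separate unitaries $u_1,u_2$ prescribed on the tensored wandering pieces. Concretely, one needs $u_0$ to restrict correctly on the summands $\mathcal{W}_1\oplus\wV^{(1)}(E_1\ot\mathcal{W}_2)$ and $\wV^{(2)}(E_2\ot\mathcal{W}_1)\oplus\mathcal{W}_2$ of \eqref{WWW}, so that the compatibility of $u_1$ and $u_2$ with $u_0$ is automatic. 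The cleanest route is to observe that the decomposition \eqref{WWW} itself, together with the component maps $\wV^{(1)}|_{E_1\ot\mathcal{W}_2}$ and $\wV^{(2)}|_{E_2\ot\mathcal{W}_1}$, reconstructs $\mathcal{W}$ and the action of $u_0$ on it from $\mathcal{W}_1,\mathcal{W}_2$ and the intertwiners; feeding this back into the formulas \eqref{H7} of Theorem \ref{BCL} then forces $\widetilde{\Theta}_i\mapsto\widetilde{\Theta}_i'$ with no further freedom.
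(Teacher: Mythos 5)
Your proposal is correct, and on the substantive half --- recovering an isomorphism of the representations from an isomorphism of the triples --- it is the paper's argument: lift the single unitary $X\colon\mathcal{W}\to\mathcal{W}'$ to $I_{\mathcal{F}(E)}\ot X$, use the explicit formulas (\ref{H7}) of Theorem \ref{BCL} (together with the structure of $\widetilde{M}_{\Theta}$ from Lemma \ref{L_1}) to see that this lift intertwines the two BCL-representations, then conjugate by the Wold--von Neumann unitaries. The differences lie in the other half and in one verification. For the direction (representations isomorphic $\Rightarrow$ triples isomorphic) you work directly on $\mathcal{H}$: the intertwiner $U$ carries $\wV^{(i)}(E_i\ot\mathcal{H})$ onto $\wV^{(i)'}(E_i\ot\mathcal{H}')$, hence $\mathcal{W}$ onto $\mathcal{W}'$ and $\mathcal{W}_i$ onto $\mathcal{W}_i'$, and restriction/tensoring yields the intertwining of the three components; the paper instead stays on the Fock module, shows that the unitary intertwining the BCL-representations also intertwines $\widetilde{S}$ with $\widetilde{S}'$, invokes \cite[Proposition 4.3]{TV21} to force the diagonal form $U=I_{\mathcal{F}(E)}\ot X$, and then restricts to $E_1\ot\mathcal{W}$. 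Your route is more elementary and bypasses that citation; both are sound. In the converse, writing $\psi_2=\wV^{(1)}|_{E_1\ot\mathcal{W}_2}$ and $\psi_1=\wV^{(2)*}|_{\wV^{(2)}(E_2\ot\mathcal{W}_1)}$, you propose to check $\widetilde{\Theta}_2\mapsto\widetilde{\Theta}_2'$ directly from its formula; note that $\widetilde{\Theta}_2$ is assembled not from the triple's components themselves but from their adjoints, $\wV^{(2)}|_{E_2\ot\mathcal{W}_1}=\psi_1^*$ and $\wV^{(1)*}|_{\wV^{(1)}(E_1\ot\mathcal{W}_2)}=\psi_2^*$, which is harmless since unitary intertwinings pass to adjoints, whereas the paper gets the $M_{\Theta_2}$-intertwining for free from that of $M_{\Theta_1}$ via the identity $(I_{E_2}\ot\widetilde{M}_{{\Theta}_1})=\widetilde{M}_{{\Theta}_2}^*\widetilde{S}(t_{2,1}\ot I_{\mathcal{F}(E)\ot \mathcal{W}})$. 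Finally, the compatibility issue you flag between $u_0$, $u_1$, $u_2$ is resolved exactly as you suggest, and as the paper reads the hypothesis: the triple isomorphism is implemented by one unitary $X$ on $\mathcal{W}$, the other two maps being induced from it by restriction and tensoring; with that reading your argument closes with no gap.
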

\begin{proof}
	Consider the induced representations  $(\rho, S)$ and $(\rho', S')$  of $E$ induced by $\sigma|_{\mathcal{W}}$ and $\sigma'|_{\mathcal{W'}},$ respectively. Let $(\rho,M_{{\Theta_1}}, M_{{\Theta_2}})$ and $(\rho',M_{{\Theta_1'}}, M_{{\Theta_2'}})$ be the BCL-representation of $(\sigma , V^{(1)},V^{(2)})$ and  $(\sigma' , V^{(1)'},V^{(2)'}),$  respectively. Suppose $(\sigma|_{\mathcal{W}},\wV^{(1)}|_{E_1\ot \mathcal{W}_2},I_{E_1}\ot \wV^{(2)^*}|_{\wV^{(2)}(E_2 \ot \mathcal{W}_1)})$ and  $(\sigma'|_{\mathcal{W}'},\\\wV^{(1)'}|_{E_1\ot \mathcal{W}_2'} , I_{E_1} \ot \wV^{(2)'^*}|_{\wV^{(2)'}(E_2 \ot \mathcal{W}_1')})$  are isomorphic, this means that, there exists a unitary operator $X: \mathcal{W} \to \mathcal{W}'$ such that   $X\sigma(a)|_{\mathcal{W}}=\sigma'(a)X,$     $(I_{E_2}\ot X){\psi}_1={\psi}_1'X$ and $ X{\psi}_2={\psi}_2'(I_{E_1}\ot X),$ where ${\psi}_1= \wV^{(2)*}|_{\wV^{(2)}(E_2 \ot \mathcal{W}_1)}$, ${\psi}_2=\wV^{(1)}|_{E_1\ot \mathcal{W}_2},$ ${\psi}_1'= \wV^{(2)'^*}|_{\wV^{(2)'}(E_2 \ot \mathcal{W}_1')}$ and  ${\psi}_2'=\wV^{(1)'}|_{E_1\ot \mathcal{W}_2'}.$ Then
	\begin{align*}
		\widetilde{M}_{{\Theta}_1}|_{E_1\ot E^{\ot n}\ot \mathcal{W}}&=\widetilde{S}_n(I_{E^{\ot n}}\ot \widetilde{\Theta}_1)(t_{1,{\bf 1}}^{(1,n)}\ot I_{\mathcal{W}})=\widetilde{S}_n(I_{E^{\ot n}}\ot ((I_{E_1}\ot {\psi}_1) \oplus {\psi}_2))(t_{1,{\bf 1}}^{(1,n)}\ot I_{\mathcal{W}})\\&= (I_{E^{\ot n}}\ot ((I_{E} \ot X^*) \oplus X^*)(I_{E_1}\ot{\psi}_1' \oplus {\psi}_2')(I_{E_1}\ot X))(t_{1,{\bf 1}}^{(1,n)}\ot I_{\mathcal{W}})\\&=(I_{E^{\ot n}}\ot ((I_{E} \ot X^*) \oplus X^*)\widetilde{\Theta}_1'(I_{E_1}\ot X))(t_{1,{\bf 1}}^{(1,n)}\ot I_{\mathcal{W}}),
	\end{align*} where $t_{1,{\bf 1}}^{(1,n)}: E_1 \ot E^{\ot n}  \rightarrow E^{\ot n} \ot E_1 $ is an isomorphism which is a composition of $\{t_{i,j}\: :\: 1 \leq i, j \leq 2\}.$ Define a unitary operator $U:\mathcal{F}({E})\ot \mathcal{W} \to \mathcal{F}({E})\ot \mathcal{W}'$ by $U=I_{\mathcal{F}(E)}\ot X.$ By  using the previous equality, we get
	$$\widetilde{M}_{{\Theta}_1}|_{E_1\ot E^{\ot n}\ot \mathcal{W}}=U^*\widetilde{M}_{{\Theta}_1'}(I_{E_1}\ot U)|_{E_1\ot E^{\ot n}\ot \mathcal{W}},\quad \quad n\in \mathbb{N}_0.$$ Therefore, $\widetilde{M}_{{\Theta}_1}=U^*\widetilde{M}_{{\Theta}_1'}(I_{E_1}\ot U)$ and $U\rho(a)=\rho'(a)U.$
	Since $(I_{E_2}\ot\widetilde{M}_{{\Theta}_1})=\widetilde{M}_{{\Theta}_2}^*\widetilde{S}(t_{2,1}\ot I_{\mathcal{F}(E)\ot \mathcal{W}})$ and $(I_{E_2}\ot\widetilde{M}_{{\Theta}'_1})=\widetilde{M}_{{\Theta}'_2}^*\widetilde{S}'(t_{2,1}\ot I_{\mathcal{F}(E)\ot \mathcal{W}'}),$ it follows that $(\rho,M_{{\Theta_1}}, M_{{\Theta_2}})$ and $(\rho',M_{{\Theta_1'}}, M_{{\Theta_2'}})$ are isomorphic. Hence  $(\sigma , V^{(1)},V^{(2)})$ and $(\sigma' , V^{(1)'},V^{(2)'})$ are isomorphic.
	
	Conversely, suppose that  $(\rho,M_{{\Theta_1}}, M_{{\Theta_2}})$ and $(\rho',M_{{\Theta_1'}}, M_{{\Theta_2'}})$ are isomorphic,  then there exists a unitary operator $U: \mathcal{F}({E})\ot \mathcal{W} \to \mathcal{F}({E})\ot \mathcal{W}'$ such that
	\begin{equation}\label{H1}
		U\rho(a)=\rho'(a)U  \:\: \mbox{and} \:\: U\widetilde{M}_{{\Theta}_i}=\widetilde{M}_{{\Theta}_i'}(I_{E_i}\ot U), \quad a \in \mathcal{A}, 1\leq   i \leq 2.
	\end{equation}  This  yields \begin{align*}
		U\widetilde{S}&=U \widetilde{M}_{{\Theta}_1}(I_{E_1}\ot \widetilde{M}_{{\Theta}_2})= \widetilde{M}_{{\Theta}_1'}(I_{E_1}\ot U \widetilde{M}_{{\Theta}_2}) =  \widetilde{M}_{{\Theta}_1'}(I_{E_1}\ot \widetilde{M}_{{\Theta}_2'}(I_{E_2}\ot U))=\widetilde{S'}(I_E\ot U)
	\end{align*} and by \cite[Proposition 4.3 ]{TV21}, there exists a unitary map $X:\mathcal{W}\to \mathcal{W}'$ such that 
	$U=I_{\mathcal{F}({E})}\ot X.$
	Thus using Equation (\ref{H1}), we get
	\begin{equation}\label{K1}
		(I_{\mathcal{F}({E})}\ot X)\widetilde{M}_{{\Theta}_i}=\widetilde{M}_{{\Theta}_i'}(I_{E_1}\ot  (I_{\mathcal{F}({E})} \ot X)).
	\end{equation}
	
	Since $E_1 \ot \mathcal{W} \subseteq E_1\ot {\mathcal{F}({E})}\ot \mathcal{W},$ using Equation (\ref{K1}) restricted to $E_1 \ot \mathcal{W}$ we get $(I_{\mathcal{F}({E})}\ot X){\widetilde{\Theta}_1}={\widetilde{\Theta}_1'}(I_{E_1}\ot  X),$ and thus by Equation (\ref{H7}) we have $((I_E\ot X) \oplus X)((I_{E_1}\ot {\psi}_1) \oplus {\psi}_2)=((I_{E_1}\ot {\psi}_1') \oplus {\psi}_2')(I_{E_1}\ot  X).$ 
	This shows that  $(\sigma|_{\mathcal{W}},\wV^{(1)}|_{E_1\ot \mathcal{W}_2},I_{E_1}\ot \wV^{(2)*}|_{\wV^{(2)}(E_2 \ot \mathcal{W}_1)})$ and $(\sigma'|_{\mathcal{W'}},\wV^{(1)'}|_{E_1\ot \mathcal{W}_2'},I_{E_1}\ot \wV^{(2)'^*}|_{\wV^{(2)'}(E_2 \ot \mathcal{W}_1')})$  are isomorphic.\end{proof}

The following theorem is an analogue of a classification result due to Bercovici, Douglas and Foias \cite[Theorem 2.1]{BDF06}:
\begin{theorem}\label{BCT}
	Let $\mathbb{E}$ be a product system over $\mathbb{N}_0^2.$ Suppose that  $\pi$ is a representation of $\mathcal{A}$ on a Hilbert space $\mathcal{W}.$ Let $(\rho, S)$  be the induced representation of $E$ induced by $\pi$ and $(\rho, V^{(2)})$ be an isometric covariant representation of $E_2$ on $\mathcal{F}(E) \ot \mathcal{W}.$ The following conditions are equivalent:  
	\begin{enumerate}
		\item There exists an isometric covariant representation $(\rho,V^{(1)} )$ of $E_1$ on  $\mathcal{F}(E) \ot \mathcal W$ such that $(\rho,V^{(1)}, V^{(2)} )$ satisfies (\ref{rep}) and  $\wV^{(1)} (I_{E_1} \ot \wV^{(2)})=\wt S.$
		\item   There exist a closed subspce $\mathcal{L}$ of $\mathcal{W},$ $P:\mathcal{W}\to \mathcal{W}$ is an orthogonal projection onto $\mathcal{L}$ and unitary isomorphisms $U_{\mathcal{L}}:\mathcal{L} \to E_1 \ot P\mathcal{W}$ and  $U:E_2\ot \mathcal{W}\to P^{\perp}{\mathcal{W}}\oplus E_2\ot E_1\ot P\mathcal{W}$ such that   $U|_{E_2 \otimes \mathcal{L}}=I_{E_2} \otimes U_{\mathcal{L}}$ and $V^{(2)}=M_{\Theta_2},$ where $\widetilde{\Theta}_2=(P^{\perp}\oplus \widetilde{S}(t_{2,1}\ot P))U.$
	\end{enumerate} 
\end{theorem}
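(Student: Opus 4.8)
The plan is to route both implications through the explicit BCL-representation of Theorem \ref{K2} together with the intertwining identity recorded in Remark \ref{K3}, which together recast the problem of ``completing'' $V^{(2)}$ to a commuting pair into a condition on the single coefficient-space unitary $U$. The key preliminary observation is that the ambient space is the Fock module $\mathcal{F}(E)\otimes\mathcal{W}$ and the prescribed product is $\widetilde{S}$; hence any pair satisfying (1) is automatically pure with joint wandering subspace $\mathcal{W}$, and its Wold--von Neumann transform $\Pi_V$ is the identity. By (\ref{BBB}) this means the BCL data attach directly to $(\rho,V^{(1)},V^{(2)})$, so that $V^{(2)}=M_{\Theta_2}$ is literally the representation supplied by Theorem \ref{K2}, with no further passage to a model required.

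For the implication (1)$\Rightarrow$(2) I would begin with the pair given by (1), form the wandering subspaces $\mathcal{W}_1,\mathcal{W}_2$, and use (\ref{WWW}) to split $\mathcal{W}$ in the two canonical ways. Theorem \ref{K2} then exhibits $\widetilde{\Theta}_2=(P^{\perp}\oplus\widetilde{S}(t_{2,1}\otimes P))U$ with the explicit block unitary $U$ displayed there. The crucial feature is that one diagonal block of $U$ already has the tensor form $I_{E_2}\otimes\widetilde{V}^{(1)*}$; reading off the summand $\mathcal{L}$ of $\mathcal{W}$ on which this block acts, the corresponding projection $P$, and the restriction $U_{\mathcal{L}}:=\widetilde{V}^{(1)*}|_{\mathcal{L}}$ produces a candidate quadruple, and the required identity $U|_{E_2\otimes\mathcal{L}}=I_{E_2}\otimes U_{\mathcal{L}}$ is then exactly the assertion that $V^{(1)}$ creates in the $E_1$-direction, i.e. that $\widetilde{V}^{(1)}$ identifies $E_1\otimes P\mathcal{W}$ isometrically with $\mathcal{L}$. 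Matching the three objects $\mathcal{L}$, $P$ and $U_{\mathcal{L}}$ consistently against the decomposition (\ref{WWW}) -- so that $P$ projects onto $\mathcal{L}$ and $U_{\mathcal{L}}$ genuinely maps into $E_1\otimes P\mathcal{W}$ -- is the delicate part of this direction.

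For (2)$\Rightarrow$(1) I would run the construction in reverse. Given $(\mathcal{L},P,U_{\mathcal{L}},U)$ with $V^{(2)}=M_{\Theta_2}$, I define a candidate $\widetilde{\Theta}_1$ through the relation of Remark \ref{K3} (with the given $P$ in place of $P_{\mathcal{W}_2}$), namely via $U^{*}$ and the splitting $I_{E_1}\otimes P^{\perp}\oplus I_{E_2\otimes E_1}\otimes P$, and set $V^{(1)}:=M_{\Theta_1}$. First I would verify, using (\ref{I_1}), that $\Theta_1$ is a well-defined isometric bi-module map $E_1\to B(\mathcal{W},\mathcal{F}(E)\otimes\mathcal{W})$: here the hypothesis $U|_{E_2\otimes\mathcal{L}}=I_{E_2}\otimes U_{\mathcal{L}}$ is precisely what makes the $E_1$- and $E_2$-creation operators fit together and guarantees that $\widetilde{\Theta}_1$ intertwines the left $\mathcal{A}$-actions. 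By Lemma \ref{L_1} the triples $(\rho,S,M_{\Theta_1})$ and $(\rho,S,M_{\Theta_2})$ are c.b.c. representations, and a direct computation with the identity of Remark \ref{K3} then yields both $\widetilde{M}_{\Theta_1}(I_{E_1}\otimes\widetilde{M}_{\Theta_2})=\widetilde{S}$ and the commutation relation (\ref{rep}); transporting back through Theorem \ref{L6} produces the desired isometric covariant representation $V^{(1)}$ of $E_1$ with $\widetilde{V}^{(1)}(I_{E_1}\otimes\widetilde{V}^{(2)})=\widetilde{S}$.

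The main obstacle, in both directions, is the module bookkeeping around the flip $t_{2,1}$ and the tensor slots of $E_1$: one must show that the single compatibility condition $U|_{E_2\otimes\mathcal{L}}=I_{E_2}\otimes U_{\mathcal{L}}$ is equivalent to the \emph{full} commutation relation (\ref{rep}) for the reconstructed pair. Concretely, the $E_1$-slot of the would-be $V^{(1)}$ can be pulled through $V^{(2)}$ via $t_{2,1}$ exactly when $U$ respects the splitting $\mathcal{W}=P^{\perp}\mathcal{W}\oplus\mathcal{L}$ in the diagonal manner encoded by $U_{\mathcal{L}}$. I expect the cleanest route is to express everything in terms of $\widetilde{\Theta}_1,\widetilde{\Theta}_2$ and reduce the commutation identity to Remark \ref{K3}, so that the entire verification collapses to a single check on the coefficient space $\mathcal{W}$ rather than on the whole Fock module.
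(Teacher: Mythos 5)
Your proposal is correct, and for (1)$\Rightarrow$(2) it is literally the paper's proof: the paper disposes of that direction with the single sentence that it follows from Theorem \ref{K2}, which is exactly your plan of reading $\mathcal{L}$, $P$, $U_{\mathcal{L}}$ off the block structure of $U$. For (2)$\Rightarrow$(1) your organization differs from the paper's in a way worth recording. The paper builds $\widetilde{M}_{\Theta_2}$ from $\widetilde{\Theta}_2$, observes that its range equals $P^{\perp}\mathcal{W}\oplus E\otimes \mathcal{F}(E)\otimes\mathcal{W}$, and then defines $\widetilde{V}^{(1)}$ \emph{globally} by a two-case formula: $\widetilde{V}^{(1)}=\widetilde{S}(I_{E_1}\otimes \widetilde{M}_{\Theta_2}^{*})$ on $E_1\otimes R(\widetilde{M}_{\Theta_2})$ and $\widetilde{V}^{(1)}=U_{\mathcal{L}}^{*}$ on $E_1\otimes P\mathcal{W}$; only at the very end is Lemma \ref{L_1} invoked to write $V^{(1)}=M_{\Theta_1}$. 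That ordering makes the identity $\widetilde{V}^{(1)}(I_{E_1}\otimes\widetilde{M}_{\Theta_2})=\widetilde{S}$ tautological (it reduces to $\widetilde{M}_{\Theta_2}^{*}\widetilde{M}_{\Theta_2}=I$), and the hypothesis $U|_{E_2\otimes\mathcal{L}}=I_{E_2}\otimes U_{\mathcal{L}}$ is consumed in exactly one place, namely the remaining check $\widetilde{M}_{\Theta_2}(I_{E_2}\otimes\widetilde{V}^{(1)})(t_{1,2}\otimes I)=\widetilde{S}$. You instead go coefficient-first: define $\widetilde{\Theta}_1$ on $E_1\otimes\mathcal{W}$ from $U^{*}$ and $U_{\mathcal{L}}^{*}$ via the splitting of Remark \ref{K3}, and set $V^{(1)}=M_{\Theta_1}$. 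This produces the same operator (its symbol agrees with the restriction of the paper's $V^{(1)}$ to $\mathcal{W}$, so the uniqueness clause of Lemma \ref{L_1} identifies the two), but it postpones all of the work: both product identities must then be propagated through the Fock module with the flips $t_{2,1}^{(1,n)}$, whereas the paper gets one of them for free. What your route buys is uniformity — everything stays inside the multiplier calculus of Lemma \ref{L_1}, Theorem \ref{K2} and Remark \ref{K3}, and the verification does collapse, as you anticipate, to a computation in low Fock degrees.

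Two cautions. First, what you call the delicate part of (1)$\Rightarrow$(2) — matching the data ``so that $P$ projects onto $\mathcal{L}$'' — should not be attempted, because it is impossible in general: Theorem \ref{K2} naturally gives $P\mathcal{W}=\mathcal{W}_2$ while $\mathcal{L}=\widetilde{V}^{(1)}(E_1\otimes\mathcal{W}_2)$, and these are distinct subspaces of $\mathcal{W}$; forcing $P\mathcal{W}=\mathcal{L}$ would amount to $\widetilde{V}^{(1)}(E_1\otimes\mathcal{W}_2)=\mathcal{W}_2$. The phrase ``projection onto $\mathcal{L}$'' in the statement is a slip: the paper's own proof of (2)$\Rightarrow$(1) never uses $P\mathcal{W}=\mathcal{L}$ and treats $P$ and $\mathcal{L}$ as independent data, linked only through the unitary $U_{\mathcal{L}}:\mathcal{L}\to E_1\otimes P\mathcal{W}$ and the restriction condition. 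Second, the hypothesis $U|_{E_2\otimes\mathcal{L}}=I_{E_2}\otimes U_{\mathcal{L}}$ is not what makes $\widetilde{\Theta}_1$ well defined or a bi-module map — the two blocks $\widetilde{S}(I_{E_1}\otimes U^{*}|_{P^{\perp}\mathcal{W}})$ and $U_{\mathcal{L}}^{*}$ accomplish that on their own, since $U$ and $U_{\mathcal{L}}$ are separately unitaries of correspondences; it is needed precisely, and only, for the commutation relation (\ref{rep}), as your final paragraph correctly asserts. With those two points adjusted, your plan goes through.
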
 
\begin{proof}
	$(2) \Rightarrow (1)$ Suppose that $P:\mathcal{W}\to \mathcal{W}$ is an orthogonal projection onto $\mathcal{L}$ and $U:E_2\ot \mathcal{W}\to P^{\perp}{\mathcal{W}}\oplus E_2\ot E_1\ot P\mathcal{W}$ is a unitary isomorphism. Define a bounded linear map $\widetilde{\Theta}_2:E_2\ot \mathcal{W}\to \mathcal{F}(E)\ot \mathcal{W}$ by \begin{align}
		\widetilde{\Theta}_2=(P^{\perp}\oplus \widetilde{S}(t_{2,1}\ot P))U,
	\end{align} then it is easy to verify that $\widetilde{\Theta}_2$ is isometry and $\widetilde{\Theta}_2(\phi_2(a) \ot I_{ \mathcal{W}})=\rho(a)\widetilde{\Theta}_2,$ where $\phi_2$ is the left action of $\mathcal{A}$ on $E_2$ and $a \in \mathcal{A}.$ Define a corresponding bounded linear map $\widetilde{M}_{\Theta_2} :E_2 \ot \mathcal{F}(E) \otimes_{\pi} \mathcal{W} \longrightarrow \mathcal{F}(E) \otimes_{\pi} \mathcal{W}$ by $\widetilde{M}_{\Theta_2}=\sum_{n \in \mathbb{N}_0}\widetilde{S}_n(I_{E^{\otimes n}} \otimes \widetilde{\Theta}_2)(t_{2,\bf{1}}^{(1,n)} \otimes I_{\mathcal{W}}),$ and it satisfies $\widetilde{M}_{\Theta_2}(\phi_2(a) \ot I_{\mathcal{F}(E) \ot \mathcal{W}})=\rho(a)\widetilde{M}_{\Theta_2}$ for $a \in \mathcal{A}.$ Clearly, $\widetilde{M}_{\Theta_2}|_{E_2\ot \mathcal{W}}=\widetilde{\Theta}_2, ker \widetilde{M}^*_{\Theta_2}=P\mathcal{W}$ and  $(\rho,S,M_{{\Theta}_2})$ is an isometric covariant representation of the product system  over $\mathbb{N}_0^2$ determined by $\{E, E_2\}$ on $\mathcal{F}(E)\ot \mathcal{W}.$ 
	Suppose that there exist a  closed  subspace $\mathcal{L} $ of $\mathcal{W}$ and a unitary isomorphism $U_{\mathcal{L}}: \mathcal{L} \to E_1 \otimes P\mathcal{W} $ such that $U|_{E_2 \otimes \mathcal{L}}=I_{E_2} \otimes U_{\mathcal{L}}.$  Since the range of $\widetilde{M}_{\Theta_2}$ equals $P^{\perp}{\mathcal{W}}  \oplus E \ot \mathcal{F}(E)\ot \mathcal{W},$ we
	define an isometric map  $\widetilde{V}^{(1)} :E_1 \ot \mathcal{F}(E) \otimes \mathcal{W} \rightarrow  \mathcal{F}(E) \otimes \mathcal{W}$ by
	$$\widetilde{V}^{(1)}(\xi \ot h) = \begin{cases}   \widetilde{S}(I_{E_1}\ot \widetilde{M}_{{\Theta}_2}^*)(\xi \ot h) &   \text{if }  h \in  P^{\perp}{\mathcal{W}}  \oplus E \ot \mathcal{F}(E)\ot \mathcal{W} \\
		U_{\mathcal{L}}^*(\xi \otimes h)&   \text{if }  h \in P\mathcal{W},
	\end{cases}$$
	for $\xi \in E_1$ and it satisfies 
	$\widetilde{V}^{(1)}(\phi_1(a) \otimes I_{ \mathcal{F}(E)\ot \mathcal{W}})=\rho(a)\widetilde{V}^{(1)},$ where $\phi_1$ is the left action of $\mathcal{A}$ on $E_1$ and $a \in \mathcal{A}.$ Then by  Lemma \ref{MSC},  $(\rho,V^{(1)})$ is an isometric covariant representation of $E_1$ on $\mathcal{F}(E) \otimes \mathcal{W},$  where ${V}^{(1)} :E_1 \to  B(\mathcal{F}(E) \otimes \mathcal{W})$ is defined by   $V^{(1)}(\xi)h=\widetilde{V}^{(1)}(\xi \ot h)$ for all $\xi \in E_1, h\in \mathcal{F}(E) \otimes \mathcal{W}.$
	It is easy to verify that $\widetilde{S}=\wV^{(1)}(I_{E_1}\ot \widetilde{M}_{{\Theta}_2})=\widetilde{M}_{{\Theta}_2}(I_{E_2}\ot \widetilde{V}^{(1)})(t_{1,2}\ot I_{\mathcal{F}(E)\ot \mathcal{W}}),$ that is, $(\rho,V^{(1)},{M}_{{\Theta}_2})$ is an isometric covariant representation of $\mathbb{E}$ on $ \mathcal{F}(E)\ot \mathcal{W}.$ It follows that $(\rho,S,V^{(1)})$ is an isometric covariant representation of the product system  over $\mathbb{N}_0^2$ determined by  $\{E,E_1\}$ on $ \mathcal{F}(E)\ot \mathcal{W},$ then by Lemma \ref{L_1} there exists an isometric bi-module map $\Theta_1: E_1 \longrightarrow  B(\mathcal{W}, \mathcal{F}(E) \otimes  \mathcal{W}) $ such that $V^{(1)}={M}_{{\Theta}_1}.$
	
	$(1) \Rightarrow (2)$ Follows from Theorem \ref{K2}.
\end{proof}

\begin{remark}
	(1) From the above theorem, $\Theta_1$ satisfies   $(I_{E_2},I_{\mathcal{A}})\ot \widetilde{\Theta}_1=( (I_{E_1},I_{\mathcal{A}})\ot U^*)( I_{E_1}\ot P^{\perp} \oplus  I_{E_2 \ot E_1}\ot P), $ where $(I_{E_1},I_{\mathcal{A}})\ot U=I_{E_1} \ot \widetilde{\Theta}_2|_{E_2 \ot \mathcal{W}_1} \oplus  (I_{E_2}\ot \widetilde{M}^*_{\Theta_1}|_{U_{\mathcal{L}}^{*}(E_1 \ot P\mathcal{W})}) $ is a unitary isomorphism and  $\mathcal{W}_1=  ker \widetilde{M}^*_{\Theta_1}.$
	
	(2) Let $(\sigma, V^{(2)},V)$ be an isometric covariant representation of the product system determined by $\{E_2, E\}$ on a Hilbert space $\mathcal{H}$  such that $(\sigma,V)$  is pure. Consider  the induced representation  $(\rho, S)$  of $E$ induced by $\sigma|_{\mathcal{W}},$ where $\mathcal{W}=ker \wt V^*.$ Then by (\ref{BBB}),  $(\rho, M_{\Theta_2}, S)$ is an isometric covariant representation which is isomorphic to $(\sigma, V^{(2)},V).$  Assume statement (2) of Theorem \ref{BCT}, then we get the BCL-representation $(\rho, M_{\Theta_1},M_{\Theta_2})$ of the product system $\mathbb{E}$  as in Theorem \ref{K2}.  Now,  define an isometric covariant representaion $(\sigma, V^{(1)})$ of $E_1$ on $\mathcal{H}$ by 
	$V^{(1)}(\xi)=\Pi^*_{V}M_{\Theta_1}(\xi)\Pi_V,$ then
	it is easy to see that  $(\sigma, V^{(1)},V^{(2)})$
	is  the pure  isometric covariant representaion  of the product system $\mathbb{E}$ on the Hilbert space $\mathcal{H}$ such that $\wV^{(1)} (I_{E_1} \ot \wV^{(2)})=\wt V.$
\end{remark}

\section{Characterization of invariant subspaces for a pure isometric covariant representation of a product system over $\mathbb{N}_0^{2}$}

In this section, we characterize invariant subspaces for a pure isometric covariant representation of product system over $\mathbb{N}_0^{2}$ on a Hilbert space $\mathcal{H}$ (Theorem \ref{DDDD1}), and the BCL-representation is compared with other canonical multi-analytic description of the representations (Theorem \ref{L8}).

Let $(\sigma , V^{(1)},V^{(2)})$ be an isometric covariant representation of $\mathbb{E}$ on $\mathcal{H}.$ Since the   subspaces $\mathcal{W}_i$ and $\mathcal{W}$ are $\sigma$-invariant, let $(\rho_i, S^{\mathcal{W}_i} )$ and $(\rho, S)$ be the induced representaions of $E_i$ and $E$ induced by  the reprsentations $\sigma|_{\mathcal{W}_i}$ and $\sigma|_{\mathcal{W}},$ respectively. Suppose that $(\sigma, V^{(i)})$ is pure, for some $1\leq i \leq 2$ with $i \neq j,$ then define a bi-module map  $V'_j: E_j \rightarrow B(\mathcal{F}({E_i}) \otimes \mathcal{W}_i)$ by $V'_j(\xi_j)=\Pi_{V^{(i)}}V^{(j)}(\xi_j)\Pi^*_{V^{(i)}}, \xi_j \in E_j,$ where $\Pi_{V^{(i)}}$ is the Wold-von Neumann decomposition for $(\sigma, V^{(i)}).$   Then $(\rho_i, V'_j)$ is an isometric covariant representation and by Theorem  \ref{L6}, $V'_j=M_{\Theta_{{V}^{(j)}}}.$ The following theorem analyzes the isometric covariant representation $(\sigma, V^{(1)}, V^{(2)})$ such that  $(\sigma , V^{(i)})$ is pure for some $1 \leq i \leq 2.$

\begin{theorem}\label{L8}
	Suppose that $(\sigma , V^{(1)},V^{(2)})$ is an isometric covariant representation of $\mathbb{E}$ on  $\mathcal{H}$ such that $(\sigma , V^{(i)})$ is pure, for some $1 \leq i \leq 2.$  Then  for $1 \leq j \leq 2$ with $i \neq j,$ the BCL-representation  $(\rho,M_{{\Theta_1}}, M_{{\Theta_2}})$ of  $(\sigma , V^{(1)},V^{(2)})$  is isomorphic (say the unitary isomorphism $\Pi_i$) to  $(\rho_i, S^{\mathcal{W}_i},M_{\Theta_{{V}^{(j)}}})$
	and 
	$$\Pi_{i}\widetilde{S}= \widetilde{S}^{\mathcal{W}_i}(I_{E_i}\ot \widetilde{M}_{\Theta_{{V}^{(j)}}})(\widetilde{t}_{j,i} \ot \Pi_i),$$
	where $\widetilde{t}_{2,1}=I_E$ and $\widetilde{t}_{1,2}=t_{1,2}.$  In particular,  if  for each   $1 \leq i \leq 2,$ $(\sigma , V^{(i)})$ is  pure,  then $(\rho_i, S^{\mathcal{W}_i},M_{\Theta_{{V}^{(j)}}})$ is isomorphic to $(\rho_j, S^{\mathcal{W}_j},M_{\Theta_{{V}^{(i)}}}).$ 
	
\end{theorem}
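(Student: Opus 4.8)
The plan is to realize both $(\rho,M_{\Theta_1},M_{\Theta_2})$ and $(\rho_i,S^{\mathcal{W}_i},M_{\Theta_{V^{(j)}}})$ as unitarily equivalent copies of the ambient representation $(\sigma,V^{(1)},V^{(2)})$, and then to compose the two intertwiners. A preliminary point is that the BCL-representation is available at all: since the range of $\widetilde{V}_n$ is contained in that of $\widetilde{V}^{(i)}_n$ (a consequence of $\widetilde{V}=\widetilde{V}^{(1)}(I_{E_1}\ot\widetilde{V}^{(2)})$ together with the commutation relation $(\ref{rep})$), the projection $\widetilde{V}_n\widetilde{V}_n^*$ is dominated by $\widetilde{V}^{(i)}_n\widetilde{V}^{(i)*}_n$; hence purity of $(\sigma,V^{(i)})$ forces purity of $(\sigma,V)$, the Wold--von Neumann decomposition $\Pi_V$ exists, and by $(\ref{BBB})$ it satisfies $\Pi_V V^{(k)}(\xi)\Pi_V^*=M_{\Theta_k}(\xi)$ together with $\Pi_V\widetilde{V}=\widetilde{S}(I_E\ot\Pi_V)$ and $\Pi_V\sigma(a)\Pi_V^*=\rho(a)$. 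Purity of $(\sigma,V^{(i)})$ also makes $\Pi_{V^{(i)}}\colon\mathcal{H}\to\mathcal{F}(E_i)\ot\mathcal{W}_i$ a unitary carrying $\sigma$ to $\rho_i$, $V^{(i)}$ to $S^{\mathcal{W}_i}$, and, by the definition of $V'_j$ and Theorem \ref{L6}, $V^{(j)}$ to $M_{\Theta_{V^{(j)}}}$.

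Next I would define the unitary $\Pi_i:=\Pi_{V^{(i)}}\Pi_V^*\colon\mathcal{F}(E)\ot\mathcal{W}\to\mathcal{F}(E_i)\ot\mathcal{W}_i$ and conjugate the generators of the BCL-representation. Using $\Pi_V^*M_{\Theta_k}(\xi)\Pi_V=V^{(k)}(\xi)$ one gets $\Pi_i M_{\Theta_k}(\xi)\Pi_i^*=\Pi_{V^{(i)}}V^{(k)}(\xi)\Pi_{V^{(i)}}^*$, which equals $S^{\mathcal{W}_i}(\xi)$ when $k=i$ and $M_{\Theta_{V^{(j)}}}(\xi)$ when $k=j$; likewise $\Pi_i\rho(a)\Pi_i^*=\rho_i(a)$. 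This already shows that $(\rho,M_{\Theta_1},M_{\Theta_2})$ and $(\rho_i,S^{\mathcal{W}_i},M_{\Theta_{V^{(j)}}})$ are isomorphic through $\Pi_i$.

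For the identity involving $\widetilde{S}$, I would start from $\widetilde{S}=\Pi_V\widetilde{V}(I_E\ot\Pi_V^*)$, so that $\Pi_i\widetilde{S}=\Pi_{V^{(i)}}\widetilde{V}(I_E\ot\Pi_V^*)$, and then expand $\widetilde{V}$ so as to expose $V^{(i)}$ outermost. For $i=1$ I would use $\widetilde{V}=\widetilde{V}^{(1)}(I_{E_1}\ot\widetilde{V}^{(2)})$ directly; for $i=2$ I would first invoke the commutation relation $(\ref{rep})$ in the form $\widetilde{V}=\widetilde{V}^{(2)}(I_{E_2}\ot\widetilde{V}^{(1)})(t_{1,2}\ot I_{\mathcal{H}})$. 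Pushing $\Pi_{V^{(i)}}$ through with $\Pi_{V^{(i)}}\widetilde{V}^{(i)}=\widetilde{S}^{\mathcal{W}_i}(I_{E_i}\ot\Pi_{V^{(i)}})$ and $\Pi_{V^{(i)}}\widetilde{V}^{(j)}(I_{E_j}\ot\Pi_{V^{(i)}}^*)=\widetilde{M}_{\Theta_{V^{(j)}}}$, and recombining $\Pi_{V^{(i)}}\Pi_V^*=\Pi_i$ on the innermost leg, produces exactly
\[
\Pi_i\widetilde{S}=\widetilde{S}^{\mathcal{W}_i}(I_{E_i}\ot\widetilde{M}_{\Theta_{V^{(j)}}})(\widetilde{t}_{j,i}\ot\Pi_i),
\]
with $\widetilde{t}_{2,1}=I_E$ in the case $i=1$ and the genuine flip $\widetilde{t}_{1,2}=t_{1,2}$ emerging from the single commutation step when $i=2$.

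The last assertion is then immediate: when $(\sigma,V^{(1)})$ and $(\sigma,V^{(2)})$ are both pure, applying the established equivalence once with $i=1$ and once with $i=2$ and composing the resulting unitaries (namely $\Pi_j\Pi_i^*$) shows that $(\rho_i,S^{\mathcal{W}_i},M_{\Theta_{V^{(j)}}})$ and $(\rho_j,S^{\mathcal{W}_j},M_{\Theta_{V^{(i)}}})$ are isomorphic. I expect the only delicate point to be the bookkeeping of the tensor-leg identifications and the flip maps $t_{i,j}$ in the $\widetilde{S}$-computation, where the asymmetry between $i=1$ and $i=2$ must be produced precisely by the single use of $(\ref{rep})$; the equivalence of the representations themselves is a routine composition of the two Wold--von Neumann intertwiners.
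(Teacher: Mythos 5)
Your proposal is correct and follows essentially the same route as the paper: you define the same intertwining unitary $\Pi_i = \Pi_{V^{(i)}}\Pi_V^*$, conjugate the generators using the Wold--von Neumann intertwiners and Theorem \ref{L6} exactly as the paper does, derive the $\widetilde{S}$-identity by exposing $\widetilde{V}^{(i)}$ outermost (with the single use of the commutation relation producing $t_{1,2}$ in the $i=2$ case), and obtain the final claim by composing $\Pi_j\Pi_i^*$, just as in the paper. The only addition is your explicit justification that purity of $(\sigma,V^{(i)})$ forces purity of $(\sigma,V)$, which the paper asserts without proof.
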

\begin{proof}
	Fix $1 \leq i \leq 2$ and suppose that $(\sigma , V^{(i)})$ is a pure isometric covariant representation, then $(\sigma , V^{(1)},V^{(2)})$ is also a pure isometric covariant representation. From Theorem \ref{L6}, there exists an isometric covariant representaion $(\rho_i, M_{\Theta_{{V}^{(j)}}})$ of $E_j$ on $\mathcal{F}({E_i}) \otimes \mathcal{W}_i$ such that
	\begin{equation}\label{L7}
		\rho_i(a)\Pi_{V^{(i)}}=\Pi_{V^{(i)}}\sigma(a) \:\:\mbox{and}\:\:\Pi_{V^{(i)}}\wV^{(j)} = \widetilde{M}_{\Theta_{{V}^{(j)}}}(I_{E_j} \ot \Pi_{V^{(i)}}), 
	\end{equation} where 
	${\Theta_{{V}^{(j)}}}(\xi_j)=\sum_{n \in \mathbb{N}_0}\widetilde{S}^{\mathcal{W}_i}_{ n}(I_{E_i^{\otimes n}} \otimes P_{\mathcal{W}_i})\widetilde{V}_{n}^{(i)^*}V^{(j)}(\xi_j), $ for $\xi_j \in E_j$ and  $1 \leq j \leq 2$ with $i \neq j.$

	Let  $(\rho,M_{{\Theta_1}}, M_{{\Theta_2}})$ be the BCL-representation of $(\sigma , V^{(1)},V^{(2)}),$
	then $\Pi_V \wV^{(i)}=\widetilde{M}_{\Theta_i}(I_{E_i} \ot \Pi_V)$ for all $i\in \{1,2\}.$  Define a unitary $\Pi_{i} :  \mathcal{F}({E}) \otimes \mathcal{W} \to \mathcal{F}({E_i}) \otimes \mathcal{W}_i $ by  $\Pi_{i}=\Pi_{V^{(i)}}\Pi_V^* ,$ then 
	$\Pi_i$ satisfies the relations 
	\begin{align*}
		\Pi_{i}\widetilde{M}_{\Theta_i}&=\Pi_{V^{(i)}}\Pi_V^*\widetilde{M}_{\Theta_i}=\Pi_{V^{(i)}}\wV^{(i)}(I_{E_i} \ot \Pi_V^*)=\widetilde{S}^{\mathcal{W}_i}(I_{E_i}\ot  \Pi_{V^{(i)}}\Pi_V^*)=\widetilde{S}^{\mathcal{W}_i}(I_{E_i}\ot  \Pi_{i}) \:\: 
	\end{align*}
	and 	$\Pi_{i}\rho(a)=\Pi_{V^{(i)}}\sigma(a)\Pi_V^*=\rho_i(a)\Pi_{i}.$ Also, using Equation (\ref{L7}), we get 
	\begin{align*}
		\Pi_{i}\widetilde{M}_{\Theta_j}&=\Pi_{V^{(i)}}\Pi_V^*\widetilde{M}_{\Theta_j}=\Pi_{V^{(i)}}\wV^{(j)}(I_{E_j} \ot \Pi_V^*)=\widetilde{M}_{\Theta_{{V}^{(j)}}}(I_{E_j}\ot  \Pi_{V^{(i)}}\Pi_V^*)=\widetilde{M}_{\Theta_{{V}^{(j)}}}(I_{E_j}\ot \Pi_{i}).
	\end{align*}
	This shows that  the BCL-representation  $(\rho,M_{{\Theta_1}}, M_{{\Theta_2}})$  is isomorphic to  $(\rho_i, S^{\mathcal{W}_i},M_{\Theta_{{V}^{(j)}}}), 1 \leq j \leq 2$ with $i \neq j.$
	Since $\Pi_{V^{(i)}}\wV^{(i)}=\widetilde{S}^{\mathcal{W}_i}(I_{E_i}\ot \Pi_{V^{(i)}}),$  the Equation  (\ref{L7}) gives
	\begin{align*}
		\Pi_{V^{(i)}}\wV^{(i)}(I_{E_i} \otimes \wV^{(j)})&=\widetilde{S}^{\mathcal{W}_i}(I_{E_i}\ot \Pi_{V^{(i)}}\wV^{(j)})= \widetilde{S}^{\mathcal{W}_i}(I_{E_i}\ot \widetilde{M}_{\Theta_{{V}^{(j)}}}(I_{E_j} \ot \Pi_{V^{(i)}})).
	\end{align*} Then
	$\Pi_{i}\widetilde{S}=\Pi_{V^{(i)}}\Pi_V^* \widetilde{S}=\Pi_{V^{(i)}} \wV^{(i)}(I_{E_i}\ot \wV^{(j)})(\widetilde{t}_{j,i} \ot  \Pi_V^*)= \widetilde{S}^{\mathcal{W}_i}(I_{E_i}\ot \widetilde{M}_{\Theta_{{V}^{(j)}}})(\widetilde{t}_{j,i} \ot \Pi_i),$ 
	where $\widetilde{t}_{2,1}=I_E$ and $\widetilde{t}_{1,2}=t_{1.2}.$ 
	
	Suppose that for each $1 \leq i \leq 2,$ $(\sigma, V^{(i)})$ is pure.  For $1 \leq j \leq 2$ with $i \neq j,$  define a unitary operator $\Pi_{ij}:  \mathcal{F}({E_i}) \otimes \mathcal{W}_i \rightarrow  \mathcal{F}({E_j}) \otimes \mathcal{W}_j $  by $\Pi_{ij}= \Pi_j \Pi_i^*$. Then $\Pi_{ji}=\Pi^*_{ij}$ and  it is easy to verify that  $\Pi_{ij}\rho_i(a)=\rho_j(a)\Pi_{ij}$ and $ \Pi_{ij}\widetilde{S}^{\mathcal{W}_i}=\widetilde{M}_{{\Theta_{{V}^{(i)}}}}(I_{E_i} \ot \Pi_{ij}).$  Thus  $(\rho_i, S^{\mathcal{W}_i},M_{\Theta_{{V}^{(j)}}})$ is isomorphic to $(\rho_j, S^{\mathcal{W}_j},M_{\Theta_{{V}^{(i)}}}).$ 
\end{proof}

Let $(\sigma , V^{(1)},V^{(2)})$ be a c.b.c. representation of $\mathbb{E}$ on a Hilbert space $\mathcal{H}.$ A non-zero closed subspace $\mathcal{K}$ of $\mathcal{H}$ is called  $(\sigma , V^{(1)},V^{(2)})$-{\it invariant} $(resp. (\sigma , V^{(1)},V^{(2)})$-{\it reducing}) if it is $\sigma$-invariant and (resp. both $\mathcal{K},\mathcal{K}^{\perp}$) is  invariant by each operator  $V^{(i)}(\xi_i), \xi_i \in E_i$ for all $1 \leq i \leq 2.$ We say that $(\sigma , V^{(1)},V^{(2)})$ is  {\it irreducible} if there doesn't exist a non-zero reducing subspace for $(\sigma , V^{(1)},V^{(2)}).$

The following theorem proves a characterization of reducing subspaces for the BCL-repres-\\entation and BCL-triple $(P_{\mathcal{W}_2},U,\mathcal{W})$ (see Theorem \ref{K2} ), which is a generalization of \cite[Lemma 2.1]{DSSS22}.

\begin{theorem}
	Suppose that $(\sigma , V^{(1)},V^{(2)})$ is a pure  isometric covariant representation of $\mathbb{E}$ on $\mathcal{H}$ and $(\rho,M_{{\Theta_1}}, M_{{\Theta_2}})$ is the BCL-representation of $(\sigma , V^{(1)},V^{(2)}).$
	Let $(P_{\mathcal{W}_2},U,\mathcal{W})$ be the BCL-triple and $\mathcal{M} \subseteq \mathcal{F}(E) \otimes \mathcal{W}$ be a closed subspace. Then $\mathcal{M}$ reduces $(\rho, M_{\Theta_1}, M_{\Theta_2})$ if and only if there exists a closed subspace $\mathcal{W'} \subseteq \mathcal{W}$ which is $P_{\mathcal{W}_2}$-invariant  such that $U|_{E_2\ot \mathcal{W}'}:{E_2\ot \mathcal{W}'}\to P_{\mathcal{W}_2}^{\perp} \mathcal{W}'\oplus E_2 \ot E_1 \ot P_{\mathcal{W}_2}\mathcal{W}'$ is a unitary isomorphism and  $$\mathcal{M} = \mathcal{F}(E) \otimes \mathcal{W}'.$$
\end{theorem}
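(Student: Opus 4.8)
The plan is to prove both implications by exploiting the structure of the BCL-representation, specifically the fact that $\widetilde{M}_{\Theta_2}$ is built up from $\widetilde{\Theta}_2 = (P_{\mathcal{W}_2}^{\perp} \oplus \widetilde{S}(t_{2,1}\ot P_{\mathcal{W}_2}))U$ via the formula $\widetilde{M}_{\Theta_2}=\sum_{n}\widetilde{S}_n(I_{E^{\ot n}}\ot \widetilde{\Theta}_2)(t_{2,\mathbf{1}}^{(1,n)}\ot I_{\mathcal{W}})$, together with the defining intertwining relations from Theorem \ref{K2} and Theorem \ref{L6}. For the reverse direction, I would start from a $P_{\mathcal{W}_2}$-invariant closed subspace $\mathcal{W}'\subseteq \mathcal{W}$ for which $U|_{E_2\ot \mathcal{W}'}$ restricts to the asserted unitary isomorphism onto $P_{\mathcal{W}_2}^{\perp}\mathcal{W}'\oplus E_2\ot E_1\ot P_{\mathcal{W}_2}\mathcal{W}'$, set $\mathcal{M}=\mathcal{F}(E)\ot \mathcal{W}'$, and verify directly that $\mathcal{M}$ is $\rho$-invariant (immediate, since $\rho(a)=\phi_\infty(a)\ot I_{\mathcal{W}}$ acts blockwise) and that $\mathcal{M}$ reduces each $M_{\Theta_i}$.

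The key computation is showing $\widetilde{M}_{\Theta_2}(E_2\ot \mathcal{M})\subseteq \mathcal{M}$ and $\widetilde{M}_{\Theta_2}^*(\mathcal{M})\subseteq E_2\ot \mathcal{M}$. For invariance I would feed $\xi\ot s\ot w'$ with $w'\in\mathcal{W}'$ through the summation formula: the condition $U(E_2\ot\mathcal{W}')\subseteq P_{\mathcal{W}_2}^{\perp}\mathcal{W}'\oplus E_2\ot E_1\ot P_{\mathcal{W}_2}\mathcal{W}'$ guarantees that $\widetilde{\Theta}_2(E_2\ot\mathcal{W}')\subseteq \mathcal{W}'\oplus E\ot\mathcal{W}'$, and since each $\widetilde{S}_n$ only tensors on the left with factors from $\mathcal{F}(E)$, the whole sum lands in $\mathcal{F}(E)\ot \mathcal{W}'=\mathcal{M}$. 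That $U|_{E_2\ot\mathcal{W}'}$ is onto that precise subspace is what forces $\mathcal{M}^{\perp}=\mathcal{F}(E)\ot(\mathcal{W}\ominus\mathcal{W}')$ to be invariant as well, giving the reducing property for $M_{\Theta_2}$; the statement for $M_{\Theta_1}$ then follows from the relation $(I_{E_2},I_{\mathcal{A}})\ot\widetilde{\Theta}_1=((I_{E_1},I_{\mathcal{A}})\ot U^*)(I_{E_1}\ot P_{\mathcal{W}_2}^{\perp}\oplus I_{E_2\ot E_1}\ot P_{\mathcal{W}_2})$ established in Remark \ref{K3}, which ties $\Theta_1$ back to the same data $(P_{\mathcal{W}_2},U,\mathcal{W})$.

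For the forward direction, assume $\mathcal{M}$ reduces $(\rho,M_{\Theta_1},M_{\Theta_2})$. The first step is to show $\mathcal{M}$ has the form $\mathcal{F}(E)\ot\mathcal{W}'$ for some closed $\mathcal{W}'\subseteq\mathcal{W}$: since $\mathcal{M}$ reduces the induced shift $S$ (because $\widetilde{S}=\widetilde{M}_{\Theta_1}(I_{E_1}\ot\widetilde{M}_{\Theta_2})$, so reducing both $M_{\Theta_i}$ forces reducing $S$), I would invoke the characterization of reducing subspaces of an induced representation — essentially the same rigidity used via \cite[Proposition 4.3]{TV21} in the preceding classification theorem — to write $\mathcal{M}=\mathcal{F}(E)\ot\mathcal{W}'$ with $\mathcal{W}'=\mathcal{M}\cap\mathcal{W}$. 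The second step is to read off the structural conditions on $\mathcal{W}'$: reducibility under $M_{\Theta_2}$ restricted to the first level $E_2\ot\mathcal{W}'$ forces $\widetilde{\Theta}_2(E_2\ot\mathcal{W}')\subseteq\mathcal{M}$ and its orthocomplement analogue, and unwinding the block form of $\widetilde{\Theta}_2$ yields both the $P_{\mathcal{W}_2}$-invariance of $\mathcal{W}'$ and the claim that $U|_{E_2\ot\mathcal{W}'}$ maps \emph{onto} $P_{\mathcal{W}_2}^{\perp}\mathcal{W}'\oplus E_2\ot E_1\ot P_{\mathcal{W}_2}\mathcal{W}'$.

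The main obstacle I expect is the forward-direction step that extracts the precise surjectivity and $P_{\mathcal{W}_2}$-invariance statements from the abstract hypothesis that $\mathcal{M}$ reduces $M_{\Theta_2}$. One must carefully disentangle how the reducing condition at all Fock levels collapses to a condition purely at the generating level $E_2\ot\mathcal{W}'$, and then decode the two orthogonal blocks of $\widetilde{\Theta}_2=(P_{\mathcal{W}_2}^{\perp}\oplus\widetilde{S}(t_{2,1}\ot P_{\mathcal{W}_2}))U$ so that the range condition translates exactly into the stated unitary isomorphism $U|_{E_2\ot\mathcal{W}'}:E_2\ot\mathcal{W}'\to P_{\mathcal{W}_2}^{\perp}\mathcal{W}'\oplus E_2\ot E_1\ot P_{\mathcal{W}_2}\mathcal{W}'$. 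The fact that $U$ is already a unitary isomorphism on all of $E_2\ot\mathcal{W}$ should make the restriction automatically isometric, so the real content is establishing that its range is exactly the indicated summand of $P_{\mathcal{W}_2}^{\perp}\mathcal{W}\oplus E_2\ot E_1\ot P_{\mathcal{W}_2}\mathcal{W}$; this is where invariance of both $\mathcal{M}$ and $\mathcal{M}^{\perp}$ must be used simultaneously.
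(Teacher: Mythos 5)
Your reverse direction is essentially sound and runs parallel to the paper's: the hypothesis on $U|_{E_2\ot \mathcal{W}'}$ together with $P_{\mathcal{W}_2}$-invariance of $\mathcal{W}'$ gives $\widetilde{\Theta}_2(E_2\ot \mathcal{W}')\subseteq \mathcal{W}'\oplus (E\ot \mathcal{W}')$, and the summation formula for $\widetilde{M}_{\Theta_2}$ propagates this to all Fock levels; the paper handles $M_{\Theta_1}$ via the factorization $\widetilde{S}=\widetilde{M}_{\Theta_1}(I_{E_1}\ot \widetilde{M}_{\Theta_2})$ together with $\widetilde{\Theta}_1|_{E_1\ot \mathcal{W}_2}=\widetilde{V}^{(1)}|_{E_1\ot \mathcal{W}_2}$, whereas you invoke Remark \ref{K3}; both are workable. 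The problem is the forward direction. You reduce it, correctly, to showing that reducibility under $M_{\Theta_2}$ forces (i) $P_{\mathcal{W}_2}\mathcal{W}'\subseteq \mathcal{W}'$ and (ii) that $U|_{E_2\ot\mathcal{W}'}$ maps \emph{onto} $P_{\mathcal{W}_2}^{\perp}\mathcal{W}'\oplus E_2\ot E_1\ot P_{\mathcal{W}_2}\mathcal{W}'$, but you then explicitly label this extraction ``the main obstacle I expect'' and never carry it out. That step is the entire content of this implication: unwinding the block form of $\widetilde{\Theta}_2$ only yields the containment $U(E_2\ot\mathcal{W}')\subseteq (P_{\mathcal{W}_2}^{\perp}\mathcal{W}\cap\mathcal{W}')\oplus E_2\ot E_1\ot(\mathcal{W}_2\cap\mathcal{W}')$, and passing from this to the $P_{\mathcal{W}_2}$-invariance of $\mathcal{W}'$ and to exact surjectivity requires an additional argument (for instance, playing the analogous containment for $E_2\ot(\mathcal{W}\ominus\mathcal{W}')$ against the unitarity of $U$ on all of $E_2\ot\mathcal{W}$, so that the two orthogonal ranges are forced to exhaust the codomain, and then an orthogonality argument to upgrade $\mathcal{W}_2\cap\mathcal{W}'$ to $P_{\mathcal{W}_2}\mathcal{W}'$). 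As written, your proposal contains a genuine gap precisely here.

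The paper avoids this difficulty entirely by a different idea that your outline misses: it transfers the problem back to $\mathcal{H}$. Given $\mathcal{M}=\mathcal{F}(E)\ot\mathcal{W}'$, it forms $\mathcal{K}=\bigoplus_{n\geq 0}\widetilde{V}_n(E^{\ot n}\ot\mathcal{W}')\subseteq\mathcal{H}$, notes that $\mathcal{K}$ reduces $(\sigma,V^{(1)},V^{(2)})$ because $(\sigma,V^{(1)},V^{(2)})$ is isomorphic to $(\rho,M_{\Theta_1},M_{\Theta_2})$, and then applies the wandering-subspace decomposition of Equation (\ref{WWW}) to the restricted representation $(\sigma,V^{(1)},V^{(2)})|_{\mathcal{K}}$. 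This yields at once
\begin{equation*}
\mathcal{W}'=\mathcal{W}'_{1}\oplus\widetilde{V}^{(1)}(E_1\ot\mathcal{W}'_{2})=\widetilde{V}^{(2)}(E_2\ot\mathcal{W}'_{1})\oplus\mathcal{W}'_{2},
\end{equation*}
where $\mathcal{W}'_i$ are the wandering subspaces of $(\sigma,V^{(i)})|_{\mathcal{K}}$; since by Theorem \ref{K2} the unitary $U$ acts as $\widetilde{V}^{(2)}$ on $E_2\ot\mathcal{W}_1$ and as $I_{E_2}\ot\widetilde{V}^{(1)*}$ on $E_2\ot\widetilde{V}^{(1)}(E_1\ot\mathcal{W}_2)$, both the identity $P_{\mathcal{W}_2}\mathcal{W}'=\mathcal{W}'_2\subseteq\mathcal{W}'$ and the unitarity of $U|_{E_2\ot\mathcal{W}'}$ onto $\widetilde{V}^{(2)}(E_2\ot\mathcal{W}'_1)\oplus E_2\ot E_1\ot\mathcal{W}'_2$ are immediate consequences, with no block-unwinding needed. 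To complete your proof you would either need to adopt this pull-back argument or actually supply the unitarity-plus-orthocomplement argument sketched above; the proposal in its current form establishes only the easy half of the theorem.
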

\begin{proof}
	Suppose that $\mathcal{M}$ reduces $(\rho, M_{\Theta_1}, M_{\Theta_2})$, then $\mathcal{M}$ reduces $(\rho,{S})$ (as $\widetilde{S}=	\widetilde{M}_{{\Theta}_1}(I_{E_1}\ot \widetilde{M}_{{\Theta}_2})$), and thus, there exists a closed subspace $\mathcal{W'} \subseteq \mathcal{W}$ such that $\mathcal{M} = \mathcal{F}(E) \otimes \mathcal{W}',$ where  $\mathcal{W}'$ is the generating wandering subspace for $(\rho,{S})|_{\mathcal{M}}$. Define a closed subspace  $\mathcal{K}$  of $\mathcal{H}$ by  $$\mathcal{K}=\bigoplus_{n\geq 0} \wV_n(E^{\ot n}\ot \mathcal{W}'),$$ then $\mathcal{W}'=\mathcal{K}\ominus \widetilde{V}(E\ot \mathcal{K}).$ Since $(\sigma , V^{(1)},V^{(2)})$ is isomorphic to $(\rho, M_{\Theta_1}, M_{\Theta_2}),$  $\mathcal{K}$ is reducing for $(\sigma , V^{(1)},V^{(2)}).$ It follows that $\mathcal{K}$ reduces $(\sigma,V),$ and $\mathcal{W}'$ is the generating wandering subspace for $(\sigma,V)|_{\mathcal{K}}.$ From Equation (\ref{WWW}),   $\mathcal{W}'=\mathcal{W}'_{1} \oplus \widetilde{V}^{(1)}(E_1 \otimes \mathcal{W}'_{2})=\widetilde{V}^{(2)}(E_2 \otimes \mathcal{W}'_{1}) \oplus \mathcal{W}'_{2}.$ Thus $U|_{E_2\ot \mathcal{W}'}:{E_2\ot \mathcal{W}'}\to \wV^{(2)}(E_2 \ot \mathcal{W}'_1) \oplus E_2 \ot E_1 \ot \mathcal{W}'_2$ is a unitary isomorphism and $P_{\mathcal{W}_2}\mathcal{W}'=\mathcal{W}'_2\subseteq \mathcal{W}'.$ Therefore $\mathcal{W}'$ is an invariant subspace for $P_{\mathcal{W}_2}.$ 
	
	Conversely, suppose $\mathcal{W}'$ is a closed subspace of $\mathcal{W}$ such that $\mathcal{M} = \mathcal{F}(E) \otimes \mathcal{W}',$ $\mathcal{W}'$ is invariant for $P_{\mathcal{W}_2}$ and $U|_{E_2\ot \mathcal{W}'}:{E_2\ot \mathcal{W}'}\to P_{\mathcal{W}_2}^{\perp} \mathcal{W}'\oplus E_2 \ot E_1 \ot P_{\mathcal{W}_2}\mathcal{W}'$ is a unitary isomorphism. Then  $\mathcal{M}$ reduces $(\rho,S).$ From Theorem \ref{K2}, we get 
	\begin{equation}\label{AAA1}
		\widetilde{\Theta}_2(E_2 \ot \mathcal{W}')\subseteq \mathcal{F}(E) \otimes \mathcal{W}',
	\end{equation}
	\begin{equation}\label{AAA12}
		\widetilde{\Theta}_2^*( P_{\mathcal{W}_2}^{\perp} \mathcal{W}'\oplus E \ot P_{\mathcal{W}_2}\mathcal{W}')\subseteq E_2\ot \mathcal{W}' \quad and \quad \widetilde{\Theta}_2^*( P_{\mathcal{W}_2} \mathcal{W}')=0.
	\end{equation}
	Therefore by Equation (\ref{I_1}), $\mathcal{M}$ reduces $(\rho,M_{{\Theta_2}}).$ 
	Since $\widetilde{M}_{{\Theta}_1}(I_{E_1}\ot \widetilde{M}_{{\Theta}_2})=\widetilde{S}$ and the range of $\widetilde{M}_{{\Theta}_2}$ equals $P_{\mathcal{W}_2}^{\perp}\mathcal{W}\oplus E\ot \mathcal{F}(E) \otimes \mathcal{W},$ we have $\widetilde{M}_{\Theta_1}(E_1\ot (P_{\mathcal{W}_2}^{\perp}\mathcal{W}'\oplus E\ot \mathcal{F}(E) \otimes \mathcal{W}'))\subseteq E\ot \mathcal{F}(E) \otimes \mathcal{W}'$ and $\widetilde{M}_{\Theta_1}^*(E\ot \mathcal{F}(E) \otimes \mathcal{W}')\subseteq E_1\ot \mathcal{F}(E) \otimes \mathcal{W}'.$ We want to prove that $\widetilde{M}_{\Theta_1}(E_1\ot P_{\mathcal{W}_2}\mathcal{W}')\subseteq \mathcal{M}.$ By the definition of $\widetilde{\Theta}_1$ we get \begin{align*}
		\widetilde{M}_{\Theta_1}(E_1\ot P_{\mathcal{W}_2}\mathcal{W}')=\widetilde{\Theta}_1(E_1\ot P_{\mathcal{W}_2}\mathcal{W}')=\wV^{(1)}(E_1\ot P_{\mathcal{W}_2}\mathcal{W}')\subseteq \mathcal{W}'.
	\end{align*} Since $\mathcal{W}'=(\mathcal{W}_{1}\cap \mathcal{W}' ) \oplus (\widetilde{V}^{(1)}(E_1 \otimes \mathcal{W}_{2}) \cap \mathcal{W}'),$ we have \begin{align*}
		\widetilde{M}_{\Theta_1}^*(\mathcal{W}')=\widetilde{M}_{\Theta_1}^*(\widetilde{V}^{(1)}(E_1 \otimes \mathcal{W}_{2}) \cap \mathcal{W}')\subseteq E_1\ot \mathcal{W}'.
	\end{align*} Then $\mathcal{M}$ reduces $(\rho, M_{\Theta_1}),$ and hence $\mathcal{M}$ reduces $(\rho, M_{\Theta_1}, M_{\Theta_2}).$
\end{proof}

\begin{remark}
	Let $(P_{\mathcal{W}_2},U,\mathcal{W})$ be the BCL-triple, and $\mathcal{M} \subseteq \mathcal{F}(E) \otimes \mathcal{W}$ be a closed subspace. Then   $(\rho, M_{\Theta_1}, M_{\Theta_2})$ is irreducible if and only if there doesn't exist a non-zero closed subspace $\mathcal{W'} \subseteq \mathcal{W}$ which is $P_{\mathcal{W}_2}$-invariant such that $U|_{E_2\ot \mathcal{W}'}:{E_2\ot \mathcal{W}'}\to P_{\mathcal{W}_2}^{\perp} \mathcal{W}'\oplus E_2 \ot E_1 \ot P_{\mathcal{W}_2}\mathcal{W}'$ is a unitary isomorphism and  $\mathcal{M} = \mathcal{F}(E) \otimes \mathcal{W}'.$
\end{remark}

\begin{definition}
	Let $(\sigma , V^{(1)},V^{(2)})$ and  $(\sigma' , V^{(1)'},V^{(2)'})$ be c.b.c. representations of $\mathbb{E}$ on $\mathcal{H}$ and $\mathcal{H}',$ respectively.  A bounded operator $B: \mathcal{H} \rightarrow \mathcal{H}'$ is said to be {\rm multi-analytic} for  the covariant representations $(\sigma , V^{(1)},V^{(2)})$ and  $(\sigma' , V^{(1)'},V^{(2)'})$  if $B$ intertwine  these two representations, that is, $$B\sigma(a)=\sigma'(a)B \quad \mbox{ and}  \quad BV^{(i)}(\xi_i)=V^{(i)'}(\xi_i)B, \quad a \in \mathcal{A}, \xi_i \in E_i , 1 \leq i \leq 2.$$  
\end{definition}

Suppose that $(\sigma , V^{(1)},V^{(2)})$ and  $(\sigma' , V^{(1)'},V^{(2)'})$ are the  isometric covariant representations of $\mathbb{E}$ on $\mathcal{H}$ and $\mathcal{H}',$ respectively, such that for some $1 \leq i \leq 2,$ $(\sigma, V^{(i)})$ and $(\sigma', V^{(i)'})$ are pure. Then by Theorem \ref{L4},

$$\mathcal{H}=\bigoplus_{{n} \in \mathbb{N}_0}\wt{V}_n^{(i)}(E_i^{\ot n} \otimes \mathcal{W}_i ) \:\: \mbox{and} \:\: \mathcal{H}'=\bigoplus_{{n} \in \mathbb{N}_0}\wt{V}_n^{(i)'}(E_i^{\ot n} \otimes \mathcal{W}_i'),$$ where $\mathcal{W}_i$ and $\mathcal{W}'_i$ are the generating wandering subspaces for $(\sigma , V^{(i)})$ and  $(\sigma' , V^{(i)'}),$ respectively. Let  $B: \mathcal{H} \to \mathcal{H}'$ be a multi-analytic operator, then $B$ is uniquely determined by  the operator $\Psi: \mathcal{W}_i \to \mathcal{H}'$ satisfying $\Psi \sigma(a)h=\sigma'(a)\Psi(h), h \in \mathcal{W}_i$ where $ \Psi= B|_{\mathcal{W}_i}.$ Indeed,  for $h \in \mathcal{W}_i, \eta_{{n}} \in E_i^{\ot n}$ we have $BV^{(i)}_{{n}}(\eta_{{n}})h=V_{n}^{(i)'}(\eta_{{n}})\Psi h$ and $\mathcal{H}=\bigoplus_{{n} \in \mathbb{N}_0}\wt{V}^{(i)}_{n}(E_i^{\ot n} \otimes \mathcal{W}_i ).$  Suppose that $\Psi: \mathcal{W}_i \to \mathcal{H}'\left( = \bigoplus_{{n} \in \mathbb{N}_0}\wt{V}_n^{(i)'}(E_i^{\ot n} \otimes \mathcal{W}_i') \right)$ is an operator which satisfies $\sigma'(a)\Psi h=\Psi \sigma(a)h, h\in \mathcal{W}_i.$  Define an operator $M_{\Psi}: \mathcal{H} \to \mathcal{H}'$ by  $$M_{\Psi}V_n^{(i)}(\eta_{{n}})h=V_{{n}}'(\eta_{{n}})\Psi h=V_n^{(i)'}(\eta_{{n}})M_{\Psi} h \:\:\: \eta_{{n}} \in E_i^{\ot n}, h \in \mathcal{W}_i,$$ 
where $\mathcal{H}=\bigoplus_{{n} \in \mathbb{N}_0}\wt{V}^{(i)}_{n}(E_i^{\ot n} \otimes \mathcal{W}_i ).$ Then $M_{\Psi}$ is  multi-analytic   for $(\sigma, V^{(i)})$ and  $(\sigma', V^{(i)'}),$ but it is not multi-analytic for  $(\sigma, V^{(j)})$ and  $(\sigma', V^{(j)'})$ for $1 \leq j \leq 2$ with $i \neq j.$ It happens if $$\mathcal{H}=\bigoplus_{\mathbf{n} \in \mathbb{N}^2_0}\wt{V}_{\mathbf{n}}(\mathbb{E}(\mathbf{n}) \otimes \mathcal{W}) \:\: \mbox{and} \:\: \mathcal{H}'=\bigoplus_{\mathbf{n} \in \mathbb{N}^2_0}\wt{V}'_{\mathbf{n}}(\mathbb{E}(\mathbf{n}) \otimes \mathcal{W}'),$$   $(\sigma , V^{(1)},V^{(2)})$ and  $(\sigma' , V^{(1)'},V^{(2)'})$ are doubly commuting isometries such that  $(\sigma , V^{(i)})$ and  $(\sigma' , V^{(i)'})$ are pure, for all $1 \leq i \leq 2$  (see \cite[Section 4]{TV21}). This shows that $M_{\Psi}$ is multi-analytic for $(\sigma , V^{(1)},V^{(2)})$ and  $(\sigma' , V^{(1)'},V^{(2)'}).$ 

An operator $\Psi: \mathcal{W}_i \to \mathcal{H}'$ such that $\sigma'(a)\Psi h=\Psi \sigma(a)h, h\in \mathcal{W}_i,$ 
is {\it inner} if $M_{\Psi}$ is an isometry. Note that $\Psi$ is inner if and only if $\Psi$ is an isometry and $\Psi(\mathcal{W}_i)$ is a wandering subspace for $(\sigma', V^{(i)'}).$

The following theorem is a characterization of the invariant subspaces for pure isometric covariant representation of a product system over $\mathbb{N}_0^{2}$
which is an analogue of \cite[Theorem 3.2]{MMSS19} and \cite[Theorem 4.4]{TV21}.
\begin{theorem}\label{DDDD1}
	Let $(\sigma , V^{(1)},V^{(2)})$ be an isometric covariant representation of $\mathbb{E}$ on $\mathcal{H}$ such that  $(\sigma,V^{(i)})$ is pure for some $1\le i \le 2$. Let $\mathcal{M}\subseteq\mathcal{F}({E}_i)\ot \mathcal{W}_i$ be a closed subspace. Then $\mathcal{M}$ is invariant for $(\rho_i,S^{\mathcal{W}_i}, {M}_{\Theta_{{V}^{(j)}}})$ if and only if there exist a Hilbert space $\mathcal{K}$, an isometric covariant represenation $(\varrho_i , V^{(i)'},V^{(j)'})$, $1\le j \le 2$ with $i\neq j$, of $\mathbb{E}$ on $\mathcal{K}$ and an inner operator $\Psi:\mathcal{W}'_i\to \mathcal{W}_i$  for $(\varrho_i , V^{(i)'})$ and  $(\rho_i,S^{\mathcal{W}_i})$ such that $(\varrho_i , V^{(i)'})$ is pure, $M_{\Psi}\widetilde{V}^{(j)'}= \widetilde{M}_{\Theta_{{V}^{(j)}}}(I_{E_j} \ot M_{\Psi})$ and  $$\mathcal{M}=M_{\Psi}\mathcal{K},$$ where $\mathcal{W}_i$ and $\mathcal{W}'_i$  are the generating wandering subspaces for $(\sigma, V^{(i)})$ and $(\rho_i,S^{\mathcal{W}_i})|_{\mathcal{M}},$ respectively. In particular, $M_{\Psi}$ is multi-analytic for  $(\varrho_i , V^{(i)'},V^{(j)'})$ and  $(\rho_i,S^{\mathcal{W}_i}, {M}_{\Theta_{{V}^{(j)}}}).$
\end{theorem}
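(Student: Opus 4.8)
The plan is to prove both implications by transporting the Wold--von Neumann decomposition of Theorem \ref{L4} through the multi-analytic operator $M_\Psi$, and to isolate the single extra hypothesis $M_\Psi \widetilde{V}^{(j)'}=\widetilde{M}_{\Theta_{V^{(j)}}}(I_{E_j}\ot M_\Psi)$ as exactly the data that upgrades invariance under the $E_i$-shift to invariance under the full product-system representation.

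For the forward direction I would first observe that, since $(\rho_i,S^{\mathcal{W}_i})$ is an induced (hence pure) representation of $E_i$ and $\mathcal{M}$ is $S^{\mathcal{W}_i}$-invariant, the restriction $(\rho_i,S^{\mathcal{W}_i})|_{\mathcal{M}}$ is again pure: the orthogonal projection onto $\widetilde{S}^{\mathcal{W}_i}_n(E_i^{\ot n}\ot \mathcal{M})$ is dominated by the projection onto $\widetilde{S}^{\mathcal{W}_i}_n(E_i^{\ot n}\ot(\mathcal{F}(E_i)\ot \mathcal{W}_i))$, and the latter tends to $0$ in the strong operator topology. Theorem \ref{L4} then supplies the generating wandering subspace $\mathcal{W}'_i=\mathcal{M}\ominus \widetilde{S}^{\mathcal{W}_i}(E_i\ot \mathcal{M})$ together with $\mathcal{M}=\bigoplus_{n\ge 0}\widetilde{S}^{\mathcal{W}_i}_n(E_i^{\ot n}\ot \mathcal{W}'_i)$. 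I would then take $\mathcal{K}=\mathcal{M}$ with $\varrho_i=\rho_i|_{\mathcal{M}}$, $V^{(i)'}=S^{\mathcal{W}_i}|_{\mathcal{M}}$ and $V^{(j)'}=M_{\Theta_{V^{(j)}}}|_{\mathcal{M}}$; because $\mathcal{M}$ is invariant for both shifts, these restrictions are isometric and inherit the commutation relation \eqref{rep}, so $(\varrho_i,V^{(i)'},V^{(j)'})$ is an isometric covariant representation of $\mathbb{E}$ with $(\varrho_i,V^{(i)'})$ pure. Finally I would let $\Psi\colon\mathcal{W}'_i\to \mathcal{F}(E_i)\ot \mathcal{W}_i$ be the inclusion; the associated $M_\Psi$ is then the inclusion of $\mathcal{M}$, which is an isometry, so $\Psi$ is inner, $\mathcal{M}=M_\Psi\mathcal{K}$, and the identity $M_\Psi \widetilde{V}^{(j)'}=\widetilde{M}_{\Theta_{V^{(j)}}}(I_{E_j}\ot M_\Psi)$ holds at once since $V^{(j)'}$ is the restriction of $M_{\Theta_{V^{(j)}}}$.

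For the converse I would run these steps in reverse. Innerness of $\Psi$ gives that $M_\Psi$ is an isometry intertwining the two induced $E_i$-representations, that is $M_\Psi\varrho_i(a)=\rho_i(a)M_\Psi$ and $M_\Psi \widetilde{V}^{(i)'}=\widetilde{S}^{\mathcal{W}_i}(I_{E_i}\ot M_\Psi)$. For $k\in\mathcal{K}$, $a\in\mathcal{A}$, $\xi_i\in E_i$, $\xi_j\in E_j$ one then reads off $\rho_i(a)M_\Psi k\in M_\Psi\mathcal{K}$, $\widetilde{S}^{\mathcal{W}_i}(\xi_i\ot M_\Psi k)=M_\Psi\widetilde{V}^{(i)'}(\xi_i\ot k)\in M_\Psi\mathcal{K}$, and, using the hypothesis, $\widetilde{M}_{\Theta_{V^{(j)}}}(\xi_j\ot M_\Psi k)=M_\Psi\widetilde{V}^{(j)'}(\xi_j\ot k)\in M_\Psi\mathcal{K}$. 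Hence $\mathcal{M}=M_\Psi\mathcal{K}$, which is closed because $M_\Psi$ is an isometry, is simultaneously invariant for $\rho_i$, $S^{\mathcal{W}_i}$ and $M_{\Theta_{V^{(j)}}}$. Reading these same three relations as intertwining identities rather than as inclusions yields the concluding ``in particular'' assertion that $M_\Psi$ is multi-analytic for $(\varrho_i,V^{(i)'},V^{(j)'})$ and $(\rho_i,S^{\mathcal{W}_i},M_{\Theta_{V^{(j)}}})$.

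I expect the genuinely load-bearing point --- and the only place the two-variable structure really enters --- to be the role of the condition $M_\Psi \widetilde{V}^{(j)'}=\widetilde{M}_{\Theta_{V^{(j)}}}(I_{E_j}\ot M_\Psi)$. As the discussion preceding the theorem stresses, a multi-analytic operator for the $E_i$-part need not respect the $E_j$-part, so this identity is precisely the extra hypothesis that promotes $M_\Psi$ from an intertwiner of the $E_i$-shifts to an intertwiner of the whole product-system representation. The routine but slightly delicate bookkeeping will be verifying that the restricted triple $(\varrho_i,V^{(i)'},V^{(j)'})$ satisfies \eqref{rep} and that $M_\Psi$ is compatible with the interior tensor products (the $t_{i,j}$-twists) when the intertwining relations are written at level $n$ rather than at level one.
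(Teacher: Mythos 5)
Your proof is correct, but it takes a genuinely more economical route than the paper's, and the difference is worth recording. The paper does not take $\mathcal{K}=\mathcal{M}$: it forms the restricted triple $(\pi_i,T^{(i)},T^{(j)})=(\rho_i,S^{\mathcal{W}_i},M_{\Theta_{V^{(j)}}})|_{\mathcal{M}}$, notes (as you do) that it is pure, and then applies the Wold--von Neumann unitary $\Pi_{T^{(i)}}:\mathcal{M}\to\mathcal{F}(E_i)\otimes\mathcal{W}'_i$ of Theorem \ref{L4} together with Theorem \ref{L6} to replace $T^{(j)}$ by a multiplication operator $M_{\Theta_{T^{(j)}}}$ with explicit isometric symbol $\Theta_{T^{(j)}}$; the space in the statement is then the Fock module $\mathcal{K}=\mathcal{F}(E_i)\otimes\mathcal{W}'_i$ carrying $(\rho'_i,S^{\mathcal{W}'_i},M_{\Theta_{T^{(j)}}})$, and the inner operator is extracted by showing that the isometry $\Pi_{\mathcal{M}}=i_{\mathcal{M}}\Pi_{T^{(i)}}^{*}$ is multi-analytic and hence of the form $M_{\Psi}$. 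Your choice $\mathcal{K}=\mathcal{M}$, $V^{(i)'}=S^{\mathcal{W}_i}|_{\mathcal{M}}$, $V^{(j)'}=M_{\Theta_{V^{(j)}}}|_{\mathcal{M}}$, with $\Psi$ the inclusion of $\mathcal{W}'_i$ into $\mathcal{F}(E_i)\otimes\mathcal{W}_i$ (so $M_{\Psi}=i_{\mathcal{M}}$), makes the forward implication nearly tautological and dispenses with Theorem \ref{L6} and the Wold unitary altogether; since the statement only asserts the existence of \emph{some} Hilbert space and \emph{some} isometric covariant representation on it, this is a legitimate proof. What it gives up is exactly what the paper's detour buys: a concrete Fock-module model for $\mathcal{M}$ in which the $E_j$-part of the restricted representation is realized as an analytic multiplication operator with computable symbol --- the Beurling--Lax--Halmos-type content that motivates the comparison with \cite[Theorem 3.2]{MMSS19} and \cite[Theorem 4.4]{TV21}. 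Since the two constructions agree up to the unitary $\Pi_{T^{(i)}}$, the difference is one of packaging rather than substance. Two points in your favor: you actually prove purity of the restriction (by domination of the range projections $\widetilde{T}^{(i)}_n\widetilde{T}^{(i)*}_n$ by $\widetilde{S}^{\mathcal{W}_i}_n\widetilde{S}^{\mathcal{W}_i*}_n$), which the paper only asserts as clear, and you spell out the converse and the final multi-analyticity claim, which the paper dismisses as obvious; both of your arguments are sound, including the check that the restricted triple inherits the commutation relation (\ref{rep}) from invariance of $\mathcal{M}$.
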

\begin{proof}
	Fix $1\le i \le 2$ and let $1\le j \le 2$ with $i\neq j$. Let $\mathcal{M}\subseteq\mathcal{F}({E}_i)\ot \mathcal{W}_i$ be a closed invariant subspace for $(\rho_i,S^{\mathcal{W}_i}, {M}_{\Theta_{{V}^{(j)}}})$ of ${\mathbb{E}}$ on $\mathcal{F}({E}_i)\ot \mathcal{W}_i.$ Define an isometric covariant representation $(\pi_i, T^{(i)}, T^{(j)}) = (\rho_i,S^{\mathcal{W}_i}, {M}_{\Theta_{{V}^{(j)}}})|_{\mathcal{M}}$ of $\mathbb{E}$ on a Hilbert space $\mathcal{M}$. Clearly $(\pi_i, T^{(i)}, T^{(j)})$ is  pure, since $(\pi_i, T^{(i)})$ is pure. Let $\Pi_{T^{(i)}}:\mathcal{M}\to \mathcal{F}(E_i)\ot \mathcal{W}_i'$ be the Wold-von Neumann decomposition of $(\pi_i,T^{(i)})$ on $\mathcal{M},$ where $\mathcal{W}_i'=\mathcal{M}\ominus \wT^{(i)}(E_i\ot \mathcal{M})$ is a generating wandering
	subspace for $(\pi_i,T^{(i)})$. That is, $\Pi_{T^{(i)}}$ is unitary and satisfies 
	\begin{equation}
		\Pi_{T^{(i)}}\pi_i(a)=\rho'_i(a)\Pi_{T^{(i)}} \quad and \quad	\Pi_{T^{(i)}}\widetilde{T}^{(i)}=\widetilde{S}^{\mathcal{W}_i'}(I_{E_i }\ot \Pi_{T^{(i)}}), \quad a\in \mathcal{A},
	\end{equation} where $(\rho'_i,S^{\mathcal{W}_i'})$ is the induced representation induced by $\pi_i|_{\mathcal{W}'_i}.$ Then by Theorem \ref{L6},
	there exists an isometric bi-module map $\Theta_{T^{(j)}}:E_{j}\to B(\mathcal{W}_i',\mathcal{F}({E}_i)\ot \mathcal{W}_i')$ such that
	\begin{equation}
		\Pi_{T^{(i)}}\widetilde{T}^{(j)}=\widetilde{M}_{\Theta_{T^{(j)}}}(I_{E_j}\ot \Pi_{T^{(i)}}),
	\end{equation}
	where $\Theta_{T^{(j)}}(\xi_j)=\sum_{n \in \mathbb{N}_0}\widetilde{S}_{n}^{\mathcal{W}'_i}(I_{E_i^{\otimes n}} \otimes P_{\mathcal{W}_i'})\widetilde{T}_{n}^{(i)^*}T^{(j)}(\xi_j)|_{\mathcal{W}_i'}, ~\xi_j \in E_{j}.$ Therefore $(\rho'_i,S^{\mathcal{W}_i'},{M}_{\Theta_{T^{(j)}}})$ is an isometric covariant representation of $\mathbb{E}$ on $\mathcal{F}({E}_i)\ot \mathcal{W}_i'.$
	
	Let $i_{\mathcal{M}}$ denotes the inclusion map from ${\mathcal{M}}$ to $\mathcal{F}({E}_i)\ot \mathcal{W}_i.$ Define an isometry $\Pi_{\mathcal{M}}: \mathcal{F}(E_i)\ot \mathcal{W}_i'\to \mathcal{F}({E}_i)\ot \mathcal{W}_i$ by $\Pi_{\mathcal{M}}=i_{\mathcal{M}}\Pi_{T^{(i)}}^*,$ then $\Pi_{\mathcal{M}}\Pi_{\mathcal{M}}^*=i_{\mathcal{M}}i_{\mathcal{M}}^*$ and hence the range of $\Pi_{\mathcal{M}}$ equals ${\mathcal{M}}.$ Note that $i_{\mathcal{M}}\pi_i(a)=\rho_i(a)i_{\mathcal{M}},$ $i_{\mathcal{M}}\wT^{(i)}=\widetilde{S}^{\mathcal{W}_i}(I_{E_i}\ot i_{\mathcal{M}})$ and $i_{\mathcal{M}}\wT^{(j)}=\widetilde{M}_{\Theta_{{V}^{(j)}}}(I_{E_{j}}\ot i_{\mathcal{M}}),$ we have $$\Pi_{\mathcal{M}}\rho'_i(a)=\rho_i(a)\Pi_{\mathcal{M}},\quad
	\Pi_{\mathcal{M}}\widetilde{S}^{\mathcal{W}_i'}=i_{\mathcal{M}}\wT^{(i)}(I_{E_i}\ot \Pi_{T^{(i)}}^*)=\widetilde{S}^{\mathcal{W}_i}(I_{E_i}\ot \Pi_{\mathcal{M}})$$ and \begin{equation*}
		\Pi_{\mathcal{M}}\widetilde{M}_{\Theta_{T^{(j)}}}=i_{\mathcal{M}}\wT^{(j)}(I_{E_{j}}\ot \Pi_{T^{(i)}}^*)=\widetilde{M}_{\Theta_{{V}^{(j)}}}(I_{E_{j}}\ot \Pi_{\mathcal{M}}).
	\end{equation*} Then $\Pi_{\mathcal{M}}$ is a multi-analytic operator from $(\rho'_i,S^{\mathcal{W}_i'}, {M}_{\Theta_{{T}^{(j)}}})$ to $(\rho_i,S^{\mathcal{W}_i}, {M}_{\Theta_{{V}^{(j)}}}),$ and hence there exists an inner operator $\Psi:\mathcal{W}_i'\to \mathcal{W}_i$ such that $\Pi_{\mathcal{M}}=M_{\Psi}$. Therefore $$\mathcal{M}=\Pi_{\mathcal{M}}(\mathcal{F}(E_i)\ot \mathcal{W}_i')=M_{\Psi}(\mathcal{F}(E_i)\ot \mathcal{W}_i').$$
	The converse part is obvious.
\end{proof}

\section{Connection between the Defect Operator and Fringe Operators for an isometric covariant representation of a product system over $\mathbb{N}_0^{2}$}
In this section, we will introduce the notion of \textit{Fringe operators} and \textit{joint defect operator}, and discuss the relationship between them. Define the \textit{joint defect operator} $C(\sigma,V^{(1)}, V^{(2)})$ (or, simply $C$) of the isometric covariant representation $(\sigma , V^{(1)},V^{(2)})$ of $\mathbb{E}$ on $\mathcal{H}$ by
$$C(\sigma,V^{(1)}, V^{(2)})=I-\wV^{(1)}\wV^{(1)^*}-\wV^{(2)}\wV^{(2)^*}+\wV\wV^*,$$ where $\wV=\widetilde{V}^{(1)}(I_{E_1} \otimes \widetilde{V}^{(2)} ).$ From Equation $(\ref{WWW}),$ we get \begin{align}\label{L2}
	P_{\mathcal{W}} = P_{\mathcal{W}_1} \oplus P_{\wV^{(1)} (E_1 \ot \mathcal{W}_2)} = P_{\wV^{(2)} (E_2 \ot \mathcal{W}_1)} \oplus
	P_{\mathcal{W}_2}.
\end{align}
This implies that \begin{align}\label{L3}
	C&=P_{\mathcal{W}_1} + P_{\mathcal{W}_2} - P_{\mathcal{W}}=P_{\mathcal{W}_1} - P_{\wV^{(2)} (E_2 \ot \mathcal{W}_1)} = P_{\mathcal{W}_2}-P_{\wV^{(1)} (E_1 \ot \mathcal{W}_2)}.
\end{align}

\begin{lemma}\label{L5}
	Let $(\sigma , V^{(1)},V^{(2)})$ be  an isometric covariant representation of $\mathbb{E}$ on $\mathcal{H}$.  If $\mathcal{H}=\mathcal{H}_s(V) \oplus \mathcal{H}_u(V)$ is the Wold-decomposition of $(\sigma, V),$ then the subspaces $\mathcal{H}_s(V)$ and $\mathcal{H}_u(V)$ are $(\sigma , V^{(i)})$-reducing, $$\mathcal{H}_s(V^{(i)}) \subseteq \mathcal{H}_s(V) \quad and \quad \mathcal{H}_u(V) \subseteq \mathcal{H}_u(V^{(i)}), \quad  1 \leq i \leq 2.$$ 
\end{lemma}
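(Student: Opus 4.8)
The plan is to derive all four assertions from two ingredients: a range inclusion coming from the factorisation of $\wV$, and the two-sided description of the wandering subspace in (\ref{WWW}). Throughout I write $\mathcal{H}_s(V^{(i)})=\mathcal{H}\ominus\mathcal{H}_u(V^{(i)})$ and recall from Theorem \ref{L4}, applied to $(\sigma,V)$ and to each $(\sigma,V^{(i)})$, that
$\mathcal{H}_u(V)=\bigcap_{n\ge1}\wV_n(E^{\ot n}\ot\mathcal{H})$ and $\mathcal{H}_u(V^{(i)})=\bigcap_{n\ge1}\wV^{(i)}_n(E_i^{\ot n}\ot\mathcal{H})$, all of these subspaces being $\sigma$-reducing. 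The $\sigma$-invariance demanded by the definition of a reducing subspace is thus automatic, and it remains to handle the inclusions and the invariance under the maps $V^{(i)}(\xi_i)$.

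First I would record the factorisation $\wV_n=\wV^{(1)}_n(I_{E_1^{\ot n}}\ot\wV^{(2)}_n)$, valid up to the canonical product-system identification $E^{\ot n}\cong E_1^{\ot n}\ot E_2^{\ot n}$, obtained by iterating the commutation relation (\ref{rep}); the symmetric factorisation through $\wV^{(2)}_n$ follows from $\wV^{(2)}(I_{E_2}\ot\wV^{(1)})=\wV(t_{2,1}\ot I_{\mathcal H})$, again from (\ref{rep}). Consequently $\wV_n(E^{\ot n}\ot\mathcal{H})\subseteq\wV^{(i)}_n(E_i^{\ot n}\ot\mathcal{H})$ for every $n$ and each $i$, whence $\mathcal{H}_u(V)\subseteq\mathcal{H}_u(V^{(i)})$; taking orthogonal complements yields at once $\mathcal{H}_s(V^{(i)})\subseteq\mathcal{H}_s(V)$. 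This disposes of the two inclusions.

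Next I would show that both $\mathcal{H}_u(V)$ and $\mathcal{H}_s(V)$ are invariant under every $V^{(i)}(\xi_i)$; since they are orthogonal complements of one another, this immediately gives that each of them is $(\sigma,V^{(i)})$-reducing. For $\mathcal{H}_u(V)$ I use the $\mathbb{N}_0^3$ relation $\wV^{(i)}(I_{E_i}\ot\wV)=\wV(I_E\ot\wV^{(i)})(t_{i,\mathbf 1}\ot I_{\mathcal H})$ established just before Theorem \ref{L6}, which iterates to $\wV^{(i)}(I_{E_i}\ot\wV_n)=\wV_n(I_{E^{\ot n}}\ot\wV^{(i)})(t_{i,\mathbf n}\ot I_{\mathcal H})$ for an isomorphism $t_{i,\mathbf n}\colon E_i\ot E^{\ot n}\to E^{\ot n}\ot E_i$; this shows $V^{(i)}(\xi_i)$ carries $\wV_n(E^{\ot n}\ot\mathcal H)$ into itself for each $n$, hence preserves the intersection $\mathcal{H}_u(V)$. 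For $\mathcal{H}_s(V)=\bigoplus_{n\ge0}\wV_n(E^{\ot n}\ot\mathcal{W})$, the same iterated relation expresses $V^{(i)}(\xi_i)\wV_n(\eta\ot w)$ as a finite sum of terms $\wV_n(\eta'\ot V^{(i)}(\xi')w)$ with $w\in\mathcal W$; since $\mathcal{H}_s(V)$ is $(\sigma,V)$-reducing, $\wV_n(E^{\ot n}\ot\mathcal{H}_s(V))\subseteq\mathcal{H}_s(V)$, so it suffices to verify the single inclusion $\wV^{(i)}(E_i\ot\mathcal{W})\subseteq\mathcal{H}_s(V)$.

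This last inclusion is the crux, and it is where (\ref{WWW}) is indispensable. For $i=1$ I split $\mathcal W=\wV^{(2)}(E_2\ot\mathcal W_1)\oplus\mathcal W_2$: the first summand contributes $\wV^{(1)}(I_{E_1}\ot\wV^{(2)})(E_1\ot E_2\ot\mathcal W_1)=\wV(E\ot\mathcal W_1)\subseteq\wV(E\ot\mathcal W)$, the $n=1$ layer of $\mathcal{H}_s(V)$, while the second gives $\wV^{(1)}(E_1\ot\mathcal W_2)\subseteq\mathcal W$ directly by (\ref{WWW}). For $i=2$ I instead split $\mathcal W=\mathcal W_1\oplus\wV^{(1)}(E_1\ot\mathcal W_2)$ and rewrite the awkward summand via $\wV^{(2)}(I_{E_2}\ot\wV^{(1)})=\wV(t_{2,1}\ot I_{\mathcal H})$, landing it in $\wV(E\ot\mathcal W_2)\subseteq\wV(E\ot\mathcal W)$, the remaining summand $\wV^{(2)}(E_2\ot\mathcal W_1)$ lying in $\mathcal W$ again by (\ref{WWW}) (here one uses $\mathcal W_1,\mathcal W_2\subseteq\mathcal W$, both read off from (\ref{WWW})). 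In either case $\wV^{(i)}(E_i\ot\mathcal{W})\subseteq\mathcal{H}_s(V)$, which completes the $V^{(i)}$-invariance of $\mathcal{H}_s(V)$ and hence both reducing claims. I expect this wandering-subspace bookkeeping, rather than the inclusions of the unitary parts, to be the main obstacle, since it is the only place where the precise two-sided description of $\mathcal W$ in (\ref{WWW}) genuinely has to be used.
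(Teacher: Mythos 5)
Your proposal is correct, and its core coincides with the paper's: the inclusions $\mathcal{H}_u(V)\subseteq\mathcal{H}_u(V^{(i)})$, $\mathcal{H}_s(V^{(i)})\subseteq\mathcal{H}_s(V)$ are dispatched exactly as the paper does (the paper simply says ``observe''), and your ``crux'' inclusion $\wV^{(i)}(E_i\ot\mathcal{W})\subseteq\mathcal{W}+\wV(E\ot\mathcal{W})$, obtained by splitting $\mathcal{W}$ via the two-sided decomposition (\ref{WWW}) and rewriting the awkward summand with the commutation relation, is literally the paper's displayed inclusion (\ref{WW}); your induction pushing this through the layers $\wV_n(E^{\ot n}\ot\mathcal{W})$ matches the paper's ``continuing in this way'' step. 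Where you genuinely diverge is in how the reducing property is completed. The paper, after establishing invariance of $\mathcal{H}_s(V)$, proves co-invariance: it computes $\wV^{(i)^*}\mathcal{W}=E_i\ot\mathcal{W}_j$ and then shows $\wV^{(i)^*}\wV_n(E^{\ot n}\ot\mathcal{W})\subseteq E_i\ot(\wV_{n-1}(E^{\ot n-1}\ot\mathcal{W})\oplus\wV_n(E^{\ot n}\ot\mathcal{W}))\subseteq E_i\ot\mathcal{H}_s(V)$, so that $\mathcal{H}_s(V)$ reduces and $\mathcal{H}_u(V)$ inherits the property as its orthocomplement. You instead never touch adjoints: you show $\mathcal{H}_u(V)=\bigcap_{n\ge1}\wV_n(E^{\ot n}\ot\mathcal{H})$ is itself $V^{(i)}(\xi_i)$-invariant, because the iterated $\mathbb{N}_0^3$ relation $\wV^{(i)}(I_{E_i}\ot\wV_n)=\wV_n(I_{E^{\ot n}}\ot\wV^{(i)})(t_{i,\mathbf{n}}\ot I_{\mathcal H})$ carries each range $\wV_n(E^{\ot n}\ot\mathcal{H})$ into itself, and then you invoke the fact that two mutually orthocomplementary invariant subspaces are each reducing. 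Your variant is slightly more economical and avoids the adjoint bookkeeping (and the implicit use of $\sigma$-invariance needed to pass from $\wV^{(i)^*}\mathcal{H}_s\subseteq E_i\ot\mathcal{H}_s$ to invariance of $\mathcal{H}_u$); the paper's computation, on the other hand, records the finer information of exactly where $\wV^{(i)^*}$ sends each layer of $\mathcal{H}_s(V)$, which is in the spirit of the explicit formulas used later in the paper. Both arguments stand on the same foundation, namely (\ref{WWW}) and the $\mathbb{N}_0^3$ commutation relation established before Theorem \ref{L6}.
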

\begin{proof}
	Observe that  $\mathcal{H}_u(V) \subseteq \mathcal{H}_u(V^{(i)})$ and thus $\mathcal{H}_s(V^{(i)}) \subseteq \mathcal{H}_s(V).$  Therefore, it is enough to show that $\mathcal{H}_s(V)$  reduces $(\sigma , V^{(i)}), 1\leq i \leq 2.$  From Equation $(\ref{WWW})$ and for $1\leq j \leq2 $ with $i \neq j,$ we get
	\begin{align}\label{WW} 
		\wV^{(i)}(E_i \ot \mathcal{W}) \subseteq  \wV^{(i)}(E_i \ot \mathcal{W}_j)  \oplus \wV({E} \ot \mathcal{W}_i) \subseteq   \mathcal{W} \oplus \wV({E} \ot \mathcal{W}).
	\end{align}
	For each $\xi_i \in E_i$ and $\eta\in E\ot \mathcal{W},$ we have
	\begin{align*}
		V^{(i)}(\xi_i) \wV(\eta)&= \wV^{(i)}(I_{E_i}\ot \wV)(\xi_i\ot \eta )=\wV(I_{E} \ot \wV^{(i)})(t_{i,{\bf{1}}}\ot I_{\mathcal{W}})(\xi_i\ot \eta )\\& \in \wV({E} \ot \wV^{(i)}(E_i \ot \mathcal{W})) \subseteq \wV(E \ot \mathcal{W}) \oplus \wV_2 (E^{\ot 2} \ot \mathcal{W})\subseteq \mathcal{H}_s(V).
	\end{align*}
	Continuing in this way, we get 
	$V^{(i)}(\xi_i) \wV_n(E^{\ot n}\ot \mathcal{W}) \subseteq \wV_n(E^{\ot n}\ot \mathcal{W})  \oplus \wV_{n+1}(E^{\ot n+1}\ot \mathcal{W}) \subseteq  \mathcal{H}_s(V),$ for $n\ge 0.$  Since  $\mathcal{H}_s(V)$ is $\sigma$-invariant,  the subspace  $\mathcal{H}_s(V)$ is $(\sigma,V^{(i)})$-invariant.  Observe  that  $\wV^{(i)^*}\mathcal{W}=E_i \ot \mathcal{W}_j \subseteq E_i \ot \mathcal{W}$ and  by using Equation (\ref{WW}),  we obtain
	\begin{align*}
		\wV^{(i)^*}\wV(E\ot \mathcal{W})&=(I_{E_i}\ot \wV^{(j)})(t_{j,i}\ot I_{\mathcal{W}})(E\ot \mathcal{W})\subseteq E_i \ot \wV^{(j)}(E_j \ot \mathcal{W})\\& \subseteq E_i \ot (\mathcal{W} \oplus \wV(E \ot \mathcal{W})) \subseteq E_i \ot \mathcal{H}_s(V).
	\end{align*} Similarly,  we can easily prove that
	$\wV^{(i)^*}\wV_n(E^{\ot n}\ot \mathcal{W})\subseteq E_i \ot (\wV_{n-1}(E^{\ot n-1}\ot \mathcal{W}) \oplus \wV_n(E^{\ot n}\ot \mathcal{W})) \subseteq E_i \ot \mathcal{H}_s(V)$ for each $n\geq 1.$  This shows that  $\mathcal{H}_s(V)$  reduces $(\sigma , V^{(i)})$ and hence $\mathcal{H}_u(V)$ is also  $(\sigma , V^{(i)})$-reduces   for  $1 \leq i \leq 2.$ 
\end{proof}

\begin{definition}  
	A c.b.c. representation  $(\sigma, V^{(1)}, V^{(2)})$  of  $\mathbb{E}$ on $\mathcal H$  is said to be {\rm doubly
		commuting} (cf. \cite{S08}) if 
	\begin{equation}
		\wV^{(1)^*} \wV^{(2)} =
		(I_{E_1} \ot \wV^{(2)})  (t_{2,1} \ot I_{\mathcal H})  (I_{E_2} \ot \wV^{(1)^*}).
	\end{equation}
\end{definition}
Suppose that $(\sigma, V^{(1)}, V^{(2)})$ is an isometric covariant representation of  $\mathbb{E}$  on $\mathcal H.$  Then by  Equation (\ref{L3}),  $\mathcal{W}_1$ is an invariant subspace for $(\sigma,V^{(2)})$ if and only if $\mathcal{W}_2$ is an invariant subspace for $(\sigma,V^{(1)}).$
The following lemma shows that the relation between doubly commuting isometric covariant representation and the wandering subspace $\mathcal{W}_i$ for $(\sigma, V^{(i)}).$  We denote by $R(\wV)$ and $N(\wV)$, the range of $\wV$ and the kernel of $\wV,$ respectively. 
\begin{lemma}\label{T1}
	Suppose that $(\sigma , V^{(1)},V^{(2)})$ is an isometric covariant representation of $\mathbb{E}$ on $\mathcal{H}$. The following conditions are equivalent:
	\begin{enumerate}
		\item $(\sigma , V^{(1)},V^{(2)})$ is doubly commuting;
		\item   for $i \neq j, \mathcal{W}_i$ is an invariant subspace for $(\sigma,V^{(j)}).$
	\end{enumerate} 
\end{lemma}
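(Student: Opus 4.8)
The plan is to first identify the wandering subspace $\mathcal{W}_i = \mathcal{H} \ominus \wV^{(i)}(E_i \ot \mathcal{H})$ with the kernel $N(\wV^{(i)^*})$; this is legitimate because $(\sigma,V^{(i)})$ is isometric, so by Lemma \ref{MSC} the operator $\wV^{(i)}$ is an isometry and hence has closed range. Each $\mathcal{W}_i$ is automatically $\sigma$-invariant, since $R(\wV^{(i)})$ is $\sigma$-reducing by the intertwining relation $\sigma(a)\wV^{(i)} = \wV^{(i)}(\phi(a)\ot I_{\mathcal{H}})$ of Lemma \ref{MSC}; thus the only substantive content of condition $(2)$ is that each $V^{(j)}(\xi_j)$ carries $\mathcal{W}_i$ into $\mathcal{W}_i$. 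Moreover, by the remark following Equation $(\ref{L3})$, invariance of $\mathcal{W}_1$ under $(\sigma,V^{(2)})$ is equivalent to invariance of $\mathcal{W}_2$ under $(\sigma,V^{(1)})$, so it suffices to work with a single pair, say $i=1,\ j=2$.

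For the implication $(1)\Rightarrow(2)$, assume $(\sigma,V^{(1)},V^{(2)})$ is doubly commuting and fix $w\in\mathcal{W}_1=N(\wV^{(1)^*})$ and $\xi_2\in E_2$. Applying the doubly commuting identity to $\xi_2\ot w$ gives
\[
\wV^{(1)^*}\wV^{(2)}(\xi_2\ot w)=(I_{E_1}\ot \wV^{(2)})(t_{2,1}\ot I_{\mathcal{H}})(\xi_2\ot \wV^{(1)^*}w)=0,
\]
because $\wV^{(1)^*}w=0$. Since $\wV^{(2)}(\xi_2\ot w)=V^{(2)}(\xi_2)w$, this says $V^{(2)}(\xi_2)w\in N(\wV^{(1)^*})=\mathcal{W}_1$, so $\mathcal{W}_1$ is $(\sigma,V^{(2)})$-invariant; the companion statement for $\mathcal{W}_2$ then follows from the remark on $(\ref{L3})$.

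For $(2)\Rightarrow(1)$, which is the heart of the argument, I would verify the operator identity $\wV^{(1)^*}\wV^{(2)}=(I_{E_1}\ot \wV^{(2)})(t_{2,1}\ot I_{\mathcal{H}})(I_{E_2}\ot \wV^{(1)^*})$ on all of $E_2\ot\mathcal{H}$, not merely on $E_2\ot\mathcal{W}_1$ where the hypothesis applies directly. The device is the orthogonal decomposition $\mathcal{H}=\mathcal{W}_1\oplus \wV^{(1)}(E_1\ot\mathcal{H})$: any $h\in\mathcal{H}$ writes as $h=w+\wV^{(1)}\zeta$ with $w\in\mathcal{W}_1$ and $\zeta=\wV^{(1)^*}h\in E_1\ot\mathcal{H}$. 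For $\xi_2\in E_2$ I would expand $\wV^{(1)^*}\wV^{(2)}(\xi_2\ot h)$ along this splitting. The contribution of $w$ vanishes, since $V^{(2)}(\xi_2)w\in\mathcal{W}_1=N(\wV^{(1)^*})$ by hypothesis $(2)$. For the contribution of $\wV^{(1)}\zeta$, I rewrite $\wV^{(2)}(I_{E_2}\ot\wV^{(1)})$ via the commutation relation $(\ref{rep})$ with $i=2,\ j=1$ as $\wV^{(1)}(I_{E_1}\ot\wV^{(2)})(t_{2,1}\ot I_{\mathcal{H}})$, then cancel $\wV^{(1)^*}\wV^{(1)}=I$ by the isometry property. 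Substituting back $\zeta=\wV^{(1)^*}h$ produces exactly $(I_{E_1}\ot\wV^{(2)})(t_{2,1}\ot I_{\mathcal{H}})(I_{E_2}\ot\wV^{(1)^*})(\xi_2\ot h)$, which is the doubly commuting relation.

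The only delicate point is this last direction: the hypothesis controls behaviour solely on the wandering summand $\mathcal{W}_1$, and the real work lies in showing that the commutation relation $(\ref{rep})$ together with the isometry identity $\wV^{(1)^*}\wV^{(1)}=I$ propagates this information to the complementary summand $\wV^{(1)}(E_1\ot\mathcal{H})$, thereby yielding the full operator equality rather than just its restriction.
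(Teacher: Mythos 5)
Your proof is correct, and the key direction $(2)\Rightarrow(1)$ follows a genuinely different and more economical route than the paper's. The paper first invokes the Wold decomposition of the product representation $(\sigma,V)$ (Theorem \ref{L4}) together with Lemma \ref{L5} to split $\mathcal{H}=\mathcal{H}_s(V)\oplus\mathcal{H}_u(V)$ into reducing pieces, then verifies the doubly commuting identity separately on each: on the fully co-isometric part it cancels $\wV^{(1)*}\wV^{(1)}$ and uses surjectivity of $\wV^{(1)}$, while on the shift part it runs a graded argument showing the defect $[\wV^{(i)*},\wV^{(j)}]$ annihilates each summand $E_j\ot\wV_n(E^{\ot n}\ot\mathcal{W})$, $n\ge 1$, via the commutation relation, and annihilates $E_j\ot\mathcal{W}$ via hypothesis $(2)$ and Equation (\ref{WWW}). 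Your single decomposition $\mathcal{H}=\mathcal{W}_1\oplus\wV^{(1)}(E_1\ot\mathcal{H})$ collapses all of this into one step: the wandering summand is killed by hypothesis $(2)$, and the range summand is handled by exactly the cancellation trick (relation (\ref{rep}) plus $\wV^{(1)*}\wV^{(1)}=I$) that the paper deploys piecewise; since $\wV^{(1)*}h$ recovers the range component, both sides of the identity agree on every elementary tensor $\xi_2\ot h$. What you gain is independence from Theorem \ref{L4} and Lemma \ref{L5} entirely, and a proof that treats $\mathcal{H}$ uniformly rather than through the unitary/shift dichotomy; what the paper's organization buys is mainly that Lemma \ref{L5} and the restriction arguments are reused elsewhere in that section, so the authors get the lemma as a byproduct of structure theory they develop anyway. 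Your handling of the remaining points — identifying $\mathcal{W}_i$ with $N(\wV^{(i)*})$ via closedness of the range of an isometry, noting automatic $\sigma$-invariance, and reducing to one pair $(i,j)$ via the symmetry remark following Equation (\ref{L3}) — is also sound and matches what the paper leaves implicit.
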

\begin{proof}
	Fix $1 \leq i,j \leq 2$ with $i \neq j.$ 
	
	$(1) \Rightarrow (2)$ Let $\eta_j \in E_j \ot \mathcal{W}_i$ and by doubly commutivity of $(\sigma , V^{(1)},V^{(2)})$ we have 
	\begin{align*}
		\wV^{(i)^*} \wV^{(j)}(\eta_j)&=(I_{E_i} \ot \wV^{(j)})  (t_{j,i} \ot I_{\mathcal H})  (I_{E_j} \ot \wV^{(i)^*})( {\eta_j})=0.
	\end{align*} Therefore ${\wV^{(j)} (E_j \ot \mathcal{W}_i)} \subseteq \mathcal{W}_i \:(= N(\wV^{(i)^*})),$ that is,  $ \mathcal{W}_i$ is an invariant subspace for $(\sigma,V^{(j)}).$ 
	
	$(2) \Rightarrow (1)$ From Theorem \ref{L4}, let $\mathcal{H}=\mathcal{H}_s(V) \oplus \mathcal{H}_u(V)$ be the Wold-decomposition of $(\sigma, V),$ then by Lemma \ref{L5} the subspaces $\mathcal{H}_s(V)$ and $\mathcal{H}_u(V)$ are $(\sigma , V^{(1)},V^{(2)})$-reducing and $\mathcal{H}_u(V) \subseteq \mathcal{H}_u(V^{(i)}), 1 \leq i \leq 2.$ This shows that $(\sigma , V^{(1)},V^{(2)})|_{\mathcal{H}_u(V)}$ is an isometric as well as fully co-isometric covariant representation of $\mathbb{E}$ on $\mathcal{H}_u(V).$  Since $\wV^{(1)^*}\wV^{(1)}=I$ on ${E_1\ot \mathcal{H}_u(V)}$ and $\wV^{(2)}(E_2\ot \mathcal{H}_u(V))\subseteq \mathcal{H}_u(V),$ we get $$\wV^{(1)^*}\wV^{(1)}(I_{E_1}\ot \wV^{(2)})=(I_{E_1}\ot \wV^{(2)})\quad on \quad {E_1\ot E_2\ot \mathcal{H}_u(V)}.$$ Thus $\wV^{(1)^*}\wV^{(2)}(I_{E_2}\ot \wV^{(1)})(t_{1,2}\ot I_{\mathcal{H}_u(V)})=(I_{E_1}\ot \wV^{(2)})$ on ${E_1\ot E_2\ot \mathcal{H}_u(V)}.$ Since $(\sigma ,V^{(1)})|_{\mathcal{H}_u(V)}$ is fully co-isometric, we obtain $$\wV^{(1)^*}\wV^{(2)}=(I_{E_1}\ot \wV^{(2)})(t_{2,1}\ot I_{\mathcal{H}_u(V)})(I_{E_2}\ot \wV^{(1)^*}) \quad on \quad {E_1\ot E_2\ot \mathcal{H}_u(V)}.$$ Therefore $(\sigma , V^{(1)},V^{(2)})|_{\mathcal{H}_u(V)}$  is  doubly commuting. Now we need to  show that   $(\sigma , V^{(1)},V^{(2)})|_{\mathcal{H}_s(V)}$ is doubly commuting. 
	For $n \in \mathbb{N},$  consider the equation 
	\begin{align*}
		&(\wV^{(i)^*} \wV^{(j)}-(I_{E_i} \ot \wV^{(j)})  (t_{j,i} \ot I_{\mathcal H})  (I_{E_j} \ot \wV^{(i)^*}))(I_{E_j} \ot \wV_n)\\&= (\wV^{(i)^*} \wV^{(j)}(I_{E_j}\ot \wV)-(I_{E_i} \ot \wV^{(j)})  (t_{j,i} \ot I_{\mathcal H})  (I_{E_j} \ot \wV^{(i)^*})(I_{E_j}\ot \wV)) (I_{E_j \ot E} \ot  \wV_{n-1})\\& =((I_{E_i}\ot \wV^{(j)})(t_{j,i} \ot \wV^{(j)})-(I_{E_i} \ot \wV^{(j)})  (t_{j,i} \ot\wV^{(j)}))(I_{E_j \ot E} \ot  \wV_{n-1})=0.
	\end{align*} 
	Then from Equation $(\ref{WWW})$ and by hypothesis $(2)$,  it is easy to verify that $\wV^{(i)^*} \wV^{(j)} =
	(I_{E_i} \ot \wV^{(j)})  (t_{j,i} \ot I_{\mathcal H})  (I_{E_j} \ot \wV^{(i)^*})$ on $E_j \ot \mathcal{W}.$  
	Hence,  it follows from the  above observations and by the definition of $\mathcal{H}_s(V),$ the representation $(\sigma , V^{(1)},V^{(2)})|_{\mathcal{H}_s(V)}$  is  doubly commuting.
\end{proof}

Suppose that $(\sigma , V^{(1)},V^{(2)})$ is an isometric covariant representation of $\mathbb{E}$ on $\mathcal{H}.$ Define the \textit{Fringe operators}  $\widetilde{F}^{(1)}: E_1 \ot \mathcal{W}_2 \to \mathcal{W}_2$ and $\widetilde{F}^{(2)}: E_2 \ot \mathcal{W}_1 \to \mathcal{W}_1$ by   
\begin{align}\label{H4}
	\widetilde{F}^{(1)}=P_{\mathcal{W}_2} \wV^{(1)}|_{E_1 \ot \mathcal{W}_2}\quad and \quad \widetilde{F}^{(2)}=P_{\mathcal{W}_1} \wV^{(2)}|_{E_2 \ot \mathcal{W}_1}.
\end{align} 
Since $\mathcal{W}_i$ is $\sigma$-invariant, clearly  $(\sigma_i,F^{(i)})$ is a covariant representation of $E_i$ on $\mathcal{W}_j, $   where $\sigma_i=\sigma|_{\mathcal{W}_j},1\leq j \leq 2$ with $i \neq j.$
Then 
\begin{align*}	I_{E_i \ot \mathcal{W}_j} - \widetilde{F}^{(i)^*}\widetilde{F}^{(i)}& = I_{E_i \ot \mathcal{W}_j} - P_{E_i \ot \mathcal{W}_j} \wV^{(i)^*} P_{\mathcal{W}_j}\wV^{(i)}|_{E_i \ot \mathcal{W}_j},
	\\&= P_{E_i \ot \mathcal{W}_j} \wV^{(i)^*} P_{\wV^{(j)}({E_j \ot \mathcal{W}_i}) }\wV^{(i)}|_{E_i \ot \mathcal{W}_j},
\end{align*} 
where the last Equality follows from  Equation  (\ref{L2}).  Therefore
\begin{align}\label{T2}
	\widetilde{F}^{(i)^*}\widetilde{F}^{(i)} =I_{E_i \ot \mathcal{W}_j} \Leftrightarrow      P_{\wV^{(j)}({E_j \ot \mathcal{W}_i}) }\wV^{(i)}|_{E_i \ot \mathcal{W}_j}=0 \Leftrightarrow   {\wV^{(i)} (E_i \ot \mathcal{W}_j)} \subseteq   \mathcal{W}_j ,\end{align}for $1 \leq i,j \leq 2$ with $i \neq j.$ That is,  $(\sigma_i, F^{(i)})$ is an isometric covariant representation of $E_i$ on $\mathcal{W}_j$ if and only if $\mathcal{W}_j$ is an invariant subspace  for $(\sigma_i, V^{(i)}).$
\begin{notation}
	$[\wV^{(i)^*},\wV^{(j)}]=\wV^{(i)^*}\wV^{(j)}-(I_{E_i}\ot \wV^{(j)})(t_{j,i}\ot I_{\mathcal{H}})(I_{E_j}\ot \wV^{(i)^*})$ for all $1 \leq  i,j \leq 2.$
\end{notation}
Now we discuss the connection between the Fringe operators and $[\wV^{(i)^*},\wV^{(j)}],P_{\mathcal{W}_i},P_{\mathcal{W}_j}$ for $1 \leq  i,j \leq 2.$ Note that
\begin{align}\label{K7}
	\nonumber	\widetilde{F}^{(i)^*}\widetilde{F}^{(i)}&=(I_{E_i}\ot P_{\mathcal{W}_j})\wV^{(i)^*}P_{\mathcal{W}_j}\wV^{(i)}|_{E_i \ot \mathcal{W}_j}=(I_{E_i}\ot P_{\mathcal{W}_j})(I_{E_i\ot \mathcal{W}_j}-\wV^{(i)^*}\wV^{(j)}\wV^{(j)^*}\wV^{(i)})|_{E_i \ot \mathcal{W}_j}\\&
	\nonumber = I_{E_i \ot \mathcal{W}_j}-(I_{E_i}\ot P_{\mathcal{W}_j})\wV^{(i)^*}\wV^{(j)}([\wV^{(j)^*},\wV^{(i)}] +(I_{E_j}\ot\wV^{(i)})(t_{i,j}\ot I_{\mathcal{H}})(I_{E_i}\ot\wV^{(j)^*}))\\&
	=I_{E_i \ot \mathcal{W}_j}-(I_{E_i}\ot P_{\mathcal{W}_j})\wV^{(i)^*}\wV^{(j)}[\wV^{(j)^*},\wV^{(i)}]
	=I_{E_i \ot\mathcal{W}_j}-[\wV^{(i)^*},\wV^{(j)}][\wV^{(j)^*},\wV^{(i)}]|_{E_i \ot \mathcal{W}_j}.
\end{align}
Since $\widetilde{F}^{(i)^*}=\wV^{(i)^*}|_{\mathcal{W}_j},$ we have \begin{align}\label{KK7}
	\widetilde{F}^{(i)}\widetilde{F}^{(i)^*}&= P_{\mathcal{W}_j}\wV^{(i)}\wV^{(i)^*}|_{\mathcal{W}_j}=(\wV^{(i)}\wV^{(i)^*}-\wV^{(j)}\wV^{(j)^*}\wV^{(i)}\wV^{(i)^*})|_{\mathcal{W}_j}=I_{\mathcal{W}_j}-P_{\mathcal{W}_j}P_{\mathcal{W}_i}P_{\mathcal{W}_j}.
\end{align}

The following theorem establishes the relation between the Fringe operators and the {joint defect operator} of an isometric covariant representation, which is a generalization of \cite[Theorem 3.2]{HQY15}.
\begin{theorem}\label{H8}
	Let  $(\sigma , V^{(1)},V^{(2)})$  be  an isometric covariant representation of $\mathbb{E}$ on  $\mathcal{H}.$ Then $C(\sigma,V^{(1)}, V^{(2)})$  is compact if and only if both $I_{\mathcal{W}_j}-\widetilde{F}^{(i)}\widetilde{F}^{(i)^*}$ and $I_{E_i\ot \mathcal{W}_j}-\widetilde{F}^{(i)^*}\widetilde{F}^{(i)}$  are compact.
\end{theorem}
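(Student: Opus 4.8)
The plan is to reduce the statement to a clean fact about the difference of two orthogonal projections. Fix $1\le i,j\le 2$ with $i\neq j$ and write $P=P_{\mathcal{W}_j}$, $Q=P_{\wV^{(i)}(E_i\ot\mathcal{W}_j)}$, and $W=\wV^{(i)}|_{E_i\ot\mathcal{W}_j}$, which is an isometry onto $\wV^{(i)}(E_i\ot\mathcal{W}_j)$, so that $WW^*=Q$ and $W^*W=I_{E_i\ot\mathcal{W}_j}$. By Equation (\ref{L3}) the joint defect operator factors as $C=P-Q$. From the definition $\widetilde{F}^{(i)}=P_{\mathcal{W}_j}\wV^{(i)}|_{E_i\ot\mathcal{W}_j}=PW$ together with (\ref{WWW}) one checks directly that
\[ I_{\mathcal{W}_j}-\widetilde{F}^{(i)}\widetilde{F}^{(i)^*}=PQ^{\perp}P|_{\mathcal{W}_j}=P_{\mathcal{W}_j}P_{\mathcal{W}_i}P_{\mathcal{W}_j}, \qquad I_{E_i\ot\mathcal{W}_j}-\widetilde{F}^{(i)^*}\widetilde{F}^{(i)}=W^*P^{\perp}W, \]
the first being (\ref{KK7}). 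Since $W(W^*P^{\perp}W)W^*=QP^{\perp}Q$ and $W^*(QP^{\perp}Q)W=W^*P^{\perp}W$ (using $WW^*=Q$ and $W^*W=I$), the second defect is compact precisely when $QP^{\perp}Q$ is compact. Hence the theorem is equivalent to the assertion that $P-Q$ is compact if and only if both $PQ^{\perp}P$ and $QP^{\perp}Q$ are compact.

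For the forward direction I would simply compress $C=P-Q$: the identities $P(P-Q)P=PQ^{\perp}P$ and $Q(P-Q)Q=-QP^{\perp}Q$ show at once that compactness of $C$ forces compactness of $PQ^{\perp}P$ and of $QP^{\perp}Q$, hence of both Fringe defects.

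The converse is where the real work lies. Here I would pass to the positive operator $A=(P-Q)^2=P+Q-PQ-QP$ and compute its two diagonal compressions, obtaining $PAP=PQ^{\perp}P$ and $P^{\perp}AP^{\perp}=P^{\perp}QP^{\perp}$. The first is compact by hypothesis, while $P^{\perp}QP^{\perp}=(QP^{\perp})^*(QP^{\perp})$ and $QP^{\perp}Q=(QP^{\perp})(QP^{\perp})^*$ share the same nonzero spectrum, so $P^{\perp}QP^{\perp}$ is compact as well. Writing $B=A^{1/2}$ (self-adjoint, as $A\ge 0$), the relations $PAP=(BP)^*(BP)$ and $P^{\perp}AP^{\perp}=(BP^{\perp})^*(BP^{\perp})$ give that $BP$ and $BP^{\perp}$ are compact, whence $B=BP+BP^{\perp}$, and therefore $A=B^2$, is compact. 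Finally, since $C=P-Q$ is self-adjoint with $C^2=A$ compact, $|C|=A^{1/2}$ is compact and so is $C$. The main obstacle is precisely this converse step: one cannot recover compactness of $C$ from the two one-sided Fringe defects directly, and the device of squaring to a positive operator, controlling both diagonal corners, and using the passage from $QP^{\perp}Q$ to $P^{\perp}QP^{\perp}$ (the $T^*T$ versus $TT^*$ trick) is exactly what neutralizes the off-diagonal terms $PAP^{\perp}$.
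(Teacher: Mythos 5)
Your proof is correct, and it is organized genuinely differently from the paper's, although both rest on the same pivot: $C$ is self-adjoint, so $C$ is compact iff $C^2$ is, and the pieces of $C^2$ get matched with the two Fringe defects. The paper carries this out with the commutator operators: starting from (\ref{K8}) it shows the two terms of $C^2$ are mutually orthogonal, observes $C=0$ on $R(\wV)$, block-diagonalizes $C^2$ over $\mathcal{W}=\mathcal{W}_j\oplus\wV^{(j)}(E_j\ot\mathcal{W}_i)$ as in (\ref{W4}), conjugates the second block by the unitary $\wV^{(j)}$, and then reads off the claim from (\ref{K7}) and (\ref{KK7}). You instead strip the problem down to a clean two-projection statement --- $P-Q$ compact iff $PQ^{\perp}P$ and $QP^{\perp}Q$ are compact, with $P=P_{\mathcal{W}_j}$ and $Q=P_{\wV^{(i)}(E_i\ot\mathcal{W}_j)}$ --- the covariant-representation structure entering only through (\ref{L3}), (\ref{H4}), (\ref{KK7}) and the intertwining isometry $W=\wV^{(i)}|_{E_i\ot\mathcal{W}_j}$. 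This buys a more elementary, self-contained argument (no commutator calculus, no restriction to $\mathcal{W}$), and your forward direction via the compressions $P(P-Q)P=PQ^{\perp}P$ and $Q(P-Q)Q=-QP^{\perp}Q$ needs no squaring at all; what the paper's concrete computation buys in exchange is the explicit block form (\ref{W4}) of $C^2$, which it reuses later (in proving $\dim\mathcal{E}_{\lambda}(C)=\dim\mathcal{E}_{-\lambda}(C)$), whereas your abstract route does not produce it. One simplification you missed: the square-root device $B=A^{1/2}$ is sound but unnecessary, because $A=(P-Q)^2=P+Q-PQ-QP$ commutes with $P$ (indeed $PA=AP=P-PQP=PQ^{\perp}P$), so the off-diagonal corner $PAP^{\perp}$ vanishes identically and $A=PQ^{\perp}P+P^{\perp}QP^{\perp}$ is outright the sum of your two compact compressions.
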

\begin{proof}
	Let  $1\le i,j \le 2$ with $i\neq j.$ Since $C$ is self-adjoint,   we rewrite $C$ as
	\begin{equation}\label{K8}
		C= P_{\mathcal{W}_j}P_{\mathcal{W}_i}-\wV^{(j)}[\wV^{(j)^*},\wV^{(i)}]\wV^{(i)^*}= P_{\mathcal{W}_i}P_{\mathcal{W}_j}-\wV^{(i)}[\wV^{(i)^*},\wV^{(j)}]\wV^{(j)^*}.
	\end{equation}
	Then, it is easy to verify that the operator $P_{\mathcal{W}_j}P_{\mathcal{W}_i}P_{\mathcal{W}_j}$ is orthogonal to $\wV^{(j)}[\wV^{(j)^*},\wV^{(i)}][\wV^{(i)^*},\wV^{(j)}]\\\wV^{(j)^*}$ and
	$$C^2=P_{\mathcal{W}_j}P_{\mathcal{W}_i}P_{\mathcal{W}_j}+ \wV^{(j)}[\wV^{(j)^*},\wV^{(i)}][\wV^{(i)^*},\wV^{(j)}]\wV^{(j)^*}.$$ 
	Observe that $C=0$ on $R(\wV),$ so we shall only consider $C$ on $\mathcal{W}\: (=R(\wV)^{\perp}).$ From Equation $(\ref{WWW})$  $\mathcal{W}=\mathcal{W}_{j} \oplus \widetilde{V}^{(j)}(E_j \otimes \mathcal{W}_{i}),$   then $C^2$ has of the form \begin{equation*}
		C^2 = 
		\begin{pmatrix}
			P_{\mathcal{W}_j}P_{\mathcal{W}_i}P_{\mathcal{W}_j} & 0 \\
			0 &  \wV^{(j)}[\wV^{(j)^*},\wV^{(i)}][\wV^{(i)^*},\wV^{(j)}]\wV^{(j)^*} \\
			
		\end{pmatrix}.
	\end{equation*}
	Since $\wV^{(j)}: E_j\ot \mathcal{W}_i \to \wV^{(j)}( E_j\ot \mathcal{W}_i)$ is a unitary,   $C^2$ is isomorphic to 
	\begin{equation}\label{W4}
		\begin{pmatrix}
			P_{\mathcal{W}_j}P_{\mathcal{W}_i}P_{\mathcal{W}_j} & 0 \\
			0 &  [\wV^{(j)^*},\wV^{(i)}][\wV^{(i)^*},\wV^{(j)}] \\
			
		\end{pmatrix}.
	\end{equation}
	It follows from the above observation and by using  Equations (\ref{K7}) and (\ref{KK7}), that the operators $I_{\mathcal{W}_j}-\widetilde{F}^{(i)}\widetilde{F}^{(i)^*}$ and $I_{E_i\ot \mathcal{W}_j}-\widetilde{F}^{(i)^*}\widetilde{F}^{(i)}$  are compact if and only if  $C^2$ is compact. Now $C$ is compact because  $C$ is self-adjoint.
\end{proof} 

Suppose that  $(\sigma , V^{(1)},V^{(2)})$ is a pure isometric covariant representation of $\mathbb{E}$ on $\mathcal{H}$ and  $(P_{\mathcal{W}_2},U,\mathcal{W})$ is the BCL-triple for $(\sigma , V^{(1)},V^{(2)}).$ We can rewrite the Fringe operators  $\widetilde{F}^{(i)}, 1 \leq i \leq 2,$ as 
$$\widetilde{F}^{(2)}=P_{\mathcal{W}_1}U(I_{E_2}\ot P_{\mathcal{W}_1})\quad and \quad (I_{E_2}\ot \widetilde{F}^{(1)})=( I_{E_2}\ot P_{\mathcal{W}_2})U^*(I_{E_2\ot E_1}\ot P_{\mathcal{W}_2}).$$
Note that  $\widetilde{F}^{(1)}$ and $\widetilde{F}^{(2)}$ are the compression of $U$and $U^*,$ respectively, so we will focus on $\widetilde{F}^{(2)}$ in the remaining part of this section.

\begin{theorem}\label{H6}
	Suppose that $(\sigma , V^{(1)},V^{(2)})$ is a pure isometric covariant representation of $\mathbb{E}$ on $\mathcal{H}$ and  $(P_{\mathcal{W}_2},U,\mathcal{W})$ is the BCL-triple  for $(\sigma , V^{(1)},V^{(2)}).$ The following conditions are equivalent:
	\begin{enumerate}
		\item  $C(\sigma,V^{(1)}, V^{(2)})\ge 0;$
		\item  $\mathcal{W}_1$ is an invariant subspace for $(\sigma,V^{(2)});$
		\item  $(\sigma , V^{(1)},V^{(2)})$ is doubly commuting;
		\item  $C(\sigma,V^{(1)}, V^{(2)})$ is a projection;
		\item   $(\sigma_2 , F^{(2)})$ is an isometric representation;
		\item   $U(E_2\ot \mathcal{W}_1)\subseteq \mathcal{W}_1.$
	\end{enumerate} 
\end{theorem}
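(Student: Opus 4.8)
The plan is to show that all six statements collapse onto the single geometric condition in (2), namely $\wV^{(2)}(E_2\ot\mathcal{W}_1)\subseteq\mathcal{W}_1$, using the identity $C=P_{\mathcal{W}_1}-P_{\wV^{(2)}(E_2\ot\mathcal{W}_1)}$ from Equation (\ref{L3}) as the central tool. Since $(\sigma,V^{(2)})$ is isometric, $\wV^{(2)}$ is an isometry by Lemma \ref{MSC}, so $\wV^{(2)}(E_2\ot\mathcal{W}_1)$ is a closed subspace and the projection $P_{\wV^{(2)}(E_2\ot\mathcal{W}_1)}$ is genuinely an orthogonal projection; I would record this observation first, as it underlies the positivity arguments.

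First I would establish the equivalence of (1), (2) and (4) via the elementary fact that for two orthogonal projections $P,Q$ one has $P-Q\ge 0$ if and only if $\operatorname{ran}Q\subseteq\operatorname{ran}P$, in which case $P-Q$ is itself the orthogonal projection onto $\operatorname{ran}P\ominus\operatorname{ran}Q$. Applied to $C=P_{\mathcal{W}_1}-P_{\wV^{(2)}(E_2\ot\mathcal{W}_1)}$ this gives $C\ge 0\Leftrightarrow\wV^{(2)}(E_2\ot\mathcal{W}_1)\subseteq\mathcal{W}_1$, which is exactly (1)$\Leftrightarrow$(2); the same fact shows that under (2) the operator $C$ is a projection, giving (2)$\Rightarrow$(4), while (4)$\Rightarrow$(1) is immediate because any projection is positive. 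Hence (1), (2), (4) are equivalent.

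For (3) I would appeal to Lemma \ref{T1}, which identifies double commutativity with the invariance of $\mathcal{W}_i$ under $(\sigma,V^{(j)})$ for $i\ne j$. Condition (2) is the instance $i=1,j=2$, and the remark preceding Lemma \ref{T1} shows that $\mathcal{W}_1$ invariant for $(\sigma,V^{(2)})$ is equivalent to $\mathcal{W}_2$ invariant for $(\sigma,V^{(1)})$; thus (2) secures both invariance conditions and therefore (3), while (3) trivially returns (2). For (5), Equation (\ref{T2}) with $i=2,j=1$ states directly that $\widetilde{F}^{(2)^*}\widetilde{F}^{(2)}=I$ (that is, $(\sigma_2,F^{(2)})$ is isometric) if and only if $\wV^{(2)}(E_2\ot\mathcal{W}_1)\subseteq\mathcal{W}_1$, giving (5)$\Leftrightarrow$(2). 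Finally, for (6) I would read off from the explicit description of $U$ in Theorem \ref{K2} that $U|_{E_2\ot\mathcal{W}_1}=\wV^{(2)}|_{E_2\ot\mathcal{W}_1}$, so that $U(E_2\ot\mathcal{W}_1)=\wV^{(2)}(E_2\ot\mathcal{W}_1)$; hence (6) reads precisely $\wV^{(2)}(E_2\ot\mathcal{W}_1)\subseteq\mathcal{W}_1$, i.e.\ (6)$\Leftrightarrow$(2).

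The only genuinely non-formal ingredient is the projection comparison lemma driving (1)$\Leftrightarrow$(2)$\Leftrightarrow$(4); the remaining equivalences are bookkeeping, each a one-line invocation of a result already in hand (Lemma \ref{T1} and the remark before it, Equation (\ref{T2}), and the matrix form of $U$ in Theorem \ref{K2}). I therefore expect no serious obstacle, the main point of care being to verify that $\wV^{(2)}(E_2\ot\mathcal{W}_1)$ is closed so that all the relevant operators are bona fide orthogonal projections, which follows from $\wV^{(2)}$ being an isometry on the closed subspace $E_2\ot\mathcal{W}_1$.
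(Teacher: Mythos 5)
Your proposal is correct, and its overall architecture coincides with the paper's: both proofs collapse (1), (2), (3), (5), (6) onto the single range-inclusion condition $\wV^{(2)}(E_2\ot \mathcal{W}_1)\subseteq \mathcal{W}_1$ by means of Equation (\ref{L3}), Equation (\ref{T2}), Lemma \ref{T1}, and the explicit matrix form of $U$ from Theorem \ref{K2}. The one point where you genuinely differ is condition (4): the paper derives it from (3), using the doubly commuting identity to compute $C=(I-\wV^{(1)}\wV^{(1)^*})(I-\wV^{(2)}\wV^{(2)^*})=P_{\mathcal{W}_1}P_{\mathcal{W}_2}$, a product of commuting projections, whereas you obtain (2)$\Rightarrow$(4) directly from the elementary fact that if $P,Q$ are orthogonal projections with $\operatorname{ran} Q\subseteq \operatorname{ran} P$, then $P-Q$ is itself the orthogonal projection onto $\operatorname{ran} P\ominus \operatorname{ran} Q$, applied to $C=P_{\mathcal{W}_1}-P_{\wV^{(2)}(E_2\ot \mathcal{W}_1)}$. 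Your route is slightly more economical: it never touches the doubly commuting relation and it identifies the range of the projection $C$ explicitly as $\mathcal{W}_1\ominus \wV^{(2)}(E_2\ot \mathcal{W}_1)$, whereas the paper's computation has the independent interest of exhibiting $C$, in the doubly commuting case, as $P_{\mathcal{W}_1}P_{\mathcal{W}_2}$, i.e.\ as the projection onto $\mathcal{W}_1\cap\mathcal{W}_2$. Your care in checking that $\wV^{(2)}(E_2\ot\mathcal{W}_1)$ is closed (so that $P_{\wV^{(2)}(E_2\ot \mathcal{W}_1)}$ is a bona fide orthogonal projection) is the right precaution, and both arguments are complete.
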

\begin{proof}
	By Equations (\ref{L3}), (\ref{T2}) and Lemma \ref{T1}  we get $(1) \Leftrightarrow (2), (2) \Leftrightarrow (3), (1) \Leftrightarrow (6)$ and $(2) \Leftrightarrow (5).$ Now to prove $(3)$ implies $(4),$ we have 
	\begin{align*}
		C&
		=I-\wV^{(i)}\wV^{(i)^*}-\wV^{(j)}\wV^{(j)^*} +  \wV^{(i)}(I_{E_i} \ot \wV^{(j)})(t_{j,i}\ot I_{\mathcal{H}})(I_{E_j} \ot \wV^{(i)^*})\wV^{(j)^*}\\&=I-\wV^{(i)}\wV^{(i)^*}-\wV^{(j)}\wV^{(j)^*}+\wV^{(i)}\wV^{(i)^*}\wV^{(j)}\wV^{(j)^*}=(I-\wV^{(i)}\wV^{(i)^*})(I-\wV^{(j)}\wV^{(j)^*})= P_{\mathcal{W}_i}P_{\mathcal{W}_j},
	\end{align*} 
	for all $1 \leq i,j \leq 2$ with $i \neq j.$ Therefore $C$ is a projection. The part of $(4)$ implies $(1)$ is trivial. 
\end{proof}

\begin{theorem}\label{TT}
	Let $(\sigma , V^{(1)},V^{(2)})$ be an isometric covariant representation of $\mathbb{E}$ on $\mathcal{H}$ such that  $(\sigma,V^{(i)})$ is pure for some $1\le i \le 2$. Then $C(\sigma,V^{(1)}, V^{(2)})=0$ if and only if $C(\sigma,V^{(1)}, V^{(2)})\le 0.$
\end{theorem}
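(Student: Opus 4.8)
The plan is to establish the only nontrivial implication, namely $C(\sigma,V^{(1)},V^{(2)})\le 0 \Rightarrow C(\sigma,V^{(1)},V^{(2)})=0$, the reverse being immediate. Since the hypothesis only requires $(\sigma,V^{(i)})$ to be pure for \emph{one} index $i$, and since the joint defect operator is symmetric in the two correspondences (this is visible from the two companion expressions for $C$ in Equation (\ref{L3})), I would simply fix that index and treat, say, the case where $(\sigma,V^{(2)})$ is pure; the case of $(\sigma,V^{(1)})$ pure is handled identically after interchanging $1$ and $2$ and using the expression $C=P_{\mathcal{W}_2}-P_{\wV^{(1)}(E_1\ot\mathcal{W}_2)}$.

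The key structural input is Equation (\ref{L3}), which presents $C$ as a difference of two orthogonal projections,
\[
C=P_{\mathcal{W}_1}-P_{\wV^{(2)}(E_2\ot\mathcal{W}_1)}.
\]
For orthogonal projections $P,Q$ one has $P-Q\le 0$ if and only if $\mathrm{Ran}(P)\subseteq\mathrm{Ran}(Q)$, so the analytic hypothesis $C\le 0$ is equivalent to the single subspace inclusion $\mathcal{W}_1\subseteq\wV^{(2)}(E_2\ot\mathcal{W}_1)$. This is the step that converts an operator inequality into something I can iterate.

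Next I would set $\mathcal{N}_k=\wV^{(2)}_k(E_2^{\ot k}\ot\mathcal{W}_1)$ with $\mathcal{N}_0=\mathcal{W}_1$, and observe that the assignment $\mathcal{S}\mapsto\wV^{(2)}(E_2\ot\mathcal{S})$ is monotone for inclusion and carries $\mathcal{N}_k$ to $\mathcal{N}_{k+1}$, because $\wV^{(2)}(I_{E_2}\ot\wV^{(2)}_k)=\wV^{(2)}_{k+1}$ and each $\mathcal{N}_k$ is $\sigma$-invariant (using $\sigma(a)\wV^{(2)}=\wV^{(2)}(\phi_2(a)\ot I)$ from Lemma \ref{MSC}, so that the ranges are genuine closed $\sigma$-invariant subspaces). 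Applying this monotone map repeatedly to the inclusion $\mathcal{N}_0\subseteq\mathcal{N}_1$ produces an ascending chain $\mathcal{W}_1=\mathcal{N}_0\subseteq\mathcal{N}_1\subseteq\mathcal{N}_2\subseteq\cdots$. Since $\mathcal{N}_k\subseteq\wV^{(2)}_k(E_2^{\ot k}\ot\mathcal{H})$ for every $k$, it follows that
\[
\mathcal{W}_1\subseteq\bigcap_{k\ge 0}\wV^{(2)}_k(E_2^{\ot k}\ot\mathcal{H}).
\]
Purity of $(\sigma,V^{(2)})$, i.e. $\mathrm{SOT}\text{-}\lim_k\wV^{(2)}_k\wV^{(2)*}_k=0$ (equivalently $\mathcal{H}_u(V^{(2)})=\{0\}$ in Theorem \ref{L4}), says exactly that this intersection is $\{0\}$. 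Hence $\mathcal{W}_1=\{0\}$, so that $P_{\mathcal{W}_1}=0$ and $\wV^{(2)}(E_2\ot\mathcal{W}_1)=\{0\}$, and therefore $C=0$.

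I expect the main obstacle to be the bookkeeping in the iteration step: verifying cleanly that the inclusion $\mathcal{W}_1\subseteq\wV^{(2)}(E_2\ot\mathcal{W}_1)$ propagates to an honest ascending chain of $\sigma$-invariant closed subspaces $\mathcal{N}_k$ each lying inside $\wV^{(2)}_k(E_2^{\ot k}\ot\mathcal{H})$, so that purity can be invoked to force $\mathcal{W}_1=\{0\}$. The translation of $C\le 0$ into a range containment and the final collapse are then routine, and because the entire argument is symmetric in the two indices, the assumption that merely one of $(\sigma,V^{(1)})$, $(\sigma,V^{(2)})$ is pure is exactly what is needed.
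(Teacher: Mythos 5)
Your proposal is correct and takes essentially the same route as the paper's own proof: both use Equation (\ref{L3}) to convert $C\le 0$ into the range inclusion $\mathcal{W}_j\subseteq\wV^{(i)}(E_i\ot\mathcal{W}_j)$ for the pure index $i$, iterate that inclusion to get $\mathcal{W}_j\subseteq\wV^{(i)}_n(E_i^{\ot n}\ot\mathcal{H})$ for all $n$, and then invoke purity of $(\sigma,V^{(i)})$ to force $\mathcal{W}_j=\{0\}$ and hence $C=0$. Your explicit chain $\mathcal{N}_k$ and the projection-order lemma simply spell out steps the paper leaves implicit.
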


\begin{proof}
	Fix $1\le i \le 2$ and let $1\le j \le 2$ with $i\neq j$. Suppose that  $C\le 0,$ then using Equation (\ref{L3})  ${\mathcal{W}_j} \subseteq {\wV^{(i)} (E_i \ot \mathcal{W}_j)}$ and hence $${\mathcal{W}_j} \subseteq {\wV^{(i)}_n (E_i^{\ot n} \ot \mathcal{W}_j)} \subseteq {\wV^{(i)}_n (E_i^{\ot n} \ot \mathcal{H})},$$ for all $n \in \mathbb{N}.$  Since $(\sigma , V^{(i)})$ is pure,  we get  $${\mathcal{W}_j} \subseteq \bigcap_{n \in \mathbb{N}} {\wV^{(i)}_n (E_i^{\ot n} \ot \mathcal{W}_j)} \subseteq \bigcap_{n\in \mathbb{N}}{\wV^{(i)}_n (E_i^{\ot n} \ot \mathcal{H})}=\{0\}.$$
	This shows that  $C=P_{\mathcal{W}_j} - P_{\wV^{(i)} (E_i \ot \mathcal{W}_j)} = 0.$\end{proof} 

The following two propositions, prove a characterization of the joint defect operator of $(\sigma , V^{(1)},V^{(2)}),$ which are generalizations of \cite[Theorem 3.5]{HQY15} and \cite[Lemma 3.3 and  Lemma 4.6]{BRK22}. 

\begin{proposition}\label{U1}
	Suppose that $(\sigma , V^{(1)},V^{(2)})$ is an isometric covariant representation of $\mathbb{E}$ on $\mathcal{H}.$ Then the following conditions are equivalent:
	\begin{enumerate}
		\item $C(\sigma,V^{(1)}, V^{(2)})=0;$
		\item For $i \neq j,$ $\mathcal{W}_i$ is  $(\sigma , V^{(j)})$-reducing  and $(\sigma , V^{(j)})|_{\mathcal{W}_i}$ is co-isometric;
		\item  The covariant representations $(\sigma_1,F^{(1)})$ and $(\sigma_2,F^{(2)})$ are isometric and fully co-isometric;
		\item $\mathcal{W}=\mathcal{W}_1 \oplus \mathcal{W}_2;$
		\item $(R(\wV^{(1)}) \ominus R(\wV)) \oplus (R(\wV^{(2)}) \ominus R(\wV)) \oplus R(\wV)=\mathcal{H};$
		\item  If $(P_{\mathcal{W}_2},U,\mathcal{W})$ is the BCL-triple  for   $(\sigma , V^{(1)},V^{(2)}),$  then $U(E_2\ot \mathcal{W}_1) = \mathcal{W}_1.$
	\end{enumerate}
\end{proposition}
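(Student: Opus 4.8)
The plan is to show that each of the six conditions is equivalent to the single geometric identity $\wV^{(2)}(E_2\ot\mathcal{W}_1)=\mathcal{W}_1$, which by the rewriting of the defect operator in Equation~(\ref{L3}) is exactly condition~(1): since $C=P_{\mathcal{W}_1}-P_{\wV^{(2)}(E_2\ot\mathcal{W}_1)}$ and equality of two orthogonal projections forces equality of their ranges, $C=0$ holds if and only if $\mathcal{W}_1=\wV^{(2)}(E_2\ot\mathcal{W}_1)$. Several equivalences are then immediate. Using the second decomposition in Equation~(\ref{WWW}), namely $\mathcal{W}=\wV^{(2)}(E_2\ot\mathcal{W}_1)\oplus\mathcal{W}_2$, the identity $\mathcal{W}_1=\wV^{(2)}(E_2\ot\mathcal{W}_1)$ is equivalent to $\mathcal{W}=\mathcal{W}_1\oplus\mathcal{W}_2$, giving $(1)\Leftrightarrow(4)$. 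For $(1)\Leftrightarrow(6)$ I would read off from the block form of $U$ in Theorem~\ref{K2} that $U(E_2\ot\mathcal{W}_1)=\wV^{(2)}(E_2\ot\mathcal{W}_1)$, so that $(6)$ asserts precisely $\wV^{(2)}(E_2\ot\mathcal{W}_1)=\mathcal{W}_1$.

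For $(1)\Leftrightarrow(5)$ I would first identify the three summands. Since $\wV^{(i)}$ and $\wV$ are isometries with closed ranges $R(\wV^{(i)})=\mathcal{W}_i^{\perp}$ and $R(\wV)=\mathcal{W}^{\perp}$, and $R(\wV)\subseteq R(\wV^{(i)})$, Equation~(\ref{WWW}) gives $R(\wV^{(1)})\ominus R(\wV)=\mathcal{W}\ominus\mathcal{W}_1=\wV^{(1)}(E_1\ot\mathcal{W}_2)$ and $R(\wV^{(2)})\ominus R(\wV)=\mathcal{W}\ominus\mathcal{W}_2=\wV^{(2)}(E_2\ot\mathcal{W}_1)$, while $R(\wV)=\mathcal{W}^{\perp}$. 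As $\mathcal{H}=\mathcal{W}\oplus\mathcal{W}^{\perp}$, condition~(5) is equivalent to $(\mathcal{W}\ominus\mathcal{W}_1)\oplus(\mathcal{W}\ominus\mathcal{W}_2)=\mathcal{W}$ as an orthogonal direct sum; comparing projections, this reads $(P_{\mathcal{W}}-P_{\mathcal{W}_1})+(P_{\mathcal{W}}-P_{\mathcal{W}_2})=P_{\mathcal{W}}$, i.e.\ $P_{\mathcal{W}_1}+P_{\mathcal{W}_2}=P_{\mathcal{W}}$, which is $C=0$. For $(1)\Leftrightarrow(3)$ I would use the Fringe identities already established: by Equation~(\ref{T2}) the representation $(\sigma_i,F^{(i)})$ is isometric iff $\wV^{(i)}(E_i\ot\mathcal{W}_j)\subseteq\mathcal{W}_j$, and by Equation~(\ref{KK7}) it is fully co-isometric iff $P_{\mathcal{W}_j}P_{\mathcal{W}_i}P_{\mathcal{W}_j}=0$, i.e.\ $\mathcal{W}_1\perp\mathcal{W}_2$. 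Thus (3) amounts to $\wV^{(1)}(E_1\ot\mathcal{W}_2)\subseteq\mathcal{W}_2$, $\wV^{(2)}(E_2\ot\mathcal{W}_1)\subseteq\mathcal{W}_1$ and $\mathcal{W}_1\perp\mathcal{W}_2$; combined with Equation~(\ref{WWW}) these force $\wV^{(1)}(E_1\ot\mathcal{W}_2)=\mathcal{W}_2$ and hence $\mathcal{W}=\mathcal{W}_1\oplus\mathcal{W}_2$, so $(3)\Leftrightarrow(4)$.

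The one implication requiring genuine work is $(1)\Rightarrow(2)$, and this is where I expect the main obstacle. Assuming $C=0$ I already have, via $(1)\Leftrightarrow(4)$, that $\wV^{(2)}(E_2\ot\mathcal{W}_1)=\mathcal{W}_1$ and $\mathcal{W}_1\perp\mathcal{W}_2$, so the Fringe map $\widetilde{F}^{(2)}=\wV^{(2)}|_{E_2\ot\mathcal{W}_1}$ is an isometry \emph{onto} $\mathcal{W}_1$, hence unitary; this already yields that $\mathcal{W}_1$ is $(\sigma,V^{(2)})$-invariant and that $(\sigma,V^{(2)})|_{\mathcal{W}_1}$ is co-isometric. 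The delicate point is \emph{reducing}: I must show $\mathcal{W}_1^{\perp}$ is $V^{(2)}$-invariant, equivalently $\wV^{(2)*}\mathcal{W}_1\subseteq E_2\ot\mathcal{W}_1$. The plan is a Pythagorean norm argument: for $w\in\mathcal{W}_1$ one has $\|\wV^{(2)*}w\|^2=\langle(I-P_{\mathcal{W}_2})w,w\rangle=\|w\|^2$ because $w\perp\mathcal{W}_2$, while unitarity of $\widetilde{F}^{(2)}$ gives $\|P_{E_2\ot\mathcal{W}_1}\wV^{(2)*}w\|=\|\widetilde{F}^{(2)*}w\|=\|w\|$; equality of these two norms forces $\wV^{(2)*}w\in E_2\ot\mathcal{W}_1$, whence $\mathcal{W}_1$ reduces $(\sigma,V^{(2)})$. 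The reverse implication $(2)\Rightarrow(1)$ is then straightforward: reducing makes $\widetilde{F}^{(2)}=\wV^{(2)}|_{E_2\ot\mathcal{W}_1}$ an isometry and the co-isometric hypothesis makes it surjective onto $\mathcal{W}_1$, so $\wV^{(2)}(E_2\ot\mathcal{W}_1)=\mathcal{W}_1$ and $C=0$ by Equation~(\ref{L3}). Assembling $(1)\Leftrightarrow(2),(3),(4),(5),(6)$ completes the proof.
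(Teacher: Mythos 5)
Your proof is correct, and it runs on the same machinery the paper uses --- Equations (\ref{L3}), (\ref{WWW}), (\ref{T2}), (\ref{KK7}) and the block form of $U$ from Theorem \ref{K2} --- but the execution differs in two ways worth recording. The paper dismisses the equivalences of (1), (2), (3), (4), (6) in one sentence (``easily follow from Equations (\ref{WWW}), (\ref{L3}) and Theorem \ref{H6}'') and only writes out (4) $\Leftrightarrow$ (5), which it proves by element chasing ($w\in\mathcal{W}_j$ iff $w\in R(\wV^{(i)})\ominus R(\wV)$). You instead organize everything around the single identity $\wV^{(2)}(E_2\ot\mathcal{W}_1)=\mathcal{W}_1$ and prove (1) $\Leftrightarrow$ (5) directly by projection algebra: identifying $R(\wV^{(i)})\ominus R(\wV)$ with $\mathcal{W}\ominus\mathcal{W}_i$ and reading condition (5) as $P_{\mathcal{W}_1}+P_{\mathcal{W}_2}=P_{\mathcal{W}}$, which is literally $C=0$ by (\ref{L3}); this bypasses (4) and is arguably cleaner than the paper's element chase. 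More substantively, you supply an actual argument for the one implication with real content, (1) $\Rightarrow$ (2), which the paper never writes down: the Pythagorean step $\|\wV^{(2)*}w\|=\|w\|=\|(I_{E_2}\ot P_{\mathcal{W}_1})\wV^{(2)*}w\|$ forcing $\wV^{(2)*}w\in E_2\ot\mathcal{W}_1$, i.e.\ that $\mathcal{W}_1$ is \emph{reducing} (not merely invariant) for $(\sigma,V^{(2)})$. That step is correct --- $\wV^{(2)}\wV^{(2)*}=I-P_{\mathcal{W}_2}$ and $\mathcal{W}_1\perp\mathcal{W}_2$ give the first equality, unitarity of $\widetilde{F}^{(2)}$ the second, and strict norm loss under a proper projection the conclusion --- and the symmetric case $(i,j)=(2,1)$ goes through verbatim, so your writeup in fact fills a gap the paper leaves to the reader.
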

\begin{proof}
	The equivalences of $(1),(2),(3),(4)$ and $(6)$ easily follow from the  Equations $(\ref{WWW})$, (\ref{L3}) and Theorem \ref{H6}. Now to prove $(4)$ is equivalent to $(5)$, let $1\le i,j \le 2$ with $i\neq j.$
	
	$(4)\Rightarrow (5):$ Suppose  $\mathcal{W}=\mathcal{W}_1 \oplus \mathcal{W}_2,$  we  need to show that  $\mathcal{W}_j=R(\wV^{(i)}) \ominus R(\wV).$ Let $w\in \mathcal{W}_j,$ then  $w\in R(\wV^{(i)})$ and $w \in R(\wV)^{\perp},$ that is,  $w\in R(\wV^{(i)}) \ominus R(\wV).$ If $w\in R(\wV^{(i)}) \ominus R(\wV),$ then $w\in R(\wV^{(i)})$ and $w\in \mathcal{W}.$ So $w\in \mathcal{W}_j$ and hence (5) holds.  
	
	$(5)\Rightarrow (4):$ Suppose $(5)$ holds. Since $R(\wV)\subseteq R(\wV^{(j)}),$ we have $R(\wV^{(j)})=(R(\wV^{(j)}) \ominus R(\wV)) \oplus R(\wV).$ Therefore $\mathcal{W}_j=R(\wV^{(j)})^{\perp}=R(\wV^{(i)}) \ominus R(\wV)$ and hence $\mathcal{W}_1 \oplus \mathcal{W}_2=\mathcal{W}.$
\end{proof}

\begin{proposition}
	Let $(\sigma , V^{(1)},V^{(2)})$ be an isometric covariant representation of $\mathbb{E}$ on $\mathcal{H}.$  The following conditions are equivalent:
	\begin{enumerate}
		\item $C(\sigma,V^{(1)}, V^{(2)})< 0;$ 
		\item  $\mathcal{W}_i \subsetneq {\wV^{(j)} (E_j \ot \mathcal{W}_i)};$
		\item Both $\widetilde{F}^{(1)}$ and $\widetilde{F}^{(2)}$ are co-isometries but  not unitaries;
		\item $\mathcal{W}_1$ is orthogonal to $\mathcal{W}_2$ and  $\mathcal{W} \neq \mathcal{W}_1 \oplus \mathcal{W}_2;$
		\item $C(\sigma,V^{(1)}, V^{(2)})$ is the negative of a non-zero projection;
		\item   If $(P_{\mathcal{W}_2},U,\mathcal{W})$ is the BCL-triple  for   $(\sigma , V^{(1)},V^{(2)}),$  then  $\mathcal{W}_1 \subsetneq U(E_2\ot \mathcal{W}_1).$
	\end{enumerate}
\end{proposition}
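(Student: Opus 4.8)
The plan is to reduce every statement to the elementary fact that for orthogonal projections $P_A\le P_B$ holds if and only if $A\subseteq B$, together with the two presentations of the joint defect operator recorded in $(\ref{L3})$. Throughout I read $C<0$ as $C\le 0$ and $C\neq 0$, which is the only sensible reading since $C$ vanishes on $R(\wV)=\mathcal{W}^{\perp}$ (and matches the wording of $(5)$). First I would exploit $C=P_{\mathcal{W}_1}-P_{\wV^{(2)}(E_2\ot\mathcal{W}_1)}=P_{\mathcal{W}_2}-P_{\wV^{(1)}(E_1\ot\mathcal{W}_2)}$: as a difference of projections, $C\le 0$ is equivalent to $\mathcal{W}_1\subseteq\wV^{(2)}(E_2\ot\mathcal{W}_1)$, in which case $C=-P_{\wV^{(2)}(E_2\ot\mathcal{W}_1)\ominus\mathcal{W}_1}$ vanishes exactly when the inclusion is an equality. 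Hence $C<0$ holds precisely when $\mathcal{W}_1\subsetneq\wV^{(2)}(E_2\ot\mathcal{W}_1)$ (equivalently $\mathcal{W}_2\subsetneq\wV^{(1)}(E_1\ot\mathcal{W}_2)$), giving $(1)\Leftrightarrow(2)$. For $(1)\Leftrightarrow(6)$ I would invoke Theorem $\ref{K2}$, whose block form shows $U|_{E_2\ot\mathcal{W}_1}=\wV^{(2)}|_{E_2\ot\mathcal{W}_1}$, so that $U(E_2\ot\mathcal{W}_1)=\wV^{(2)}(E_2\ot\mathcal{W}_1)$ and $(6)$ is literally $(2)$ with $i=1,j=2$.

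Next I would use the symmetric presentation $C=P_{\mathcal{W}_1}+P_{\mathcal{W}_2}-P_{\mathcal{W}}$ to handle $(4)$ and $(5)$. If $C\le 0$, then for any unit vector $x\in\mathcal{W}_1$ the inequality $\langle Cx,x\rangle\le 0$ reads $1+\|P_{\mathcal{W}_2}x\|^2\le\langle P_{\mathcal{W}}x,x\rangle\le 1$, forcing $P_{\mathcal{W}_2}x=0$; thus $\mathcal{W}_1\perp\mathcal{W}_2$. Conversely $\mathcal{W}_1\perp\mathcal{W}_2$ gives $P_{\mathcal{W}_1}+P_{\mathcal{W}_2}=P_{\mathcal{W}_1\oplus\mathcal{W}_2}\le P_{\mathcal{W}}$, since $\mathcal{W}_1\oplus\mathcal{W}_2\subseteq\mathcal{W}$ by $(\ref{WWW})$, whence $C=-P_{\mathcal{W}\ominus(\mathcal{W}_1\oplus\mathcal{W}_2)}\le 0$. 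This exhibits $C$ as the negative of a projection, which is nonzero exactly when $\mathcal{W}\neq\mathcal{W}_1\oplus\mathcal{W}_2$; combining the two halves yields $(1)\Leftrightarrow(4)\Leftrightarrow(5)$ at once.

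To obtain $(1)\Leftrightarrow(3)$ I would read the Fringe operators off $(\ref{KK7})$ and $(\ref{T2})$. Equation $(\ref{KK7})$ gives $\widetilde{F}^{(i)}\widetilde{F}^{(i)^*}=I_{\mathcal{W}_j}-P_{\mathcal{W}_j}P_{\mathcal{W}_i}P_{\mathcal{W}_j}$, and since $P_{\mathcal{W}_j}P_{\mathcal{W}_i}P_{\mathcal{W}_j}=(P_{\mathcal{W}_i}P_{\mathcal{W}_j})^*(P_{\mathcal{W}_i}P_{\mathcal{W}_j})$, the operator $\widetilde{F}^{(i)}$ is a co-isometry if and only if $\mathcal{W}_1\perp\mathcal{W}_2$, a condition symmetric in $i,j$. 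By $(\ref{T2})$, $\widetilde{F}^{(i)}$ is in addition an isometry (hence unitary) if and only if $\wV^{(i)}(E_i\ot\mathcal{W}_j)\subseteq\mathcal{W}_j$. Using $\mathcal{W}=\mathcal{W}_1\oplus\wV^{(1)}(E_1\ot\mathcal{W}_2)$ from $(\ref{WWW})$, the inclusion $\wV^{(1)}(E_1\ot\mathcal{W}_2)\subseteq\mathcal{W}_2$ together with $\mathcal{W}_1\perp\mathcal{W}_2$ is equivalent to $\mathcal{W}=\mathcal{W}_1\oplus\mathcal{W}_2$. Therefore ``both Fringe operators are co-isometries but not unitaries'' is exactly ``$\mathcal{W}_1\perp\mathcal{W}_2$ and $\mathcal{W}\neq\mathcal{W}_1\oplus\mathcal{W}_2$'', which is $(4)$, closing the cycle.

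The only genuinely delicate point is the bookkeeping of strictness: each equivalence splits into a ``$\le 0$'' half (orthogonality, resp. inclusion) and a ``$\neq 0$'' half ($\mathcal{W}\neq\mathcal{W}_1\oplus\mathcal{W}_2$, resp. strict inclusion, resp. failure of unitarity), and one must check that these two halves line up consistently across the three different presentations of $C$. Once it is observed that $C\le 0$ is governed uniformly by $\mathcal{W}_1\perp\mathcal{W}_2$ and that $C=0$ is precisely the case $\mathcal{W}=\mathcal{W}_1\oplus\mathcal{W}_2$ treated in Proposition $\ref{U1}$, the strict case $C<0$ is simply the conjunction of the former with the negation of the latter, and no further computation is required.
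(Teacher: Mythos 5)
Your proposal is correct and takes essentially the same route as the paper: the paper's own proof is a one-line citation of Equation (\ref{L3}) and Proposition \ref{U1}, and your argument fills in precisely those details --- reading (\ref{L3}) as a difference of projections for $(1)\Leftrightarrow(2)\Leftrightarrow(6)$, using the symmetric presentation $C=P_{\mathcal{W}_1}+P_{\mathcal{W}_2}-P_{\mathcal{W}}$ together with (\ref{WWW}) for $(1)\Leftrightarrow(4)\Leftrightarrow(5)$, and the fringe-operator identities (\ref{T2}), (\ref{KK7}) plus Proposition \ref{U1} for $(3)$ and the strictness bookkeeping. The only value added over the paper is that you make the needed interpretation $C<0$ as ``$C\le 0$ and $C\neq 0$'' explicit and verify that the strict and non-strict halves match across the different presentations of $C$, which the paper leaves implicit.
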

\begin{proof}
	The proof follows from Equation $(\ref{L3})$ and Lemma \ref{U1}. 	
\end{proof}

Since $C(\sigma,V^{(1)}, V^{(2)})$ is self-adjoint. If $C$ is compact, its nonzero eigenvalues are in $[-1,1].$ Let $T$ be a bounded linear operator on $\mathcal{H}$ and $\mathcal{E}_{\lambda}(T)$ denotes the eigenspace corresponding to the eigenvalue $\lambda$ of $T$. 

The following theorem is a generalization of \cite[Proposition 4.1]{HQY15}:
\begin{theorem}\label{H5}
	Let $C(\sigma,V^{(1)}, V^{(2)})$ be the joint defect operator of an isometric covariant representation $(\sigma ,V^{(1)},\\V^{(2)})$ of $\mathbb{E}$ on $\mathcal{H}$. Then
	\begin{enumerate}
		\item $\mathcal{E}_1(C)=\mathcal{H} \ominus (\wV^{(1)}(E_1 \ot \mathcal{H}) + \wV^{(2)}(E_2 \ot \mathcal{H}));$  
		\item $\mathcal{E}_{-1}(C)= (\wV^{(1)}(E_1 \ot \mathcal{H}) \cap \wV^{(2)}(E_2 \ot \mathcal{H})) \ominus \wV(E\ot \mathcal{H}).$
	\end{enumerate}
\end{theorem}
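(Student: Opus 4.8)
The plan is to reduce both statements to the spectral structure of the single positive contraction $A := I - C = \wV^{(1)}\wV^{(1)*} + \wV^{(2)}\wV^{(2)*} - \wV\wV^{*}$, exploiting that $\mathcal{E}_1(C) = \ker(C - I) = \ker A$ while $\mathcal{E}_{-1}(C) = \ker(C + I) = \ker(2I - A)$. Since each $(\sigma, V^{(i)})$ is isometric, Lemma \ref{MSC} makes $\wV^{(i)}$ an isometry, so $P_i := \wV^{(i)}\wV^{(i)*}$ is the orthogonal projection onto $R(\wV^{(i)}) = \wV^{(i)}(E_i \ot \mathcal{H})$; likewise $\wV$ is isometric and $P := \wV\wV^{*}$ is the orthogonal projection onto $R(\wV) = \wV(E \ot \mathcal{H})$. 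With this notation $C = I - P_1 - P_2 + P$ and $A = P_1 + P_2 - P$.

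The one genuinely structural step is the inclusion $R(\wV) \subseteq R(\wV^{(1)}) \cap R(\wV^{(2)})$. Containment in $R(\wV^{(1)})$ is immediate from $\wV = \wV^{(1)}(I_{E_1}\ot \wV^{(2)})$; containment in $R(\wV^{(2)})$ comes from rewriting $\wV = \wV^{(2)}(I_{E_2}\ot \wV^{(1)})(t_{1,2}\ot I_{\mathcal{H}})$ by means of the commutation relation (\ref{rep}). Hence $P \le P_1$ and $P \le P_2$ as projections, which I will use in the quantitative form $\|Px\| \le \|P_1 x\|$ and $\|Px\| \le \|P_2 x\|$ for every $x \in \mathcal{H}$.

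Everything then follows from the quadratic form $\langle Ax, x\rangle = \|P_1 x\|^2 + \|P_2 x\|^2 - \|Px\|^2$. Bounding $\|Px\|^2$ by $\|P_2 x\|^2$ (resp.\ by $\|P_1 x\|^2$) gives $\langle Ax, x\rangle \ge \|P_1 x\|^2$ (resp.\ $\ge \|P_2 x\|^2$), so $\langle Ax, x\rangle \ge \max\{\|P_1 x\|^2, \|P_2 x\|^2\} \ge 0$ and $A \ge 0$; and $\langle Ax, x\rangle \le \|P_1 x\|^2 + \|P_2 x\|^2 \le 2\|x\|^2$, so $A \le 2I$. For (1), positivity gives $\ker A = \{x : \langle Ax, x\rangle = 0\}$, and the lower bound forces $P_1 x = P_2 x = 0$; thus $\mathcal{E}_1(C) = R(\wV^{(1)})^{\perp} \cap R(\wV^{(2)})^{\perp} = \mathcal{H} \ominus (\wV^{(1)}(E_1 \ot \mathcal{H}) + \wV^{(2)}(E_2 \ot \mathcal{H}))$. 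For (2), $2I - A \ge 0$ gives $\ker(2I - A) = \{x : \langle Ax, x\rangle = 2\|x\|^2\}$, and equality in $\|P_i x\| \le \|x\|$ together with $\|Px\| \ge 0$ forces $P_1 x = P_2 x = x$ and $Px = 0$; hence $\mathcal{E}_{-1}(C) = (R(\wV^{(1)}) \cap R(\wV^{(2)})) \ominus R(\wV)$, the $\ominus$ being legitimate precisely because $R(\wV) \subseteq R(\wV^{(1)}) \cap R(\wV^{(2)})$.

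I expect no serious obstacle: the heart is simply recognizing $C = I - A$ with $0 \le A \le 2I$, so that the $\pm 1$ eigenspaces of $C$ are exactly the vectors on which $\langle Ax, x\rangle$ attains its extreme values $0$ and $2\|x\|^2$. The only point needing care is the inclusion $R(\wV) \subseteq R(\wV^{(2)})$, which cannot be read off the definition of $\wV$ and must pass through the commutation relation (\ref{rep}); this is also what secures $P \le P_2$ and hence the sharp upper bound $A \le 2I$ used in part (2).
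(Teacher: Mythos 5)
Your proposal is correct, and it reorganizes the argument in a way that is genuinely different from the paper's proof of Theorem \ref{H5}, even though both ultimately rest on positivity and quadratic forms of $I\mp C$. You funnel every use of the product-system structure through one lemma: the range inclusion $R(\wV)\subseteq R(\wV^{(1)})\cap R(\wV^{(2)})$, whose nontrivial half $R(\wV)\subseteq R(\wV^{(2)})$ indeed follows from rewriting $\wV=\wV^{(1)}(I_{E_1}\ot\wV^{(2)})=\wV^{(2)}(I_{E_2}\ot\wV^{(1)})(t_{1,2}\ot I_{\mathcal H})$ via relation (\ref{rep}). After that, with $P_i=\wV^{(i)}\wV^{(i)^*}$ and $P=\wV\wV^*$, everything is Hilbert-space projection arithmetic: the two symmetric bounds $\langle Ax,x\rangle\ge\|P_1x\|^2$ and $\langle Ax,x\rangle\ge\|P_2x\|^2$ (from $P\le P_2$ and $P\le P_1$ respectively) kill $P_1x$ and $P_2x$ simultaneously in part (1). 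The paper instead proves part (1) by the operator identity $I-C=\wV^{(1)}\wV^{(1)^*}+\wV^{(2)}(I_{E_2}\ot P_{\mathcal{W}_1})\wV^{(2)^*}$, which via its Equation (\ref{CCC}) only yields $\wV^{(1)^*}h=0$ and $(I_{E_2}\ot P_{\mathcal{W}_1})\wV^{(2)^*}h=0$, and then needs a second, separate application of the commutation relation to upgrade the latter to $\wV^{(2)^*}h=0$; your symmetric treatment avoids that step. For part (2) the two arguments coincide up to notation: your equality analysis in $\langle Ax,x\rangle\le 2\|x\|^2$ is exactly the paper's decomposition $C+I=P_{\mathcal{W}_1}+P_{\mathcal{W}_2}+\wV\wV^*$ evaluated as a quadratic form. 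What your route buys is economy and symmetry (one structural lemma, then lattice arithmetic); what the paper's route buys is the explicit identity (\ref{CCC}), which it reuses later, e.g.\ in the eigenvalue-symmetry theorem following Theorem \ref{H5} to see that $\wV^{(i)^*}h\neq 0$ on $\mathcal{E}_{\lambda}(C)$.

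One point to make explicit when you write this up: the claimed equalities of sets need the reverse inclusions as well, namely that $P_1x=P_2x=0$ forces $Px=0$ and hence $Cx=x$, and that $P_1x=P_2x=x$ together with $Px=0$ forces $Cx=-x$. Both are immediate from $P\le P_i$ and the definition of $C$, but your sketch states only the forward directions.
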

\begin{proof}
	
	$(1)$ Let	 $h \in \mathcal{H} \ominus (\wV^{(1)}(E_1 \ot \mathcal{H}) + \wV^{(2)}(E_2 \ot \mathcal{H}))= N(\wV^{(1)^*}) \cap N(\wV^{(2)^*}),$ then using Equation (\ref{WWW}) we have 
	\begin{align*}
		(C-I)h= \wV\wV^*h -\wV^{(1)}\wV^{(1)^*}h-\wV^{(2)}\wV^{(2)^*}h=0.
	\end{align*} That is,   $\mathcal{H} \ominus (\wV^{(1)}(E_1 \ot \mathcal{H}) + \wV^{(2)}(E_2 \ot \mathcal{H})) \subseteq \mathcal{E}_1(C).$ Note that $	I-C= \wV^{(1)}\wV^{(1)^*}+ \wV^{(2)}(I_{E_2}\ot P_{\mathcal{W}_1})\wV^{(2)^*},$ for every $h \in \mathcal{H}$  \begin{align}\label{CCC}
		\langle(I-C)h,h\rangle=\|\wV^{(1)^*}h\|^2+\|(I_{E_2}\ot P_{\mathcal{W}_1})\wV^{(2)^*}h\|^2. \end{align}
	This shows that, for all $h \in \mathcal{E}_1(C),$ $(I-C)h=0$ if and only if   $\wV^{(1)^*}h=0$
	and $(I_{E_2}\ot P_{\mathcal{W}_1})\wV^{(2)^*}h=0.$ Also,  it gives 
	\begin{align*}
		0=(I_{E_2}\ot P_{\mathcal{W}_1})\wV^{(2)^*}h&=\wV^{(2)^*}h-(I_{E_2}\ot \wV^{(1)})(t_{1,2}\ot I_{\mathcal{H}})(I_{E_1}\ot \wV^{(2)^*})\wV^{(1)^*}h=\wV^{(2)^*}h.
	\end{align*}Therefore  $ \mathcal{E}_1(C)=N(\wV^{(1)^*}) \cap N (\wV^{(2)^*})=\mathcal{H} \ominus (\wV^{(1)}(E_1 \ot \mathcal{H}) + \wV^{(2)}(E_2 \ot \mathcal{H})).$
	
	$(2)$ Let $h \in (\wV^{(1)}(E_1 \ot \mathcal{H}) \cap \wV^{(2)}(E_2 \ot \mathcal{H})) \ominus \wV(E\ot \mathcal{H}),$ then there exist $\eta_1\in E_1\ot \mathcal{H}$ and $\eta_2\in E_2\ot \mathcal{H}$ such that $h=\wV^{(1)}(\eta_1)=\wV^{(2)}(\eta_2)$ and $h \in N(\wV^*).$ Then  
	\begin{align*}
		(C+I)h=2h-\wV^{(1)}\wV^{(1)^*}h-\wV^{(2)}\wV^{(2)^*}h=2h-\wV^{(1)}(\eta_1)-\wV^{(2)}(\eta_2)=0.
	\end{align*} That is,  $(\wV^{(1)}(E_1 \ot \mathcal{H}) \cap \wV^{(2)}(E_2 \ot \mathcal{H})) \ominus \wV(E\ot \mathcal{H}) \subseteq \mathcal{E}_{-1}(C).$  On the other hand, let $h\in \mathcal{E}_{-1}(C),$ then we have  
	\begin{align*}
		0=\langle (C+I)h,h \rangle &=\langle P_{\mathcal{W}_1}h,h \rangle +\langle P_{\mathcal{W}_2}h,h \rangle +\langle \wV\wV^*h,h \rangle =\|P_{\mathcal{W}_1}h\|^2 + \|P_{\mathcal{W}_2}h\|^2 + \|\wV^*h\|^2.
	\end{align*} Therefore $P_{\mathcal{W}_1}h=0,P_{\mathcal{W}_2}h=0$ and $h\in N (\wV^*)$ and hence 
	$h \in (\wV^{(1)}(E_1 \ot \mathcal{H}) \cap \wV^{(2)}(E_2 \ot \mathcal{H})) \ominus \wV(E\ot \mathcal{H}).\qedhere$
	
\end{proof}

\begin{theorem}
	Suppose that $(\sigma , V^{(1)},V^{(2)})$ is an isometric covariant representation of $\mathbb{E}$ on  $\mathcal{H}.$ Let $\lambda$ be a nonzero eigenvalue of $C(\sigma,V^{(1)}, V^{(2)})$ in $(-1,1).$ Then $-\lambda$ is an eigenvalue of $C(\sigma,V^{(1)}, V^{(2)})$ and dim $\mathcal{E}_{\lambda}(C)$ = dim $ \mathcal{E}_{-\lambda}(C).$
\end{theorem}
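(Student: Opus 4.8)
The plan is to exploit the formula for the joint defect operator obtained in Equation~(\ref{L3}), which exhibits $C=C(\sigma,V^{(1)},V^{(2)})$ as a difference of two orthogonal projections. Indeed, writing $P=P_{\mathcal{W}_1}$ and $Q=P_{\widetilde{V}^{(2)}(E_2\ot\mathcal{W}_1)}$, Equation~(\ref{L3}) gives $C=P-Q$; here $\widetilde{V}^{(2)}(E_2\ot\mathcal{W}_1)$ is a closed subspace of $\mathcal{H}$ because $(\sigma,V^{(2)})$ is isometric and hence $\widetilde{V}^{(2)}$ is an isometry, so $Q$ is a genuine orthogonal projection. Reducing the claim to a statement about the difference of two projections is the crux; once this is in place the eigenvalue symmetry is a formal consequence of the two projection relations.

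Next I would introduce the self-adjoint auxiliary operator $B=P+Q-I$ and record two identities obtained by the straightforward projection algebra $P^2=P,\ Q^2=Q$:
\begin{align*}
CB+BC=0 \qquad\text{and}\qquad C^2+B^2=I.
\end{align*}
The first says $C$ and $B$ anticommute (the two cross terms reduce to $PQ-QP$ and its negative), and the second follows from $C^2=P+Q-PQ-QP$ together with $B^2=PQ+QP-P-Q+I$. These are the only computations needed, and neither uses the product-system structure beyond the identification $C=P-Q$.

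Finally I would run the standard intertwining argument on the eigenspaces. Fix the nonzero $\lambda\in(-1,1)$ with $\mathcal{E}_\lambda(C)\neq\{0\}$. If $Cx=\lambda x$ then, using $CB=-BC$, we get $C(Bx)=-B(Cx)=-\lambda\,(Bx)$, so $B$ maps $\mathcal{E}_\lambda(C)$ into $\mathcal{E}_{-\lambda}(C)$. On $\mathcal{E}_\lambda(C)$ the relation $C^2+B^2=I$ reads $B^2=(1-\lambda^2)I$, and since $\lambda\in(-1,1)$ the scalar $1-\lambda^2$ is strictly positive; hence $B$ is injective on $\mathcal{E}_\lambda(C)$ and in particular $B\mathcal{E}_\lambda(C)\neq\{0\}$, which already shows that $-\lambda$ is an eigenvalue. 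The same reasoning with $\lambda$ replaced by $-\lambda$ shows that $B$ maps $\mathcal{E}_{-\lambda}(C)$ injectively into $\mathcal{E}_\lambda(C)$, and since $B^2=(1-\lambda^2)I$ is a nonzero scalar on each eigenspace, $B$ restricts to a linear bijection between $\mathcal{E}_\lambda(C)$ and $\mathcal{E}_{-\lambda}(C)$ (with inverse $(1-\lambda^2)^{-1}B$). Therefore $\dim\mathcal{E}_\lambda(C)=\dim\mathcal{E}_{-\lambda}(C)$. I expect the only genuinely delicate point to be the verification that $C$ has the displayed projection-difference form together with the closedness of $\widetilde{V}^{(2)}(E_2\ot\mathcal{W}_1)$; everything after that is the classical symmetry of the spectrum of a difference of two orthogonal projections.
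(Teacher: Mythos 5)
Your proof is correct, and it takes a genuinely different route from the paper's. Both arguments rest on Equation (\ref{L3}), which writes $C=P_{\mathcal{W}_1}-P_{\widetilde{V}^{(2)}(E_2\otimes\mathcal{W}_1)}$, but from there you invoke the classical two-projection mechanism: with $P=P_{\mathcal{W}_1}$, $Q=P_{\widetilde{V}^{(2)}(E_2\otimes\mathcal{W}_1)}$ and $B=P+Q-I$, your identities $CB+BC=0$ and $C^2+B^2=I$ follow by direct projection algebra (I checked them: $CB=PQ-QP$, $BC=QP-PQ$, $C^2=P+Q-PQ-QP$, $B^2=PQ+QP-P-Q+I$), and then $B$ restricts to an explicit bijection $\mathcal{E}_{\lambda}(C)\to\mathcal{E}_{-\lambda}(C)$ with inverse $(1-\lambda^{2})^{-1}B$, since $B^{2}=(1-\lambda^{2})I$ on these eigenspaces and $1-\lambda^{2}>0$ for $\lambda\in(-1,1)$. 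The paper instead works with the commutator operators $[\widetilde{V}^{(i)*},\widetilde{V}^{(j)}]$: for $h\in\mathcal{E}_{\lambda}(C)$ it derives $\lambda\widetilde{V}^{(1)*}h=-[\widetilde{V}^{(1)*},\widetilde{V}^{(2)}]\widetilde{V}^{(2)*}h$ and $\lambda^2 P_{\mathcal{W}_1}h=P_{\mathcal{W}_1}P_{\mathcal{W}_2}P_{\mathcal{W}_1}h$, concludes that $\widetilde{V}^{(1)*}$ and $P_{\mathcal{W}_1}$ inject $\mathcal{E}_{\lambda}(C)$ into the $\lambda^{2}$-eigenspaces of the two diagonal blocks of $C^{2}$ exhibited in Equation (\ref{W4}), so that $\dim\mathcal{E}_{\lambda^2}(C^2)\ge 2\dim\mathcal{E}_{\lambda}(C)$, and then compares with $\dim\mathcal{E}_{\lambda^2}(C^2)=\dim\mathcal{E}_{\lambda}(C)+\dim\mathcal{E}_{-\lambda}(C)$, arguing symmetrically in $\lambda$ and $-\lambda$. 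What your route buys: it is more elementary (no commutator computations, no block decomposition of $C^2$), it exhibits a concrete intertwining operator, and because it produces a linear bijection rather than an inequality of dimensions it remains valid verbatim when the eigenspaces are infinite-dimensional, a case where the paper's cardinal arithmetic (deducing $\mu\ge\kappa$ from $\kappa+\mu\ge 2\kappa$) is delicate. What the paper's route buys: its intermediate facts, the eigenspace injections and the block form of $C^2$, are exactly the ingredients already developed for Theorem \ref{H8} and reused for the Fringe-operator and Fredholm-index results, so its proof recycles machinery the paper needs anyway. Your one prerequisite, closedness of $\widetilde{V}^{(2)}(E_2\otimes\mathcal{W}_1)$, is settled as you say because $\widetilde{V}^{(2)}$ is an isometry, and the corresponding orthogonal projection is already used by the paper in Equations (\ref{L2}) and (\ref{L3}), so your crux step is fully covered by facts the paper has established.
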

\begin{proof}
	Let $\lambda$ be a nonzero eigenvalue of  $C,$ then there exists a non-zero element $h\in \mathcal{E}_{\lambda}(C)$ such that $C h= \lambda h.$ Therefore by (\ref{CCC}),  $\wV^{(i)^*}h\neq 0$ for $1\le i\le 2.$ Now we get \begin{align*} 
		\lambda\wV^{(1)^*}h&=\wV^{(1)^*}(I-\wV^{(1)}\wV^{(1)^*}-\wV^{(2)}\wV^{(2)^*}+\wV\wV^*)h\\&= (-\wV^{(1)^*}\wV^{(2)}+(I_{E_1}\ot \wV^{(2)})(t_{2,1}\ot I_{\mathcal{H}})(I_{E_1}\ot \wV^{(1)^*}))\wV^{(2)^*}h=-[\wV^{(1)^*},\wV^{(2)}]\wV^{(2)^*}h.
	\end{align*}Similarly, we obtain 
	$\lambda\wV^{(2)^*}h=-[\wV^{(2)^*},\wV^{(1)}]\wV^{(1)^*}h.$ Therefore $$\lambda^2\wV^{(1)^*}h=[\wV^{(1)^*},\wV^{(2)}][\wV^{(2)^*},\wV^{(1)}]\wV^{(1)^*}h.$$ It follows that, the  restriction map $$\wV^{(1)^*}|_{ \mathcal{E}_{\lambda}(C)}: \mathcal{E}_{\lambda}(C) \to \mathcal{E}_{\lambda^2}([\wV^{(1)^*},\wV^{(2)}][\wV^{(2)^*},\wV^{(1)}]),$$ $h\mapsto \wV^{(1)^*}h$ is one-to-one and hence
	\begin{equation}\label{W2}
		dim \mathcal{E}_{\lambda}(C) \le dim \mathcal{E}_{\lambda^2}([\wV^{(1)^*},\wV^{(2)}][\wV^{(2)^*},\wV^{(1)}]).
	\end{equation} For $1\le i,j\le 2$ with $i\neq j,$ we have  
	$\lambda P_{\mathcal{W}_j}h=P_{\mathcal{W}_j}Ch=P_{\mathcal{W}_j}P_{\mathcal{W}_i}h, \: h\in \mathcal{E}_{\lambda}(C)$ and thus  $$\lambda^2 P_{\mathcal{W}_1}h=P_{\mathcal{W}_1}P_{\mathcal{W}_2}P_{\mathcal{W}_1}h.$$ Suppose that $P_{\mathcal{W}_i}h= 0,$ for some non zero $ h \in   \mathcal{E}_{\lambda}(C)$ and  $\lambda \in (-1, 1), $ then $P_{\mathcal{W}_j}h= 0.$ Note that  $C=0 $ on $R(\wV),$ and $h \in ( R(\wV^{(1)}) \cap R(\wV^{(2)})) \ominus R(\wV)=\mathcal{E}_{-1}(C)$ (by Theorem \ref{H5}), which is a contradiction. Therefore  $P_{\mathcal{W}_i}h\neq 0,$ $1\le i \le 2,$ and hence the restriction  map $P_{\mathcal{W}_1}|_{ \mathcal{E}_{\lambda}(C) }: \mathcal{E}_{\lambda}(C) \to \mathcal{E}_{\lambda^2}(P_{\mathcal{W}_1}P_{\mathcal{W}_2}P_{\mathcal{W}_1})$ by $h\mapsto P_{\mathcal{W}_1}h$ is  one-to-one. This  gives \begin{equation}\label{W3}
		dim \mathcal{E}_{\lambda}(C) \le dim \mathcal{E}_{\lambda^2}(P_{\mathcal{W}_1}P_{\mathcal{W}_2}P_{\mathcal{W}_1}).
	\end{equation} From Equations (\ref{W2}),(\ref{W3}) and  (\ref{W4}), we get \begin{align*}
		dim \mathcal{E}_{\lambda^2}(C^2)\ge \mbox{2} dim \mathcal{E}_{\lambda}(C). 
	\end{align*} If $\lambda^2$ is an eigenvalue of $C^2,$  then either $\lambda$ or $-\lambda$ is an eigenvalue of $C$. Hence,
	$dim \mathcal{E}_{\lambda^2}(C^2)= dim \mathcal{E}_{\lambda}(C) +dim \mathcal{E}_{-\lambda}(C).$ Therefore $dim \mathcal{E}_{\lambda}(C) \le dim \mathcal{E}_{-\lambda}(C).$ Similarly, using the same  approach for $-\lambda,$ we get  $dim \mathcal{E}_{\lambda}(C) \ge dim \mathcal{E}_{-\lambda}(C).$ This completes the proof.
\end{proof}

Let $(\sigma , V^{(1)},V^{(2)})$ be an isometric covariant representation of $\mathbb{E}$ on  $\mathcal{H}.$ Suppose that the joint defect operator $C$ is compact, then ${N(C)^{\perp}}$ can be written as
\begin{equation*}
	{N(C)^{\perp}}=  \mathcal{E}_{1}(C) \oplus \big(\bigoplus_{0 < \lambda_i < 1} \mathcal{E}_{\lambda_i}(C)\big) \oplus \mathcal{E}_{-1}(C) \oplus \big(\bigoplus_{-1 < \lambda_i < 0} \mathcal{E}_{\lambda_i}(C)\big).
\end{equation*}Let $d_1=dim \mathcal{E}_{1}(C), d_2= dim \mathcal{E}_{-1}(C)$ and $B= \bigoplus_{0< \lambda_i< 1} \lambda_i P_i,$ where $P_i: \mathcal{H}\to \mathcal{E}_{\lambda_i}(C)$ be the orthogonal projection onto $\mathcal{E}_{\lambda_i}(C).$  Then we have the following theorem:

\begin{theorem}\label{A1}
	Suppose that $(\sigma , V^{(1)},V^{(2)})$ is an isometric covariant representation of $\mathbb{E}$ on  $\mathcal{H}$ such that $C(\sigma,V^{(1)},\\V^{(2)})$ is compact. Then $C(\sigma,V^{(1)},V^{(2)})|_{{N(C)^{\perp}}}$ is isomorphic to  
	\begin{equation*}
		\begin{pmatrix}
			I_{d_1} & 0 & 0 & 0 \\
			0 &  B & 0 & 0 \\
			0 & 0 & -I_{d_2} & 0 \\
			0 & 0 & 0 & -B \\
		\end{pmatrix},
	\end{equation*} where $I_{n}$ is the $n \times n$ identity matrix.
\end{theorem}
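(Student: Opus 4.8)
The plan is to invoke the spectral theorem for the compact self-adjoint operator $C(\sigma,V^{(1)},V^{(2)})$ and then regroup its eigenspaces into the four prescribed blocks, using the preceding spectral-symmetry theorem to recognize the negative part of the spectrum as a copy of the positive part.

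First I would recall that $C=C(\sigma,V^{(1)},V^{(2)})$ is self-adjoint (as observed before the statement) and compact by hypothesis. Hence, by the spectral theorem for compact self-adjoint operators, $N(C)^{\perp}$ admits an orthonormal basis of eigenvectors, every nonzero eigenvalue lies in $[-1,1]$ with finite-dimensional eigenspace, and $0$ is the only possible accumulation point of the spectrum. This is exactly what legitimizes the orthogonal decomposition
$$N(C)^{\perp}= \mathcal{E}_{1}(C) \oplus \Big(\bigoplus_{0 < \lambda_i < 1} \mathcal{E}_{\lambda_i}(C)\Big) \oplus \mathcal{E}_{-1}(C) \oplus \Big(\bigoplus_{-1 < \lambda_i < 0} \mathcal{E}_{\lambda_i}(C)\Big)$$
recorded just before the theorem, with $C$ acting on each summand as multiplication by the corresponding eigenvalue.

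Next I would read off the action of $C$ on the first three blocks. Fixing orthonormal bases identifies $\mathcal{E}_{1}(C)$ with $\mathbb{C}^{d_1}$, on which $C$ acts as $I_{d_1}$, and $\mathcal{E}_{-1}(C)$ with $\mathbb{C}^{d_2}$, on which $C$ acts as $-I_{d_2}$; on $\bigoplus_{0<\lambda_i<1}\mathcal{E}_{\lambda_i}(C)$ the operator $C$ is by definition $B=\bigoplus_{0<\lambda_i<1}\lambda_i P_i$. The only remaining block is the negative part $\bigoplus_{-1<\lambda_i<0}\mathcal{E}_{\lambda_i}(C)$, where $C$ acts as $\bigoplus \lambda_i P_i$ with each $\lambda_i\in(-1,0)$, and the task is to present this precisely as $-B$.

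This last identification is where I apply the preceding theorem: for every $\lambda_i\in(0,1)$ the value $-\lambda_i$ is again an eigenvalue and $\dim\mathcal{E}_{\lambda_i}(C)=\dim\mathcal{E}_{-\lambda_i}(C)$, so the map $\lambda_i\mapsto-\lambda_i$ is a multiplicity-preserving bijection between the eigenvalues in $(0,1)$ and those in $(-1,0)$, and for each such pair there is a unitary $U_i:\mathcal{E}_{-\lambda_i}(C)\to\mathcal{E}_{\lambda_i}(C)$. Taking $W$ to be the direct sum of the three basis-identifications above together with $\bigoplus_i U_i$ on the negative block gives a unitary $W$ from $N(C)^{\perp}$ onto $\mathbb{C}^{d_1}\oplus\big(\bigoplus_{0<\lambda_i<1}\mathcal{E}_{\lambda_i}(C)\big)\oplus\mathbb{C}^{d_2}\oplus\big(\bigoplus_{0<\lambda_i<1}\mathcal{E}_{\lambda_i}(C)\big)$; conjugating $C|_{N(C)^{\perp}}$ by $W$ carries the negative block $\bigoplus\lambda_i P_i$ (with $\lambda_i<0$) to $\bigoplus(-\lambda_i)P_i=-B$ on the copy of $\bigoplus_{0<\lambda_i<1}\mathcal{E}_{\lambda_i}(C)$, producing the asserted block-diagonal matrix. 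I expect the genuine content to lie entirely in this final step: the spectral-symmetry theorem is what forces the negative half of the spectrum to mirror the positive half on the same index set, so that the fourth block can be written as $-B$; everything else is routine spectral bookkeeping, with the compactness hypothesis supplying the discreteness and finite multiplicities needed to form the matrix.
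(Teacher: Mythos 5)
Your proof is correct and follows exactly the route the paper intends: the paper states this theorem without proof, as an immediate consequence of the spectral decomposition of $N(C)^{\perp}$ recorded just before it and the preceding spectral-symmetry theorem (that $\lambda\mapsto-\lambda$ is a multiplicity-preserving bijection on the eigenvalues in $(-1,1)$). Your write-up simply makes explicit the unitary identification of the negative-spectrum block with $-B$, which is the same bookkeeping the paper leaves to the reader.
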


\begin{corollary}
	Let $(\sigma , V^{(1)},V^{(2)})$ be an isometric covariant representation of $\mathbb{E}$ on  $\mathcal{H}.$ Suppose that $C(\sigma,V^{(1)}, V^{(2)})$ is compact and negative (or positive), then rank of $C(\sigma,V^{(1)}, V^{(2)})$ is finite.
\end{corollary}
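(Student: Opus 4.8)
The plan is to read the rank off the spectral structure supplied by Theorem \ref{A1}, using the eigenvalue-pairing theorem established immediately above it to force sign-definiteness to collapse all the intermediate blocks. Since $C := C(\sigma, V^{(1)}, V^{(2)})$ is self-adjoint and, by hypothesis, compact, its rank equals $\dim N(C)^{\perp}$; so it suffices to prove that $N(C)^{\perp}$ is finite-dimensional. I would treat the case $C \le 0$ in detail, the case $C \ge 0$ being entirely symmetric.

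First I would recall the two standard facts about a compact self-adjoint operator that are already in play here: its nonzero spectrum consists of eigenvalues of finite multiplicity accumulating only at $0$, and (as noted in the text preceding Theorem \ref{A1}) these nonzero eigenvalues lie in $[-1,1]$. Under the assumption $C \le 0$, every nonzero eigenvalue therefore lies in $[-1,0)$.

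The key step is to exclude eigenvalues in the open interval $(-1,0)$. Here I would invoke the pairing theorem proved just before Theorem \ref{A1}: for any nonzero eigenvalue $\lambda \in (-1,1)$ of $C$, the value $-\lambda$ is again an eigenvalue, with $\dim \mathcal{E}_{\lambda}(C) = \dim \mathcal{E}_{-\lambda}(C)$. If some $\lambda \in (-1,0)$ were an eigenvalue, then $-\lambda \in (0,1)$ would be a strictly positive eigenvalue of $C$, contradicting $C \le 0$. Hence $C$ has no eigenvalue in $(-1,0)$, and the only possible nonzero eigenvalue is $-1$.

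Consequently $N(C)^{\perp} = \mathcal{E}_{-1}(C)$, which is finite-dimensional precisely because $-1 \ne 0$ is an eigenvalue of the compact operator $C$. Equivalently, in the notation of Theorem \ref{A1} the hypothesis $C \le 0$ forces $d_1 = 0$ and $B = 0$, so that $C|_{N(C)^{\perp}}$ is isomorphic to $-I_{d_2}$; reading off, $\operatorname{rank} C = d_2 = \dim \mathcal{E}_{-1}(C) < \infty$. The only point I would take care to verify is that specializing Theorem \ref{A1} to the sign-definite case is legitimate, but this is immediate since definiteness merely annihilates the complementary eigenspaces. I do not anticipate a genuine obstacle here; the substance of the argument is contained entirely in the pairing theorem and the compactness of $C$, with the positive case following by interchanging the roles of $1$ and $-1$.
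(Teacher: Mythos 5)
Your proof is correct and is essentially the argument the paper intends: the corollary is stated without proof precisely because it follows from Theorem \ref{A1} together with the eigenvalue-pairing theorem, which is exactly how you argue — sign-definiteness plus the pairing of $\lambda$ with $-\lambda$ rules out all eigenvalues in $(-1,0)\cup(0,1)$, leaving only $\mathcal{E}_{-1}(C)$ (resp. $\mathcal{E}_{1}(C)$), which is finite-dimensional by compactness. You also correctly identify that the pairing theorem is indispensable here, since a compact negative operator could otherwise have infinitely many eigenvalues accumulating at $0$.
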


\section{Fredholm index for covariant representation of a product system over $\mathbb{N}_0^{2}$}

In this section, we introduce the Fredholm index of a covariant representation of the product system $\mathbb{E}$ and prove the index of the Fringe operator is the same as the Fredholm index of an isometric covariant representation of a product system over $\mathbb{N}_0^2.$ Also it's related to the eigenspaces of compact joint defect operator $C(\sigma,V^{(1)}, V^{(2)}).$ One of the main results of this section, Corollary \ref{CRR}, is a classification result for the isometric covariant representations of the product system over $\mathbb{N}_0^2$ using the congruence relation discussed in \cite{HQY15,Y05}

A covariant representation $(\sigma, V)$ of $E$ on $\mathcal{H}$ is said to be  {\it Fredholm} if $R(\wV)$ is closed, and both $N(\wV)$ and $N(\wV^*)$ have finite dimensions.  The index of $(\sigma, V)$ in this case is defined by  $$ind(\sigma,V) = dim(N(\wV)) - dim(N(\wV^*)).$$
Let  $(\sigma , V^{(1)},V^{(2)})$ be  a covariant representation of $\mathbb{E}$ on  $\mathcal{H},$ then for $E:=E_1\ot E_2,$ there is a short sequence 
\[
\begin{tikzcd}
	0 \arrow{r} &	E \ot \mathcal{H} \arrow{r}{d_1}  & (E_1 \ot \mathcal{H}) \oplus (E_2\ot \mathcal{H}) \arrow{r}{d_2} 
	& \mathcal{H} \arrow{r} &0 ,
\end{tikzcd}
\]
where $$d_1(\eta)= (-(I_{E_1} \ot \wV^{(2)})\eta,(I_{E_2} \ot \wV^{(1)})(t_{1,2}\ot I_{\mathcal{H}})\eta)\quad \: \mbox{and} \: \quad  d_2(\eta_1,\eta_2)=\wV^{(1)} \eta_1 +\wV^{(2)}\eta_2,$$ for  $\eta \in E \ot \mathcal{H}$ and $\eta_i\in E_i \ot \mathcal{H}, 1\le i\le 2.$
It is easy to see that  $d_2d_1=0.$ Indeed, let $\eta \in E \ot \mathcal{H},$ then we have 
\begin{align*}
	d_2d_1\eta&=d_2(-(I_{E_1} \ot \wV^{(2)})\eta,(I_{E_2} \ot \wV^{(1)})(t_{1,2}\ot I_{\mathcal{H}})\eta)\\&=-\wV^{(1)}(I_{E_1} \ot \wV^{(2)})\eta+ \wV^{(2)}(I_{E_2} \ot \wV^{(1)})(t_{1,2}\ot I_{\mathcal{H}})\eta=0.
\end{align*}

\begin{definition}
	A covariant representation $(\sigma , V^{(1)},V^{(2)})$  of $\mathbb{E}$ on  $\mathcal{H}$ is said to be (jointly) {\rm Fredholm} if both $d_1$ and $d_2$ have closed range  and $$dim(N(d_1))+dim[N (d_2)\ominus R(d_1)]+dim[\mathcal{H}\ominus R(d_2)] < \infty.$$ The {\rm index} of $(\sigma , V^{(1)},V^{(2)})$ is defined by $$ind(\sigma , V^{(1)},V^{(2)})=-dim(N(d_1))+dim[N(d_2)\ominus R(d_1)]- dim[\mathcal{H}\ominus R(d_2)].$$
\end{definition}

Suppose that $(\sigma , V^{(1)},V^{(2)})$ is an isometric covariant representation of $\mathbb{E}$ on  $\mathcal{H},$ then
it is easy to see that $N(d_1)=0.$ The following lemma is helpful to derive the proof of Theorem \ref{AAA}.
\begin{lemma}\label{K4}
	Let $(\sigma , V^{(1)},V^{(2)})$ be an isometric covariant representation of $\mathbb{E}$ on  $\mathcal{H}.$ Then
	\begin{equation*}\label{H3}
		N(d_2) \ominus R(d_1) = \{(\eta_1,-\wV^{(2)^*}\wV^{(1)}\eta_1) : \eta_1 \in R(I_{E_1} \ot \wV^{(2)})^{\perp} , \wV^{(1)}\eta_1 \in R(\wV^{(2)})\}.
	\end{equation*}
\end{lemma}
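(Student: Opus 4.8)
The plan is to describe $N(d_2)$ and $R(d_1)^\perp$ separately and then intersect them; the only nonroutine ingredient will be the adjoint of the commutation relation (\ref{rep}). First I would identify $N(d_2)$. Since $(\sigma,V^{(2)})$ is isometric, $\wV^{(2)^*}\wV^{(2)}=I$, so $\wV^{(2)}$ is injective with closed range $R(\wV^{(2)})$. A pair $(\eta_1,\eta_2)$ lies in $N(d_2)$ precisely when $\wV^{(2)}\eta_2=-\wV^{(1)}\eta_1$; such an $\eta_2$ exists if and only if $\wV^{(1)}\eta_1\in R(\wV^{(2)})$, and in that case applying $\wV^{(2)^*}$ gives $\eta_2=-\wV^{(2)^*}\wV^{(1)}\eta_1$. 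Hence
\[
N(d_2)=\{(\eta_1,-\wV^{(2)^*}\wV^{(1)}\eta_1)\,:\,\wV^{(1)}\eta_1\in R(\wV^{(2)})\}.
\]

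Next I would compute $R(d_1)^\perp$. A pair $(\eta_1,\eta_2)$ is orthogonal to $R(d_1)$ iff $\langle(\eta_1,\eta_2),d_1\eta\rangle=0$ for every $\eta\in E\ot\mathcal H$. Expanding the two summands of $d_1\eta$ and moving each operator onto the $\eta$-factor via its adjoint (using $(I_{E_2}\ot\wV^{(1)})(t_{1,2}\ot I_{\mathcal H}))^*=(t_{2,1}\ot I_{\mathcal H})(I_{E_2}\ot\wV^{(1)^*})$, since $t_{1,2}^*=t_{2,1}$), this orthogonality for all $\eta$ is equivalent to the single identity
\[
(I_{E_1}\ot\wV^{(2)^*})\eta_1=(t_{2,1}\ot I_{\mathcal H})(I_{E_2}\ot\wV^{(1)^*})\eta_2.
\]

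Finally I would combine the two descriptions. Restricting attention to $(\eta_1,\eta_2)\in N(d_2)$, I substitute $\eta_2=-\wV^{(2)^*}\wV^{(1)}\eta_1$ into the right-hand side above; the main step is to simplify $(t_{2,1}\ot I_{\mathcal H})(I_{E_2}\ot\wV^{(1)^*})\wV^{(2)^*}\wV^{(1)}\eta_1$. Taking adjoints in (\ref{rep}) for $i=1,\,j=2$ yields $(I_{E_1}\ot\wV^{(2)^*})\wV^{(1)^*}=(t_{2,1}\ot I_{\mathcal H})(I_{E_2}\ot\wV^{(1)^*})\wV^{(2)^*}$; applying both sides to $\wV^{(1)}\eta_1$ and using $\wV^{(1)^*}\wV^{(1)}=I$ collapses that expression to $(I_{E_1}\ot\wV^{(2)^*})\eta_1$. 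The $R(d_1)^\perp$ condition therefore reads $(I_{E_1}\ot\wV^{(2)^*})\eta_1=-(I_{E_1}\ot\wV^{(2)^*})\eta_1$, forcing $(I_{E_1}\ot\wV^{(2)^*})\eta_1=0$, i.e. $\eta_1\in N(I_{E_1}\ot\wV^{(2)^*})=R(I_{E_1}\ot\wV^{(2)})^{\perp}$, which is exactly the claimed set. I do not anticipate a genuine obstacle here; the only delicate point is the bookkeeping of the isomorphisms $t_{1,2},t_{2,1}$ and the adjoints, and the crucial observation is that the adjoint form of (\ref{rep}) is precisely what makes the $R(d_1)^\perp$ constraint degenerate into the simple kernel condition on $\eta_1$.
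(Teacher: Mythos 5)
Your proposal is correct and follows essentially the same route as the paper's proof: both characterize membership in $N(d_2)$ via the isometry $\wV^{(2)}$ (forcing $\eta_2=-\wV^{(2)^*}\wV^{(1)}\eta_1$ and $\wV^{(1)}\eta_1\in R(\wV^{(2)})$) and then reduce orthogonality to $R(d_1)$, using the commutation relation (\ref{rep}) together with $\wV^{(1)^*}\wV^{(1)}=I$, to the condition $\eta_1\in R(I_{E_1}\ot \wV^{(2)})^{\perp}$. The only difference is presentational: you phrase the $R(d_1)^{\perp}$ step as an adjoint operator identity, while the paper carries out the same cancellation inside inner products against arbitrary $\eta\in E\ot\mathcal{H}$.
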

\begin{proof}
	Let $y=(\eta_1,-\wV^{(2)^*}\wV^{(1)}\eta_1)$ in the R.H.S. of the above Equation, then we have
	\begin{align*}
		d_2(\eta_1,-\wV^{(2)^*}\wV^{(1)}\eta_1)&=\wV^{(1)}\eta_1 - \wV^{(2)}\wV^{(2)^*}\wV^{(1)}\eta_1=\wV^{(1)}\eta_1-\wV^{(1)}\eta_1=0.
	\end{align*}For each $\eta\in E \ot \mathcal{H},$ we get 
	\begin{align*}
		\langle (\eta_1,-\wV^{(2)^*}\wV^{(1)}\eta_1), d_1(\eta) \rangle &= -\langle \eta_1,(I_{E_1} \ot \wV^{(2)})\eta \rangle - \langle \wV^{(2)^*}\wV^{(1)}\eta_1, (I_{E_2} \ot \wV^{(1)})(t_{1,2}\ot I_{\mathcal{H}})\eta \rangle \\&=- \langle \eta_1,(I_{E_1} \ot \wV^{(2)})\eta \rangle - \langle \wV^{(1)}\eta_1, \wV^{(1)}(I_{E_1} \ot \wV^{(2)})\eta \rangle \\&= -2 \langle \eta_1,(I_{E_1} \ot \wV^{(2)})\eta \rangle =0.
	\end{align*} On the other hand, let $(\eta_1,\eta_2)\in  N(d_2) \ominus R(d_1),$  then 
	$\wV^{(1)}\eta_1 + \wV^{(2)}\eta_2=d_2(\eta_1,\eta_2)=0$ and thus  $\eta_2=-\wV^{(2)^*}\wV^{(1)}\eta_1$ and $\wV^{(1)}\eta_1 \in R(\wV^{(2)}).$ Since $(\eta_1,\eta_2)\in R(d_1)^{\perp},$  $\eta_1 \in R(I_{E_1} \ot \wV^{(2)})^{\perp}.$ 
\end{proof}

The following lemma establishes the range relation between the Fringe operator and the isometric covariant representation of the product system $\mathbb{E}$ over $\mathbb{N}_0^2$, which is an analogue of \cite[Lemma 1.1]{Y03}.

\begin{lemma}\label{K5}
	Let $(\sigma , V^{(1)},V^{(2)})$ be an isometric covariant representation of $\mathbb{E}$ on  $\mathcal{H}.$ Then $$R(\widetilde{F}^{(2)})=(R(\wV^{(1)})+R(\wV^{(2)})) \ominus R(\wV^{(1)}).$$ Moreover, $R(\widetilde{F}^{(2)})$ is closed if and only if $R(\wV^{(1)})+R(\wV^{(2)})$ is closed.  Also, $$N(\widetilde{F}^{(2)^*})=\mathcal{H} \ominus (R(\wV^{(1)})+R(\wV^{(2)})).$$
\end{lemma}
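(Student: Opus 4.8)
The plan is to prove the range identity first, and then obtain the closedness equivalence and the kernel description as formal consequences. Throughout I use that, since $(\sigma,V^{(1)},V^{(2)})$ is isometric, each $\wV^{(i)}$ is an isometry by Lemma \ref{MSC}; hence $R(\wV^{(i)})$ is \emph{closed}, $P_{\mathcal{W}_i}=I-\wV^{(i)}\wV^{(i)^*}$, and $\mathcal{W}_i=N(\wV^{(i)^*})$. I also use the facts established above for the Fringe operator, namely $\widetilde{F}^{(2)}=P_{\mathcal{W}_1}\wV^{(2)}|_{E_2\ot\mathcal{W}_1}$ and $\widetilde{F}^{(2)^*}=\wV^{(2)^*}|_{\mathcal{W}_1}$.

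For the identity $R(\widetilde{F}^{(2)})=(R(\wV^{(1)})+R(\wV^{(2)}))\ominus R(\wV^{(1)})$, observe first that, since $R(\wV^{(1)})$ is closed, the right-hand side equals $(R(\wV^{(1)})+R(\wV^{(2)}))\cap\mathcal{W}_1$. For the inclusion $\subseteq$, given $\eta\in E_2\ot\mathcal{W}_1$ I write $\widetilde{F}^{(2)}\eta=\wV^{(2)}\eta-\wV^{(1)}\wV^{(1)^*}\wV^{(2)}\eta$, which lies in $\mathcal{W}_1$ and manifestly in $R(\wV^{(1)})+R(\wV^{(2)})$. For $\supseteq$, take $h\in\mathcal{W}_1$ with $h=\wV^{(1)}\eta_1+\wV^{(2)}\eta_2$. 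Applying $P_{\mathcal{W}_1}$ and using $P_{\mathcal{W}_1}\wV^{(1)}=0$ gives $h=P_{\mathcal{W}_1}\wV^{(2)}\eta_2$. The crux is to replace $\eta_2$ by its component in $E_2\ot\mathcal{W}_1$: splitting $\eta_2=(I_{E_2}\ot P_{\mathcal{W}_1})\eta_2+(I_{E_2}\ot\wV^{(1)}\wV^{(1)^*})\eta_2$, the commutation relation (\ref{rep}) in the form $\wV^{(2)}(I_{E_2}\ot\wV^{(1)})=\wV^{(1)}(I_{E_1}\ot\wV^{(2)})(t_{2,1}\ot I_{\mathcal{H}})$ forces $\wV^{(2)}$ to carry the second summand into $R(\wV^{(1)})$, which $P_{\mathcal{W}_1}$ annihilates. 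Hence $h=\widetilde{F}^{(2)}\big((I_{E_2}\ot P_{\mathcal{W}_1})\eta_2\big)\in R(\widetilde{F}^{(2)})$.

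The second assertion is then formal. Setting $M=R(\wV^{(1)})$ (closed) and $S=R(\wV^{(1)})+R(\wV^{(2)})$, the first part reads $R(\widetilde{F}^{(2)})=S\cap M^{\perp}$. Since $M\subseteq S$, every $s\in S$ decomposes orthogonally as $s=P_M s+(s-P_M s)$ with $s-P_M s\in S\cap M^{\perp}$, so $S=M\oplus(S\cap M^{\perp})$. Because the orthogonal sum of two closed subspaces is closed and the intersection of two closed subspaces is closed, $S$ is closed if and only if $S\cap M^{\perp}=R(\widetilde{F}^{(2)})$ is closed. For the last assertion I use $\widetilde{F}^{(2)^*}=\wV^{(2)^*}|_{\mathcal{W}_1}$: its kernel is $\mathcal{W}_1\cap N(\wV^{(2)^*})=N(\wV^{(1)^*})\cap N(\wV^{(2)^*})=(R(\wV^{(1)})+R(\wV^{(2)}))^{\perp}$, i.e. $\mathcal{H}\ominus(R(\wV^{(1)})+R(\wV^{(2)}))$.

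The main obstacle is the reverse inclusion in the range identity, specifically controlling the part of $\eta_2$ lying in $E_2\ot R(\wV^{(1)})$. This is exactly where the commutation relation (\ref{rep}) is indispensable: without it there is no reason for $\wV^{(2)}(E_2\ot R(\wV^{(1)}))$ to sit inside $R(\wV^{(1)})$, and the clean description of $R(\widetilde{F}^{(2)})$ would break down. Everything else—closedness of the ranges $R(\wV^{(i)})$ and the projection formula $P_{\mathcal{W}_i}=I-\wV^{(i)}\wV^{(i)^*}$—is immediate from the isometry of $\wV^{(i)}$.
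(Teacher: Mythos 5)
Your proof is correct and follows essentially the same route as the paper: the inclusion $\subseteq$ is identical, and for $\supseteq$ you arrive at $h=P_{\mathcal{W}_1}\wV^{(2)}\eta_2$ (the paper gets there via an inner-product computation showing $\eta_1=-\wV^{(1)^*}\wV^{(2)}\eta_2$, you by applying $P_{\mathcal{W}_1}$ directly, which is the same content), after which the decisive step — splitting $\eta_2=(I_{E_2}\ot P_{\mathcal{W}_1})\eta_2+(I_{E_2}\ot\wV^{(1)}\wV^{(1)^*})\eta_2$ and using the commutation relation (\ref{rep}) to push the second summand into $R(\wV^{(1)})$, which $P_{\mathcal{W}_1}$ annihilates — is exactly the paper's argument. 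The paper's proof in fact ends with the range identity, leaving the closedness equivalence and the formula for $N(\widetilde{F}^{(2)^*})$ unargued; your routine verifications of these (the orthogonal decomposition $R(\wV^{(1)})+R(\wV^{(2)})=R(\wV^{(1)})\oplus R(\widetilde{F}^{(2)})$, and $\mathcal{W}_1\cap N(\wV^{(2)^*})=(R(\wV^{(1)})+R(\wV^{(2)}))^{\perp}$) are correct and complete the statement.
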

\begin{proof}
	Let $\eta_2\in E_2\ot \mathcal{W}_1,$   $\widetilde{F}^{(2)}\eta_2=P_{\mathcal{W}_1}\wV^{(2)}\eta_2=\wV^{(2)}\eta_2-\wV^{(1)}\wV^{(1)^*}\wV^{(2)}\eta_2.$ That is, $$R(\widetilde{F}^{(2)}) \subseteq (R(\wV^{(1)})+R(\wV^{(2)})) \ominus R(\wV^{(1)}).$$
	On the other hand,   let  $h \in (R(\wV^{(1)})+R(\wV^{(2)})) \ominus R(\wV^{(1)}),$ then there exist $\eta_1\in E_1\ot \mathcal{H}$ and $\eta_2\in E_2\ot \mathcal{H}$  such that $h=\wV^{(1)}\eta_1+\wV^{(2)}\eta_2.$ Since $h \in \mathcal{W}_1$ and  for each $\eta'_1 \in E_1 \ot \mathcal{H},$ we get
	\begin{align*}
		\langle \eta_1 +\wV^{(1)^*}\wV^{(2)}\eta_2,\eta'_1\rangle &= \langle \eta_1,\eta'_1\rangle + \langle \wV^{(1)^*}\wV^{(2)}\eta_2,\eta'_1\rangle =\langle \wV^{(1)}\eta_1+\wV^{(2)}\eta_2,\wV^{(1)}\eta_1'\rangle =0.
	\end{align*} It follows that $\eta_1 +\wV^{(1)^*}\wV^{(2)}\eta_2=0$ and hence
	$h=\wV^{(1)}\eta_1+\wV^{(2)}\eta_2=(I-\wV^{(1)}\wV^{(1)^*})\wV^{(2)}\eta_2.$ Since $\eta_2=(I_{E_2}\ot (I-\wV^{(1)}\wV^{(1)^*}))\eta_2+(I_{E_2}\ot \wV^{(1)}\wV^{(1)^*})\eta_2$ and using the commutant relation of $(\sigma , V^{(1)},V^{(2)}),$ we have 
	\begin{align*}
		h&=(I-\wV^{(1)}\wV^{(1)^*})\wV^{(2)}(I_{E_2}\ot (I-\wV^{(1)}\wV^{(1)^*}))\eta_2= P_{\mathcal{W}_1}\wV^{(2)}|_{E_2\ot {\mathcal{W}_1}}\eta_2=\widetilde{F}_2(\eta_2).
	\end{align*} Therefore $(R(\wV^{(1)})+R(\wV^{(2)})) \ominus R(\wV^{(1)})\subseteq R(\widetilde{F}^{(2)}).$
\end{proof}

By the definition of Fringe operator, it is easy to see that
\begin{align}\label{K6}
	N(\widetilde{F}^{(2)})=\{\eta_2\in E_2\ot \mathcal{W}_1: \wV^{(2)}\eta_2 \in R(\wV^{(1)})\}.
\end{align}

From Lemma \ref{K4} and Equation $(\ref{K6}),$ define a bijection map from    $N(d_2) \ominus R(d_1)$ to $N(\widetilde{F}^{(2)})$ by $ (\eta,-\wV^{(2)^*}\wV^{(1)}\eta) \mapsto \wV^{(2)^*}\wV^{(1)}\eta$ and thus  dim$(N(\widetilde{F}^{(2)}))$=dim$(N(d_2) \ominus R(d_1)).$ This gives the following theorem which is a generalization of \cite[Theorem 1.3]{Y03}.
\begin{theorem}\label{AAA}
	Suppose that $(\sigma , V^{(1)},V^{(2)})$ is an isometric covariant representation of $\mathbb{E}$ on  $\mathcal{H}.$ Then $(\sigma_2,F^{(2)})$ is Fredholm if and only if $(\sigma , V^{(1)},V^{(2)})$ is Fredholm. Moreover, $$ind(\sigma_2,F^{(2)})=ind(\sigma , V^{(1)},V^{(2)}).$$
\end{theorem}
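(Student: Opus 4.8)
The plan is to read the statement off from the structural results that precede it, so the proof becomes a bookkeeping exercise aligning the three pieces of data that enter each Fredholm condition. First I would record the simplifications available in the isometric case. Since $\widetilde{V}^{(2)}$ is an isometry, $I_{E_1}\ot \widetilde{V}^{(2)}$ is an isometry, so $d_1(\eta)=0$ forces $\eta=0$, giving $N(d_1)=\{0\}$. Moreover a direct norm computation gives $\|d_1\eta\|^2 = \|(I_{E_1}\ot\widetilde{V}^{(2)})\eta\|^2 + \|(I_{E_2}\ot\widetilde{V}^{(1)})(t_{1,2}\ot I_{\mathcal{H}})\eta\|^2 = 2\|\eta\|^2$, so $d_1$ is bounded below and hence $R(d_1)$ is automatically closed. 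Consequently the Fredholm requirement for $(\sigma,V^{(1)},V^{(2)})$ collapses to the single demand that $R(d_2)$ be closed together with $\dim[N(d_2)\ominus R(d_1)]+\dim[\mathcal{H}\ominus R(d_2)]<\infty$, and the index reduces to $\dim[N(d_2)\ominus R(d_1)]-\dim[\mathcal{H}\ominus R(d_2)]$.

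Next I would match each surviving quantity with the corresponding datum of the Fringe operator $\widetilde{F}^{(2)}$. Directly from the definition of $d_2$ one has $R(d_2)=R(\widetilde{V}^{(1)})+R(\widetilde{V}^{(2)})$, so Lemma \ref{K5} yields at once that $R(d_2)$ is closed if and only if $R(\widetilde{F}^{(2)})$ is closed, and also that $N(\widetilde{F}^{(2)^*})=\mathcal{H}\ominus(R(\widetilde{V}^{(1)})+R(\widetilde{V}^{(2)}))=\mathcal{H}\ominus R(d_2)$, whence $\dim N(\widetilde{F}^{(2)^*})=\dim[\mathcal{H}\ominus R(d_2)]$. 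For the kernel, the bijection constructed just before the theorem out of Lemma \ref{K4} and Equation (\ref{K6}), namely $(\eta,-\widetilde{V}^{(2)^*}\widetilde{V}^{(1)}\eta)\mapsto \widetilde{V}^{(2)^*}\widetilde{V}^{(1)}\eta$, identifies $N(d_2)\ominus R(d_1)$ with $N(\widetilde{F}^{(2)})$, giving $\dim N(\widetilde{F}^{(2)})=\dim[N(d_2)\ominus R(d_1)]$.

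With these three identifications in hand the conclusion is immediate. By definition $(\sigma_2,F^{(2)})$ is Fredholm precisely when $R(\widetilde{F}^{(2)})$ is closed and both $N(\widetilde{F}^{(2)})$ and $N(\widetilde{F}^{(2)^*})$ are finite-dimensional; by the three equivalences above this holds exactly when $R(d_2)$ is closed and $\dim[N(d_2)\ominus R(d_1)]$, $\dim[\mathcal{H}\ominus R(d_2)]$ are both finite, i.e. exactly when $(\sigma,V^{(1)},V^{(2)})$ is Fredholm. Finally, using $N(d_1)=\{0\}$,
$$ind(\sigma_2,F^{(2)})=\dim N(\widetilde{F}^{(2)})-\dim N(\widetilde{F}^{(2)^*})=\dim[N(d_2)\ominus R(d_1)]-\dim[\mathcal{H}\ominus R(d_2)]=ind(\sigma,V^{(1)},V^{(2)}).$$

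I expect the only genuinely non-routine point to be the reduction in the first paragraph, specifically verifying that $R(d_1)$ is closed (and $N(d_1)=\{0\}$) so that closedness of the two sequence maps reduces to closedness of $R(d_2)$ alone; once this is settled, the remainder is a faithful transcription of Lemmas \ref{K4} and \ref{K5} and the bijection established immediately before the statement.
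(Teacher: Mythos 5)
Your proof is correct and follows essentially the same route as the paper, whose argument for Theorem \ref{AAA} is precisely the combination of Lemma \ref{K4}, Lemma \ref{K5}, Equation (\ref{K6}), the kernel bijection $(\eta,-\wV^{(2)^*}\wV^{(1)}\eta)\mapsto \wV^{(2)^*}\wV^{(1)}\eta$, and the observation $N(d_1)=\{0\}$. Your explicit check that $\|d_1\eta\|^2=2\|\eta\|^2$, so that $R(d_1)$ is automatically closed, is a small point the paper leaves implicit, but it does not change the approach.
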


For the  joint defect operator $C$   of  $(\sigma , V^{(1)},V^{(2)})$   and by  using Theorem \ref{H5},  the map $\eta_2 \mapsto \wV^{(2)}\eta_2$   from $N(\widetilde{F}^{(2)})$ to $\mathcal{E}_{-1}(C)$ is well defined bijection. The following theorem follows from the above, and using Theorems \ref{H8} and  \ref{H5}.

\begin{theorem}\label{EEE}
	Let  $(\sigma , V^{(1)},V^{(2)})$  be an isometric covariant representation of $\mathbb{E}$ on  $\mathcal{H}.$ If  the joint defect operator $C$ is compact, then  $(\sigma_2,F^{(2)})$ is Fredholm and  $$ind(\sigma_2,F^{(2)})= dim \mathcal{E}_{-1}(C) - dim \mathcal{E}_{1}(C).$$
\end{theorem}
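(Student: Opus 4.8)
The plan is to read off both assertions from the operator-theoretic properties of the single Fringe operator $\widetilde{F}^{(2)}\colon E_2\ot\mathcal{W}_1\to\mathcal{W}_1$, since $(\sigma_2,F^{(2)})$ is Fredholm precisely when $\widetilde{F}^{(2)}$ is a Fredholm operator, and in that case $ind(\sigma_2,F^{(2)})=\dim N(\widetilde{F}^{(2)})-\dim N(\widetilde{F}^{(2)^*})$. Thus the whole theorem reduces to (i) showing $\widetilde{F}^{(2)}$ is Fredholm, and (ii) identifying its kernel and cokernel with the eigenspaces $\mathcal{E}_{-1}(C)$ and $\mathcal{E}_{1}(C)$ respectively.

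For (i), I would invoke Theorem \ref{H8}: since $C$ is compact, both $I_{\mathcal{W}_1}-\widetilde{F}^{(2)}\widetilde{F}^{(2)^*}$ and $I_{E_2\ot\mathcal{W}_1}-\widetilde{F}^{(2)^*}\widetilde{F}^{(2)}$ are compact. Writing $\pi$ for the quotient map onto the Calkin algebra, these two relations say $\pi(\widetilde{F}^{(2)^*})\pi(\widetilde{F}^{(2)})=\mathbf{1}$ and $\pi(\widetilde{F}^{(2)})\pi(\widetilde{F}^{(2)^*})=\mathbf{1}$, so $\pi(\widetilde{F}^{(2)})$ is invertible. By Atkinson's theorem $\widetilde{F}^{(2)}$ is then Fredholm; in particular its range is closed and both $N(\widetilde{F}^{(2)})$ and $N(\widetilde{F}^{(2)^*})$ are finite-dimensional, which is exactly the assertion that $(\sigma_2,F^{(2)})$ is Fredholm. (Concretely, compactness of $I_{E_2\ot\mathcal{W}_1}-\widetilde{F}^{(2)^*}\widetilde{F}^{(2)}$ forces $N(\widetilde{F}^{(2)})$ to be finite-dimensional and the range to be closed, and compactness of $I_{\mathcal{W}_1}-\widetilde{F}^{(2)}\widetilde{F}^{(2)^*}$ does the same for $N(\widetilde{F}^{(2)^*})$.)

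For (ii), the identification of the kernel is already in place: the bijection $\eta_2\mapsto\widetilde{V}^{(2)}\eta_2$ from $N(\widetilde{F}^{(2)})$ onto $\mathcal{E}_{-1}(C)$ recorded immediately before the statement, which is built from Equation (\ref{K6}) together with the description of $\mathcal{E}_{-1}(C)$ in Theorem \ref{H5}, gives $\dim N(\widetilde{F}^{(2)})=\dim\mathcal{E}_{-1}(C)$. For the cokernel I would combine Lemma \ref{K5}, which yields $N(\widetilde{F}^{(2)^*})=\mathcal{H}\ominus(R(\widetilde{V}^{(1)})+R(\widetilde{V}^{(2)}))$, with the description $\mathcal{E}_{1}(C)=\mathcal{H}\ominus(\widetilde{V}^{(1)}(E_1\ot\mathcal{H})+\widetilde{V}^{(2)}(E_2\ot\mathcal{H}))$ of Theorem \ref{H5}. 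Since $R(\widetilde{V}^{(i)})=\widetilde{V}^{(i)}(E_i\ot\mathcal{H})$, the two orthogonal complements coincide, so $N(\widetilde{F}^{(2)^*})=\mathcal{E}_{1}(C)$ and hence $\dim N(\widetilde{F}^{(2)^*})=\dim\mathcal{E}_{1}(C)$. Substituting both dimensions into $ind(\sigma_2,F^{(2)})=\dim N(\widetilde{F}^{(2)})-\dim N(\widetilde{F}^{(2)^*})$ yields $ind(\sigma_2,F^{(2)})=\dim\mathcal{E}_{-1}(C)-\dim\mathcal{E}_{1}(C)$, as claimed.

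I expect step (i) — the passage from compactness of $C$ to genuine Fredholmness, meaning closed range and finite-dimensional kernel and cokernel — to be the only place requiring conceptual care, and it is dispatched cleanly by Theorem \ref{H8} together with the Calkin-algebra (Atkinson) characterization; everything else is a matter of matching the kernel and cokernel descriptions already established against the eigenspace descriptions of Theorem \ref{H5}. The one routine point to verify is that the two identifications are compatible with the sign convention $ind=\dim N(\cdot)-\dim N(\cdot^{*})$, but no new estimate is needed beyond the cited results.
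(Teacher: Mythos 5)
Your proposal is correct and follows essentially the same route as the paper, whose one-sentence proof invokes exactly the ingredients you spell out: Theorem \ref{H8} (compactness of $I-\widetilde{F}^{(2)}\widetilde{F}^{(2)^*}$ and $I-\widetilde{F}^{(2)^*}\widetilde{F}^{(2)}$, hence Fredholmness of $\widetilde{F}^{(2)}$), the bijection $\eta_2\mapsto\widetilde{V}^{(2)}\eta_2$ from $N(\widetilde{F}^{(2)})$ onto $\mathcal{E}_{-1}(C)$ stated just before the theorem, and the identification of the cokernel with $\mathcal{E}_{1}(C)$ via Lemma \ref{K5} and Theorem \ref{H5}. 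Your Calkin-algebra/Atkinson justification merely makes explicit a step the paper leaves implicit, so there is nothing to fault.
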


Let $\mathcal{K}$ be an invariant subspace of an isometric covariant representation $(\sigma , V^{(1)},V^{(2)})$ of $\mathbb{E}$ on $\mathcal{H}.$ We  denote the joint defect operator $C(\sigma,V^{(1)}, V^{(2)})$ restricted to  $\mathcal{K}$ by $C^{\mathcal{K}}(\sigma',V^{(1)}, V^{(2)}),$ where $\sigma'=\sigma|_{\mathcal{K}}.$ That is,
$$C^{\mathcal{K}}(\sigma',V^{(1)}, V^{(2)})=I-\wV^{(1)}(\wV^{(1)}|_{E_1\ot\mathcal{K}})^*-\wV^{(2)}(\wV^{(2)}|_{E_2\ot\mathcal{K}})^*-\wV(\wV|_{E\ot\mathcal{K}})^*.$$

\begin{definition}
	Let $(\sigma , V^{(1)},V^{(2)})$ be an isometric covariant representation of $\mathbb{E}$ on  $\mathcal{H}.$  The two invariant subspaces ${\mathcal{K}_1}$ and ${\mathcal{K}_2}$ of  $(\sigma , V^{(1)},V^{(2)})$ are said to be {\rm congruent} if there exists an invertible module map $L:{\mathcal{K}_1}\to {\mathcal{K}_2}$  (i.e., $L(ah)=aL(h)$ and $L$ is invertible, $a \in \mathcal{A}, h \in \mathcal{K}$) such that $$C^{\mathcal{K}_2}=LC^{\mathcal{K}_1}L^*.$$
\end{definition}

Suppose that the joint defect operator $C$  is finite rank, then it can be written as a symmetric invertible matrix (see Theorem \ref{A1}) on ${N(C)^{\perp}}$. An invertible  symmetric matrix $T$
is said to have \emph{signature} $(p, q)$ if there is an invertible matrix $B$ such that $BTB^*= \begin{pmatrix}
	I_{p} & 0  \\
	0 & - I_{q} \\
\end{pmatrix}.$

The following theorem is a generalization of \cite[Proposition 3.1]{Y05}.
\begin{theorem}
	Let $\mathcal{K}_1$ and $\mathcal{K}_2$   be   $(\sigma , V^{(1)},V^{(2)})$-invariant subspaces of  $\mathcal{H}$  such that the dim$(\mathcal{K}_1)$ =dim$(\mathcal{K}_2)$ and let $C^{\mathcal{K}_1}$ has finite rank. Then  $\mathcal{K}_2$    is congruent to $\mathcal{K}_1$ if and only if both $C^{\mathcal{K}_1}$ and $C^{\mathcal{K}_2}$ have the same signature.
\end{theorem}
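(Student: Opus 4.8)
The plan is to read the relation $C^{\mathcal{K}_2} = L\,C^{\mathcal{K}_1}L^*$ as a statement of $*$-congruence of self-adjoint operators and to invoke Sylvester's law of inertia, while carrying the module structure along throughout. First I would record that each $C^{\mathcal{K}_i}$ is a self-adjoint module map: since the wandering subspaces $\mathcal{W},\mathcal{W}_1,\mathcal{W}_2$ are $\sigma$-reducing, the projections $P_{\mathcal{W}},P_{\mathcal{W}_1},P_{\mathcal{W}_2}$ lie in the commutant of $\sigma(\mathcal{A})$, and by Equation $(\ref{L3})$ (restricted to $\mathcal{K}_i$) so does $C^{\mathcal{K}_i}$. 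Consequently the spectral projections of $C^{\mathcal{K}_i}$, its absolute value $|C^{\mathcal{K}_i}|$, and its sign $J_i=\mathrm{sign}(C^{\mathcal{K}_i})$ are all module maps, so that the positive and negative spectral subspaces $\mathcal{K}_i^{+},\mathcal{K}_i^{-}$ and the kernel $N(C^{\mathcal{K}_i})$ are $\sigma$-reducing submodules of $\mathcal{K}_i$. Because $C^{\mathcal{K}_1}$ has finite rank, the same will hold for $C^{\mathcal{K}_2}$ in the forward direction.

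For the forward direction, suppose $\mathcal{K}_2$ is congruent to $\mathcal{K}_1$, so $C^{\mathcal{K}_2}=L\,C^{\mathcal{K}_1}L^*$ for an invertible module map $L$. Since $L$ is invertible, the substitution $y=L^*x$ is a bijection and $\langle C^{\mathcal{K}_2}x,x\rangle=\langle C^{\mathcal{K}_1}L^*x,\,L^*x\rangle$; hence the maximal dimension of a subspace on which the quadratic form of $C^{\mathcal{K}_2}$ is positive definite equals that for $C^{\mathcal{K}_1}$, and likewise for the negative-definite part. This is precisely the assertion that congruence preserves inertia, so $C^{\mathcal{K}_1}$ and $C^{\mathcal{K}_2}$ share the signature $(p,q)$, and in particular the common rank $p+q$.

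For the converse, assume both have signature $(p,q)$. On the supports I would build $L$ by the square-root recipe: set $L=|C^{\mathcal{K}_2}|^{1/2}\,W\,|C^{\mathcal{K}_1}|^{-1/2}$ on $N(C^{\mathcal{K}_1})^{\perp}$, where $W:N(C^{\mathcal{K}_1})^{\perp}\to N(C^{\mathcal{K}_2})^{\perp}$ is a module unitary carrying $\mathcal{K}_1^{+}$ onto $\mathcal{K}_2^{+}$ and $\mathcal{K}_1^{-}$ onto $\mathcal{K}_2^{-}$, and extend by an invertible module map $L_0:N(C^{\mathcal{K}_1})\to N(C^{\mathcal{K}_2})$ on the kernels. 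Writing $C^{\mathcal{K}_i}=|C^{\mathcal{K}_i}|^{1/2}J_i|C^{\mathcal{K}_i}|^{1/2}$ and using $W J_1 W^*=J_2$, which holds once $W$ matches the $\pm$ spectral subspaces, a direct computation gives $L\,C^{\mathcal{K}_1}L^*=|C^{\mathcal{K}_2}|^{1/2}J_2|C^{\mathcal{K}_2}|^{1/2}=C^{\mathcal{K}_2}$, while $L$ is invertible and intertwines $\sigma$ by construction, so $\mathcal{K}_2$ is congruent to $\mathcal{K}_1$.

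The main obstacle is the existence of the module unitary $W$ (and of $L_0$): equality of signatures only forces $\dim\mathcal{K}_1^{\pm}=\dim\mathcal{K}_2^{\pm}$ and $\dim N(C^{\mathcal{K}_1})=\dim N(C^{\mathcal{K}_2})$ at the level of Hilbert spaces, whereas $W$ must be a unitary equivalence of these subspaces \emph{as $\sigma$-representations}. I would close this gap by upgrading the numerical data to a $\sigma$-equivariant isomorphism: matching the $\pm$ spectral subspaces by exploiting the eigenspace pairing $\dim\mathcal{E}_{\lambda}(C)=\dim\mathcal{E}_{-\lambda}(C)$ proved earlier, whose intertwiners $\wV^{(i)*}$ are themselves module maps, together with the normal form of Theorem $\ref{A1}$, and matching the kernels through the ambient module structure inherited from $\dim\mathcal{K}_1=\dim\mathcal{K}_2$. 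This module-theoretic refinement of Sylvester's law is the step requiring genuine care, since the passage from the scalar inertia invariant to a $\sigma$-intertwining unitary is exactly where the general $C^*$-algebraic setting departs from the classical Hardy-space argument of \cite{Y05}.
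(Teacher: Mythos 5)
Your overall architecture is the same as the paper's: split $\mathcal{K}_i = N(C^{\mathcal{K}_i}) \oplus N(C^{\mathcal{K}_i})^{\perp}$, prove the forward implication by showing congruence preserves inertia, and prove the converse by building $L$ blockwise as a map on the complements direct-summed with a map on the kernels. Your forward argument (maximal dimension of positive-definite subspaces under the substitution $y = L^*x$) is the standard Sylvester argument and is equivalent to the paper's observation that $L^*$ carries $N(C^{\mathcal{K}_2})$ onto $N(C^{\mathcal{K}_1})$, so that the congruence restricts to the complements. Your converse differs in mechanics: the paper takes $L_1 = B_2^{-1}B_1$, where $B_i$ are the invertible matrices bringing each $C^{\mathcal{K}_i}$ restricted to the complement of its kernel to the common normal form $I_p \oplus (-I_q)$, whereas you use the formula $L_1 = |C^{\mathcal{K}_2}|^{1/2}\, W\, |C^{\mathcal{K}_1}|^{-1/2}$; both are correct at the level of finite matrices, and your computation $L C^{\mathcal{K}_1} L^* = C^{\mathcal{K}_2}$ granted $W J_1 W^* = J_2$ is fine.

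The problem is the step you yourself flag as the crux: producing $W$ and $L_0$ as \emph{module} maps. Your proposed repair does not work. The eigenspace-pairing theorem ($\dim \mathcal{E}_{\lambda}(C) = \dim \mathcal{E}_{-\lambda}(C)$) is internal to a single joint defect operator --- the intertwiner $\wV^{(1)^*}$ relates $\mathcal{E}_{\lambda}$ and $\mathcal{E}_{-\lambda}$ of the \emph{same} $C$ --- so it gives no way to match the positive (or negative) spectral subspace of $C^{\mathcal{K}_1}$ with that of $C^{\mathcal{K}_2}$, which live on two different subspaces. Likewise, equality of kernel dimensions does not produce a $\sigma$-equivariant invertible map between the kernels: an invertible module map $L_0$ forces $\sigma|_{N(C^{\mathcal{K}_1})}$ and $\sigma|_{N(C^{\mathcal{K}_2})}$ to be similar representations of $\mathcal{A}$, and for non-trivial $\mathcal{A}$ two equidimensional $\sigma$-invariant subspaces can carry inequivalent representations (e.g.\ $\mathcal{A} = \mathbb{C}\oplus\mathbb{C}$ acting by the first coordinate on one and the second on the other), in which case no invertible module map exists at all. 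So as written your converse is incomplete. You should know, however, that the paper's own proof has exactly the same lacuna: it simply asserts ``there exists an invertible module map $L_1$\dots'' and ``choose $L_2$ to be any invertible module map\dots'' with no justification that module maps, rather than mere invertible matrices, implementing these identifications exist. In short, you have correctly isolated the one point where the $C^*$-correspondence setting genuinely departs from Yang's scalar case ($\mathcal{A} = \mathbb{C}$, where every linear map is a module map and both proofs close); neither your proposed fix nor the paper actually supplies the missing equivariance.
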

\begin{proof}
	We can write  the subspaces ${\mathcal{K}_1}$ and ${\mathcal{K}_2}$ as $$\mathcal{K}_1=N (C^{\mathcal{K}_1})\oplus N(C^{\mathcal{K}_1})^{\perp}\quad and \quad \mathcal{K}_2= N(C^{\mathcal{K}_2})\oplus N (C^{\mathcal{K}_2})^{\perp}.$$ Suppose that $\mathcal{K}_1$  is congruent to $\mathcal{K}_2,$ then there exists an invertible module map $L:{\mathcal{K}_1}\to {\mathcal{K}_2}$ such that $C^{\mathcal{K}_2}=LC^{\mathcal{K}_1}L^*.$ It is easy to verify that $L^*$ maps $N(C^{\mathcal{K}_2})$ onto $N(C^{\mathcal{K}_1})$ and  $C^{\mathcal{K}_2}=LC^{\mathcal{K}_1}L^*$ on $ N(C^{\mathcal{K}_2})^{\perp}.$ Therefore  both $C^{\mathcal{K}_1}$ and $C^{\mathcal{K}_2}$ have the same signature. 
	
	Conversely, let  $C^{\mathcal{K}_1}$ and $C^{\mathcal{K}_2}$ have the same signature. That means, the joint defect operators restricted to the orthogonal complement of their kernels have the same signature. Then there exists an invertible module map $L_1$ from $N(C^{\mathcal{K}_1})^{\perp}$ onto $ N(C^{\mathcal{K}_2})^{\perp}$ such that $C^{\mathcal{K}_2}|_{N(C^{\mathcal{K}_2})^{\perp}}=L_1C^{\mathcal{K}_1}L_1^*.$ Since  dim$(N(C^{\mathcal{K}_1})^{\perp})$=dim$ (N(C^{\mathcal{K}_2})^{\perp}) < \infty,$  choose
	$L_2$  to  be any  invertible module map from $N(C^{\mathcal{K}_1})$ onto $N(C^{\mathcal{K}_2}).$  Consider  $L=L_1\oplus L_2,$ we get $C^{\mathcal{K}_2}=LC^{\mathcal{K}_1}L^*.$
\end{proof}

\begin{corollary}\label{CRR}
	Let $(\sigma , V^{(1)},V^{(2)})$ and $(\sigma' , V^{(1)'},V^{(2)'})$ be an isometric covariant representations of $\mathbb{E}$ on the Hilbert spaces  $\mathcal{H}$ and $\mathcal{H}',$ respectively such that  dim$(\mathcal{H})$=dim$(\mathcal{H}')$ and let $C(\sigma,V^{(1)},V^{(2)})$ has finite rank. Then $(\sigma,V^{(1)},V^{(2)})$ is congruent to $(\sigma' , V^{(1)'},V^{(2)'})$ (that is, there is an invertible module map $L:\mathcal{H}\to \mathcal{H}'$ such that $C(\sigma' , V^{(1)'},V^{(2)'})=LC(\sigma,V^{(1)},V^{(2)})L^*$ and $L\sigma(a)=\sigma'(a)L$ for all $a\in\mathcal{A}$) if and only if both $C(\sigma,V^{(1)},V^{(2)})$ and $C(\sigma' , V^{(1)'},V^{(2)'})$ have the same signature.
\end{corollary}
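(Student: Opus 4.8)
The plan is to transport the argument of the preceding theorem, which compares two invariant subspaces of one representation, to the present situation of two representations on $\mathcal{H}$ and $\mathcal{H}'$, with $C:=C(\sigma,V^{(1)},V^{(2)})$ and $C':=C(\sigma',V^{(1)'},V^{(2)'})$ in place of $C^{\mathcal{K}_1}$ and $C^{\mathcal{K}_2}$. The first step I would carry out is to record that $C$ lies in the commutant of $\sigma(\mathcal{A})$. Indeed, Lemma \ref{MSC} gives $\sigma(a)\wV^{(i)}=\wV^{(i)}(\phi_i(a)\ot I)$, and taking adjoints (and replacing $a$ by $a^*$) yields $\wV^{(i)^*}\sigma(a)=(\phi_i(a)\ot I)\wV^{(i)^*}$; hence each of $\wV^{(1)}\wV^{(1)^*}$, $\wV^{(2)}\wV^{(2)^*}$ and $\wV\wV^*$ commutes with $\sigma(a)$, so $C\sigma(a)=\sigma(a)C$ for all $a\in\mathcal{A}$, and likewise $C'\sigma'(a)=\sigma'(a)C'$. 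Therefore $N(C)$, $N(C)^{\perp}$ and each eigenspace $\mathcal{E}_\lambda(C)$ are $\sigma$-reducing, and analogously for $C'$; this is the feature that will allow the isomorphisms built below to be taken as module maps.

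For the forward implication I would assume $(\sigma,V^{(1)},V^{(2)})$ is congruent to $(\sigma',V^{(1)'},V^{(2)'})$, so that there is an invertible $L:\mathcal{H}\to\mathcal{H}'$ with $L\sigma(a)=\sigma'(a)L$ and $C'=LCL^*$. Since $L$ is invertible one checks that $L^*$ carries $N(C')$ onto $N(C)$ and $L$ carries $N(C)^{\perp}$ onto $N(C')^{\perp}$, so the relation $C'=LCL^*$ restricts to a congruence of $C|_{N(C)^{\perp}}$ onto $C'|_{N(C')^{\perp}}$ by an invertible operator. By Sylvester's law of inertia the positive and negative inertiae are congruence invariants, so $C$ and $C'$ have the same signature.

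For the converse I would assume $C$ and $C'$ share the signature $(p,q)$. Then $\operatorname{rank}(C)=p+q=\operatorname{rank}(C')$, so $\dim N(C)^{\perp}=\dim N(C')^{\perp}<\infty$, and the hypothesis $\dim(\mathcal{H})=\dim(\mathcal{H}')$ forces $\dim N(C)=\dim N(C')$. Equality of signatures supplies an invertible module map $L_1:N(C)^{\perp}\to N(C')^{\perp}$ with $C'|_{N(C')^{\perp}}=L_1\big(C|_{N(C)^{\perp}}\big)L_1^*$, and on the kernels I would choose any invertible module map $L_2:N(C)\to N(C')$. Putting $L=L_1\oplus L_2$ and using $C|_{N(C)}=0=C'|_{N(C')}$ gives $C'=LCL^*$ together with $L\sigma(a)=\sigma'(a)L$, which is precisely the congruence of the two representations.

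The hard part will be the construction of $L_1$ and $L_2$ as \emph{module} maps rather than as bare linear isomorphisms matching the sign pattern: agreement of $(p,q)$ only pins down the numbers of positive and negative eigenvalues, whereas congruence must simultaneously intertwine $\sigma$ and $\sigma'$. This is exactly where the $\sigma$-reducibility of the eigenspaces recorded in the first step is indispensable, since it lets the diagonalization be performed inside the category of $\mathcal{A}$-modules; the finite-rank hypothesis keeps $N(C)^{\perp}$ finite-dimensional, so that the normalization of signs on the nonzero eigenspaces and the matching of the module structures can be reconciled eigenspace by eigenspace.
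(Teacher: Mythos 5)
Your proposal is correct and takes essentially the same route as the paper, which treats this corollary as following by the identical argument used for the preceding theorem on invariant subspaces: split $\mathcal{H}$ and $\mathcal{H}'$ as $N(C)\oplus N(C)^{\perp}$ and $N(C')\oplus N(C')^{\perp}$, deduce the forward direction from invertibility of $L$ (inertia), and assemble $L=L_1\oplus L_2$ for the converse. Your preliminary observation that $C$ commutes with $\sigma(\mathcal{A})$, so that its eigenspaces are $\sigma$-reducing, is a useful detail the paper leaves implicit; like the paper, you assert rather than fully construct the intertwining maps $L_1,L_2$ from signature equality alone, so the two arguments share the same structure and the same level of rigor.
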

The following corollary follows from the above corollary and Theorems \ref{AAA} and \ref{EEE}.
\begin{corollary}
	Let $(\sigma , V^{(1)},V^{(2)})$ and $(\sigma' , V^{(1)'},V^{(2)'})$ be an isometric covariant representations of $\mathbb{E}$ on  $\mathcal{H}$ and $\mathcal{H}',$ respectively. Suppose that dim$(\mathcal{H})$=dim$(\mathcal{H}')$ and both $C(\sigma,V^{(1)},V^{(2)})$ and $C(\sigma' , V^{(1)'},V^{(2)'})$ are finite rank and have the same signature. Then $$ind(\sigma' , V^{(1)'},V^{(2)'})=ind(\sigma , V^{(1)},V^{(2)}).$$
\end{corollary}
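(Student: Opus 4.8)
The plan is to show that, for an isometric covariant representation whose joint defect operator is finite rank, the Fredholm index is completely determined by the signature of that defect operator; the desired equality is then immediate. Since $C := C(\sigma, V^{(1)}, V^{(2)})$ and $C' := C(\sigma', V^{(1)'}, V^{(2)'})$ are finite rank, they are in particular compact, so Theorem \ref{EEE} applies to each and yields that the associated Fringe representations are Fredholm with $ind(\sigma_2, F^{(2)}) = \dim \mathcal{E}_{-1}(C) - \dim \mathcal{E}_1(C)$ and $ind(\sigma'_2, F^{(2)'}) = \dim \mathcal{E}_{-1}(C') - \dim \mathcal{E}_1(C')$. Feeding these into Theorem \ref{AAA}, which identifies the index of the Fringe representation with that of the full representation, I obtain
$$ind(\sigma, V^{(1)}, V^{(2)}) = \dim \mathcal{E}_{-1}(C) - \dim \mathcal{E}_1(C), \qquad ind(\sigma', V^{(1)'}, V^{(2)'}) = \dim \mathcal{E}_{-1}(C') - \dim \mathcal{E}_1(C').$$
Thus the whole problem is reduced to expressing $\dim \mathcal{E}_{-1}(C) - \dim \mathcal{E}_1(C)$ through the \emph{signature}.

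For this reduction I would use the canonical form of $C|_{N(C)^{\perp}}$ supplied by Theorem \ref{A1}, namely $I_{d_1} \oplus B \oplus (-I_{d_2}) \oplus (-B)$ with $d_1 = \dim \mathcal{E}_1(C)$, $d_2 = \dim \mathcal{E}_{-1}(C)$ and $B = \bigoplus_{0 < \lambda_i < 1} \lambda_i P_i$. The blocks $B$ and $-B$ have equal rank: this is exactly where the earlier symmetry $\dim \mathcal{E}_{\lambda}(C) = \dim \mathcal{E}_{-\lambda}(C)$ for every nonzero $\lambda \in (-1,1)$ enters, and it is already reflected in the form $-B$ of the fourth block. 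Hence $B$ and $-B$ contribute the same amount, say $M$, to the positive count $p$ and to the negative count $q$ respectively, so that reading off the signature $(p,q)$ of this invertible self-adjoint operator gives $p = d_1 + M$ and $q = d_2 + M$. Consequently $p - q = d_1 - d_2 = \dim \mathcal{E}_1(C) - \dim \mathcal{E}_{-1}(C)$, i.e. $ind(\sigma, V^{(1)}, V^{(2)}) = q - p$, and likewise $ind(\sigma', V^{(1)'}, V^{(2)'}) = q' - p'$, where $(p,q)$ and $(p',q')$ are the signatures of $C$ and $C'$.

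Finally, the hypothesis that $C$ and $C'$ share the same signature gives $p = p'$ and $q = q'$, so $q - p = q' - p'$ and the two indices coincide. Here Corollary \ref{CRR} is the structural counterpart: under $\dim(\mathcal{H}) = \dim(\mathcal{H}')$ and finite rank of $C$, equality of signatures is precisely what renders $(\sigma, V^{(1)}, V^{(2)})$ and $(\sigma', V^{(1)'}, V^{(2)'})$ congruent, so the index equality can be read as a congruence invariant. The step I expect to require the most care is the signature-to-eigenspace translation of the middle paragraph: one must keep track that the signature is taken on $N(C)^{\perp}$, where $C$ is genuinely invertible, and verify that the paired interior eigenspaces $\mathcal{E}_{\pm \lambda_i}(C)$ cancel precisely in $p - q$, so that only the extreme eigenvalues $\pm 1$ — the ones feeding the index formula — survive.
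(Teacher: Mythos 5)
Your proposal is correct and takes essentially the same route as the paper, whose own proof is just a one-line citation of Corollary \ref{CRR} and Theorems \ref{AAA} and \ref{EEE}: your combination of Theorems \ref{EEE} and \ref{AAA} to write each index as $\dim\mathcal{E}_{-1}(C)-\dim\mathcal{E}_{1}(C)$, followed by the canonical form of Theorem \ref{A1} and the symmetry $\dim\mathcal{E}_{\lambda}(C)=\dim\mathcal{E}_{-\lambda}(C)$ to identify this difference with $q-p$ for the signature $(p,q)$, is exactly the detail the paper leaves implicit. You are also right that Corollary \ref{CRR} is not logically needed for the index equality itself; it only supplies the congruence interpretation.
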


\paragraph{$\mathbf{Acknowledgement}$} Dimple Saini is supported by a UGC fellowship (File No:16-6(DEC. 2018)/2019(NET/CSIR)). Harsh Trivedi is supported by MATRICS-SERB  
Research Grant, File No: MTR/2021/000286, by 
SERB, Department of Science \& Technology (DST), Government of India. Shankar Veerabathiran thanks ISI Bangalore for Visiting Scientist position. We acknowledge the DST-FIST program (Govt. of India) for providing financial support for setting up the computing lab facility under the scheme “Fund for Improvement of Science and Technology” (FIST - No. SR/FST/MS-I/2018/24).

\end{document}